\documentclass[12pt]{amsart}

\usepackage[a4paper,margin=1in]{geometry}
\usepackage{graphicx}
\usepackage{amsmath,amsthm,amssymb,amsfonts,mathrsfs,mathtools,dsfont}
\usepackage{xcolor}
\usepackage{enumitem}
\usepackage{microtype}
\usepackage[colorlinks=true,linkcolor=blue!60!black,citecolor=blue!60!black,urlcolor=blue!60!black]{hyperref}
\usepackage[nameinlink,capitalize,noabbrev]{cleveref}

\numberwithin{equation}{section}

\theoremstyle{plain}
\newtheorem{theorem}{Theorem}[section]
\newtheorem{lemma}[theorem]{Lemma}
\newtheorem{cor}[theorem]{Corollary}
\newtheorem{prop}[theorem]{Proposition}

\newtheorem{thmA}{Theorem}

\newtheorem{thmB}{Theorem}

\newtheorem{assumpA}{Assumption}

\newtheorem{corC}{Corollary}

\theoremstyle{definition}
\newtheorem{definition}[theorem]{Definition}
\newtheorem{example}[theorem]{Example}
\newtheorem{setting}[theorem]{Setting}

\theoremstyle{remark}

\newtheorem*{introremark}{Remark}

\newcommand{\psh}{\mathrm{PSH}}
\newcommand{\e}{\mathrm{e}}
\newcommand{\ddc}{dd^c}

\title{Regularity of Solutions to Monge--Amp\`ere Equations on Stein Spaces}
\author{Hongyu Chen}
\thanks{School of Mathematics, Sichuan University, Chengdu, China. Email: \texttt{hongyu.chern@gmail.com}.}

\begin{document}

\begin{abstract}
In this note, we study degenerate complex Monge--Amp\`ere equations on singular Stein spaces with right-hand sides depending on the unknown. First, we prove continuity up to the boundary for solutions. Next, under a holomorphic peak-set condition, we prove local H\"older estimates on the regular locus, allowing the boundary datum to fail to be H\"older continuous at the singular boundary. This includes the case of finite singular boundary sets.
\end{abstract}

\maketitle

\section*{Introduction}
The Dirichlet problem for complex Monge--Amp\`ere equations is classical in
the smooth setting. On smoothly bounded strongly pseudoconvex domains in
$\mathbb C^n$, Caffarelli--Kohn--Nirenberg--Spruck
\cite{CaffarelliKohnNirenbergSpruck1985} proved the existence of a unique
smooth strictly plurisubharmonic solution for smooth positive right-hand
sides, while Ko{\l}odziej~\cite{Kolodziej1998} obtained continuous solutions
for $L^p$ densities, for some $p>1$.

On singular Stein spaces, the corresponding theory is much less developed.
Guedj--Guenancia--Zeriahi~\cite{GuedjGuenanciaZeriahi2023} proved a general
continuity result for local Dirichlet problems. Motivated by their work, we
consider a local Dirichlet problem whose right-hand side depends on the unknown
function.

Let $X$ be a reduced, locally irreducible Stein space of dimension $n\ge 1$.
We denote by $X_{\rm reg}$ the regular locus of $X$ and by
$X_{\rm sing}:=X\setminus X_{\rm reg}$ its singular locus. Let
$\Omega\subset X$ be a relatively compact strongly pseudoconvex domain, and
fix a smooth positive volume form $dV_X$ on $X_{\rm reg}$, extended as a
measure by zero across $X_{\rm sing}$. Given
$\phi\in C^0(\partial\Omega)$, we consider the Dirichlet problem
\begin{equation}\label{main-eq-general}
\left\{
\begin{aligned}
(\ddc u)^n &= F(u,x)dV_X \qquad &&\text{in } \Omega,\\
u|_{\partial\Omega} &= \phi \qquad &&\text{on } \partial\Omega.
\end{aligned}
\right.
\end{equation}
The equation is understood in the sense of Bedford--Taylor
\cite{BedfordTaylor1976,BedfordTaylor1982}. We impose the following assumption
on $F$.

\begin{assumpA}\label{ass:F}
Let $M:=\max_{\partial\Omega}\phi.$
Assume that the function $F: ]-\infty,M]\times \Omega \to \mathbb{R}_{\ge 0}$
satisfies the following properties:
\begin{itemize}
    \item[(F1)] the map $(t,x)\mapsto F(t,x)$ is measurable;
    \item[(F2)] for each fixed $x\in\Omega$, the map
    $
    t\mapsto F(t,x)
    $
    is continuous and nondecreasing on $]-\infty,M]$;
    \item[(F3)] there exists a function $g\in L^p(\Omega, dV_X)$ for some $p>1$ such that
    $$
    0\le F(t,x)\le g(x)
    \qquad \text{for all } (t,x)\in ]-\infty,M]\times \Omega.
    $$
\end{itemize}
\end{assumpA}

Assuming that $F$ satisfies Assumption~\ref{ass:F}, our first result is the following.
\begin{thmA}\label{main thm}
    The Dirichlet problem \eqref{main-eq-general} admits a unique solution $u\in \psh(\Omega) \cap C^0(\overline{\Omega})$.
\end{thmA}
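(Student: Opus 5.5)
Uniqueness comes first, via the comparison principle. Suppose $u_1,u_2\in\psh(\Omega)\cap C^0(\overline\Omega)$ both solve \eqref{main-eq-general}. On the open set $\{u_1<u_2\}$, monotonicity (F2) gives $(\ddc u_1)^n=F(u_1,x)dV_X\le F(u_2,x)dV_X=(\ddc u_2)^n$. Since $u_1=u_2$ on $\partial\Omega$, the Bedford--Taylor comparison principle on $\{u_1<u_2\}$ forces this set to be empty. On a Stein space this principle holds by working on $X_{\rm reg}$, where $X_{\rm sing}$ is pluripolar and the measures put no mass there. Exchanging the roles of $u_1$ and $u_2$ gives $u_1=u_2$. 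The same argument yields a monotonicity statement that we will use below: if $F_1\le F_2$ pointwise and the boundary data satisfy $\phi_1\ge\phi_2$, then the corresponding solutions satisfy $u_1\ge u_2$.

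For existence I would use a fixed point argument built on the Guedj--Guenancia--Zeriahi theorem. That theorem solves $(\ddc v)^n=f\,dV_X$ with $v=\phi$ on $\partial\Omega$ and $v\in\psh\cap C^0(\overline\Omega)$, for any $0\le f\in L^p$ with $p>1$. It also gives a uniform bound and a uniform modulus of continuity that depend only on $\|f\|_{L^p}$ and on $\phi$. Define $T(w)$ to be the solution $v$ with density $f=F(w,x)$, for $w$ in the class $\mathcal K:=\{w\in C^0(\overline\Omega): w\le M\}$. Note that $T(w)\le M$ by the maximum principle. By (F3), $F(w,x)\le g$, so every $T(w)$ lies between $u_g$ and the maximal psh extension $u_0$ of $\phi$, where $u_g$ is the solution with density $g$. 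Moreover, all the $T(w)$ share one modulus of continuity. Hence $T(\mathcal K)$ is equicontinuous and bounded, so by Arzel\`a--Ascoli it is relatively compact in $C^0(\overline\Omega)$.

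There are two possible routes from here. Schauder's theorem on the closed convex hull requires continuity of $T$: if $w_j\to w$ uniformly, then $F(w_j,x)\to F(w,x)$ pointwise by (F2). Dominated convergence with the bound $g$ then gives convergence in $L^1$, and in $L^{p'}$ for every $p'<p$. Stability estimates of Ko{\l}odziej type, which are available through the GGZ framework, then give $T(w_j)\to T(w)$ uniformly. The monotone route avoids the stability estimate. Since $T$ is order-reversing, I would instead run the Perron scheme directly. Set $\mathcal S:=\{v\in\psh(\Omega)\cap C^0(\overline\Omega): v\le\phi \text{ on }\partial\Omega,\ (\ddc v)^n\ge F(v,x)dV_X\}$. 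It contains $u_g$, because $(\ddc u_g)^n=g\,dV_X\ge F(u_g,x)dV_X$. Take $u:=(\sup\mathcal S)^*$, which satisfies $u_g\le u\le u_0$. I would show $u$ is a supersolution by balayage on small balls in $X_{\rm reg}$, and that it is continuous by comparison with $T(u)$. In practice I would follow the Schauder route, since the continuity of $T$ is the cleaner statement.

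The main obstacle is the stability estimate: $\|T(w_1)-T(w_2)\|_\infty\le C\|F(w_1,\cdot)-F(w_2,\cdot)\|_{L^{1}}^{\gamma}$ for some $\gamma>0$, on a singular space. I would first try to deduce it from the GGZ capacity estimates, namely the inequality $\mathrm{Cap}\le$ volume to a power on $\Omega$ together with the standard Ko{\l}odziej argument, and I expect this to carry over verbatim from their paper. If it does not, I fall back on a weaker statement that still suffices. Any uniform limit point of $T(w_j)$ is psh and continuous, and by Bedford--Taylor continuity of Monge--Amp\`ere under uniform convergence it solves the equation with density $F(w,x)$. Uniqueness of that solution, proved in the first step, then identifies the limit as $T(w)$. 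Combined with the compactness above, this gives continuity of $T$ without any quantitative stability, and Schauder's theorem produces a fixed point $u=T(u)$, which is the solution.
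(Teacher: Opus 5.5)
Your uniqueness argument is fine, and the fixed-point scheme is a legitimate alternative to the paper's construction. As written, however, its load-bearing step is unsupported.

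Both the relative compactness of $T(\mathcal K)$ in $C^0(\overline\Omega)$ and your fallback proof that $T$ is continuous rest on one assertion: that the Guedj--Guenancia--Zeriahi theorem gives a modulus of continuity for the solution with density $f$ depending only on $\|f\|_{L^p}$ and $\phi$. Their Theorem~A, as quoted and used here, gives existence, uniqueness and continuity for each fixed density. It says nothing uniform in $f$, and you give no argument for such uniformity. On a singular space this uniformity is not automatic, because near $X_{\rm sing}$ there is no regularization by local translations. This is exactly why the paper's H\"older argument has to pass to a resolution and only controls $u$ on $X_{\rm reg}$. Without it, Arzel\`a--Ascoli is unavailable, Schauder's theorem cannot be invoked, and the subsequence argument for continuity of $T$ has no limit points to work with.

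The gap closes with the estimate the paper itself relies on: the $L^1$--$L^\infty$ stability of Proposition~\ref{prop:stability}, not the density-stability estimate you single out as the main obstacle. Take $u_j=T(w_j)$ and $u_k=T(w_k)$. Both lie in $C^0(\overline\Omega)$, both agree with $\phi$ on $\partial\Omega$, and both have densities bounded by $g$. Applying the estimate in both directions gives
\[
\sup_\Omega|u_j-u_k|\le C\,\|u_j-u_k\|_{L^1(\Omega)}^{\gamma},\qquad C=C\bigl(\gamma,\|g\|_{L^p(\Omega)}\bigr).
\]
Since $u_g\le u_j\le M$, the standard $L^1_{\rm loc}$-compactness of uniformly bounded psh functions on the manifold $\Omega\cap X_{\rm reg}$ applies. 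Together with the uniform bound and the fact that $X_{\rm sing}$ has measure zero, it produces a subsequence converging in $L^1(\Omega)$. By the display, this subsequence is uniformly Cauchy on $\overline\Omega$. That is the compactness you need; equicontinuity then follows a posteriori. With it, your fallback works: uniform limit points solve the equation with density $F(w,x)$ and boundary value $\phi$, hence equal $T(w)$ by uniqueness. This gives continuity of $T$, so Schauder's theorem applies on $\overline{\mathrm{conv}}\,T(\mathcal K)\subset\mathcal K$.

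Once repaired, your route genuinely differs from the paper's. The paper runs a Perron argument in four stages:
\begin{enumerate}
\item an explicit subsolution $A\rho+\phi$ for $C^2$ data and bounded $F$;
\item balayage on small balls in $X_{\rm reg}$, using Ko{\l}odziej's local solvability for $u$-dependent right-hand sides;
\item continuity via Forn\ae ss--Narasimhan approximation plus the stability estimate;
\item monotone approximation of $\phi$ and truncation $F_j=\min\{F,j\}$.
\end{enumerate}
This identifies $u$ as the upper envelope of subsolutions. In exchange, it relies throughout on comparison arguments such as Lemma~\ref{0-1}, and hence on the monotonicity of $F$ in $t$. Your argument treats the fixed-density theory as a black box and avoids balayage and Ko{\l}odziej's theorem. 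Its existence part uses only continuity of $t\mapsto F(t,x)$ and the bound $F\le g$; monotonicity is needed only for uniqueness.
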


\begin{introremark}
In view of {\rm(F2)}, condition {\rm(F3)} is equivalent to requiring that $
F(M,\cdot)\in L^p(\Omega, dV_X)$
for some $p>1$. 
\end{introremark}

This framework includes the classical prescribed density equation $F(u,x)=f(x)$ and the Aubin--Yau type equation $F(u,x)=\e^{\lambda u} f(x)$, where $f\in L^p(\Omega,dV_X)$, $p>1$, and $\lambda>0$,  as studied for instance in \cite{BoucksomEyssidieuxGuedjZeriahi2010,Berman2019}.

We next turn to local H\"older regularity. On bounded domains in $\mathbb C^n$, H\"older regularity for solutions with $L^p$ densities has been studied in
\cite{GuedjKolodziejZeriahi2008,Charabati2015,Charabati2017,
BaraccoKhanhPintonZampieri2016}. Analogous results on compact complex manifolds were obtained in
\cite{DDGPKZ14,KolodziejNguyen2018,LuPhungTo2021,DinhKolodziejNguyen2022}.
More recently, Cerqueira \cite{CerqueiraGoncalves2026} studied the modulus of continuity for complex Monge--Amp\`ere equations on Stein spaces with isolated interior singularities.

The purpose of the present paper is to study a boundary version of this question on singular Stein spaces. We allow
$S:=\partial\Omega\cap X_{\rm sing}$
to be nonempty and assume that the boundary datum is only locally H\"older continuous along $\partial\Omega\cap X_{\rm reg}$.

Here and throughout, $q$ denotes the conjugate exponent of $p$, that is,
$\frac{1}{p}+\frac{1}{q}=1$. 
Moreover, $\alpha$ denotes a H\"older exponent with $0<\alpha\le 1$. Fix a smooth positive
$(1,1)$-form $\beta$ on $X$, not necessarily closed. We denote by
$d_\beta(\cdot,\cdot)$ the distance induced by $\beta$. 

Recall that a
compact set $E\subset\partial\Omega$ is called a holomorphic peak set for
$\Omega$ if there exists a holomorphic function $P$ in a neighborhood of
$\overline\Omega$ such that
$P=1$ on $E$,
$|P|<1 $ on  $\overline\Omega\setminus E$.

\begin{thmB}
\label{0_thm:singular-boundary-holder}
Let $u$ be the solution given by Theorem~\ref{main thm}. Assume that
$\phi$ is locally $\alpha$-H\"older continuous on
$\partial\Omega\cap X_{\rm reg}$ and
$S$
is a holomorphic peak set for $\Omega$.  Assume moreover that the local $\alpha$-H\"older constants of
$\phi$ blow up at most polynomially near $S$: there exist
constants $C>0$, $\sigma\ge0$, and $\varepsilon_0>0$ such that
$$
|\phi(\xi)-\phi(\eta)|
\le
C\,\max\{d_\beta(\xi,S),d_\beta(\eta,S)\}^{-\sigma}
d_\beta(\xi,\eta)^\alpha
$$
whenever $\xi,\eta\in\partial\Omega\cap X_{\rm reg}$ and $0<\max\{d_\beta(\xi,S),d_\beta(\eta,S)\}<\varepsilon_0$.
Then $u$ is locally $\alpha_*$-H\"older continuous on
$\Omega\cap X_{\rm reg}$ for every
$0<\alpha_*<
\min\left\{
\frac{1}{nq+1},
\frac{\alpha}{2(\sigma+1)}
\right\}
$.
\end{thmB}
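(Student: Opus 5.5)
We follow the classical Guedj--Ko{\l}odziej--Zeriahi scheme. Fix a compact $K\Subset\Omega\cap X_{\rm reg}$. The plan is to compare $u$ with local regularizations $u_\delta(x):=\sup_{|\zeta|\le\delta}u(x+\zeta)$, computed in local holomorphic charts. We then show $\sup_{K}(u_\delta-u)\le C\delta^{\alpha_*}$. The main input is the stability estimate of Ko{\l}odziej type for the Monge--Amp\`ere operator with $L^p$ densities, which gives the loss $\frac1{nq+1}$ (up to $\epsilon$). Since the regular part is only locally a manifold, the maximal-function construction cannot be done globally near $X_{\rm sing}$.

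Instead, we glue via the peak function. Let $P$ be the peak function for $S$, and set $\rho_S:=-\log|P|\ge 0$ on $\overline\Omega$. It vanishes exactly on $S$, it is pluriharmonic away from zeros of $P$, and $\log|P|$ is plurisubharmonic. For $\eta>0$ consider the region $\Omega_\eta:=\{|P|<1-\eta\}$, whose closure stays at positive $d_\beta$-distance from $S$. Since $P$ is holomorphic near $\overline\Omega$ and $|P|<1$ off $S$, a Łojasiewicz-type inequality should give
\[
\eta^{a}\lesssim d_\beta(\cdot,S)\lesssim \eta^{b}
\]
on $\partial\Omega_\eta\cap\partial\Omega$. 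We only need the direction showing that points with $|P|$ close to $1$ are near $S$. I would try to use the simplest substitute: on $\partial\Omega\cap\{|P|\le 1-\eta\}$ we have $d_\beta(\cdot,S)\ge c\eta$, because $P$ is Lipschitz and equals $1$ on $S$. By the hypothesis, $\phi$ is then $\alpha$-H\"older with constant $C\eta^{-\sigma}$ there.

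The key steps are as follows.
\begin{enumerate}
\item Using the boundary continuity from Theorem~\ref{main thm}, the strong pseudoconvexity of $\partial\Omega$ near the regular boundary, and the local H\"older bound for $\phi$, build H\"older barriers. This gives $|u(z)-\phi(\xi)|\le C\eta^{-\sigma}|z-\xi|^{\alpha/2}$ for $\xi\in\partial\Omega$ with $|P(\xi)|\le1-\eta$. The exponent halves because of the standard $\alpha\mapsto\alpha/2$ loss in Walsh-type barrier constructions on strongly pseudoconvex domains.
\item Globally, $u$ is bounded, and the solution is continuous. Modify the regularization near $S$ by $\tilde u_\delta:=\max\bigl(u_\delta-C\delta^{\gamma},\,u+A\log|P|\bigr)$-type gluing. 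The term $A\log|P|$, with $A\sim\delta^{\gamma}/\eta$, dominates near $S$. Thus $\tilde u_\delta$ coincides with $u+A\log|P|$ near $S$ and with $u_\delta-C\delta^\gamma$ on $\Omega_\eta$.
\item This yields a plurisubharmonic competitor below $\phi+O(\delta^\gamma)$ on $\partial\Omega$. Choosing $\eta=\delta^{\theta}$ and balancing the error $\eta^{-\sigma}\delta^{\alpha/2}$ against $\delta^{\gamma}$ and $A\eta$ forces $\gamma<\frac{\alpha}{2(\sigma+1)}$.
\item Apply the stability estimate, which needs $\int(u_\delta-u)\,dV_X\lesssim\delta$ via the Laplacian mass bound, to get $\sup(\tilde u_\delta-u)\le C\delta^{\min(\gamma,1/(nq+1)-\epsilon)}$. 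Restricting to $K$ gives the claim.
\end{enumerate}
The monotonicity (F2) lets us treat $F(u,x)\le g$ as a fixed $L^p$ density in the comparison, via the comparison principle for nondecreasing right-hand sides.

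The main obstacle is step 2. The regularization $u_\delta$ is only defined where charts of size $\delta$ exist, which degenerates near $X_{\rm sing}\cap\overline\Omega$, not only near $S$. For interior singular points I would use that the estimate is only asked on compact subsets of the regular part. We can therefore work on $\Omega'=\Omega\setminus\{\text{neighborhood of }X_{\rm sing}\cap\Omega\}$ and replace the maximal function by a partition-of-unity regularization on $X_{\rm reg}$. Near interior singularities we use the global $L^1$-type stability of Guedj--Guenancia--Zeriahi rather than sup-regularization, absorbing that region into the error. Making the exponents match $\frac{1}{nq+1}$ there is where I expect the real difficulty.
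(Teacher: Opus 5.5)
Your proposal has two genuine gaps: one in the regularization, exactly where you expect difficulty, and one in the peak-set gluing of Step~2.

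First, you never produce a plurisubharmonic competitor on $\Omega$. The sup-regularization $\sup_{|\zeta|\le\delta}u(x+\zeta)$ is psh only inside a single chart, and on $X_{\rm reg}$ it depends on the chart. Patching such regularizations of a merely continuous $u$ by a partition of unity destroys positivity in an uncontrolled way. Near points of $X_{\rm sing}\cap\Omega$ nothing is defined at all. ``Absorbing'' that region via the GGZ stability estimate is not an argument. Stability compares $u$ with a bounded psh function whose excess over $u$ is controlled at the boundary of its domain, and you have no control of $u_\delta-u$ near interior singularities.

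The missing idea is the paper's passage to a resolution $\pi:\widetilde X\to X$:
\begin{enumerate}
\item regularize $\widetilde u=u\circ\pi$ by Demailly's exponential-map convolution $\eta_t\widetilde u$;
\item take the Kiselman--Legendre transform $\widetilde u_{c_\delta,\delta}$, whose Hessian is $\ge-\delta^{\alpha_*}\tau$;
\item add $\delta^{\alpha_*}\widetilde\rho_E$, where $\widetilde\rho_E=A\pi^*\rho+\sum_i b_i\log|s_i|^2_{h_i}-C\le 0$ satisfies $dd^c\widetilde\rho_E\ge\tau$ by the $\pi$-ampleness of $-\sum_i b_iE_i$.
\end{enumerate}
This yields a genuine psh function $\check u_\delta$ on $\Omega_\delta$ that is $-\infty$ on $X_{\rm sing}$. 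Interior singularities are then harmless, and the price on a compact $K\subset X_{\rm reg}$ is only $C_E(K)\delta^{\alpha_*}$. Because averages rather than suprema are controlled, you then still need two further steps to turn $\check u_\delta-u\le C\delta^\nu$ on $K$ into a H\"older bound. These are the lower bound $t_\delta\ge a_K\delta$ on the Kiselman--Legendre minimizer, and Zeriahi's theorem.

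Second, the peak-set gluing uses the wrong function. Since $|P|=1$ on $S$, the term $A\log|P|$ tends to $0$ at $S$ and is negative away from it, so it dominates nothing near $S$. Indeed, $\max(u_\delta-C\delta^\gamma,\,u+A\log|P|)$ equals $u+A\log|P|$ near $S$ only if $u_\delta-u\le C\delta^\gamma$ there. That is exactly what you cannot know, since the datum may be merely logarithmically continuous at $S$, as in Example~\ref{log_example}. For the same reason the coefficient cannot be $A\sim\delta^\gamma/\eta\to 0$: near $S$ the excess of the regularization over $u$ is bounded only by $2\|u\|_\infty$.

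What is needed is a psh function that is uniformly negative on a $\lambda$-neighbourhood of $S$ but only $O(\lambda)$-negative on $K$, added on the competitor side with an $O(1)$ coefficient. The paper takes $\psi_\lambda=-\mathrm{Re}\,\frac{2A\lambda}{2A\lambda+1-P}$, which satisfies
\begin{itemize}
\item $-1\le\psi_\lambda\le 0$ on $\Omega$,
\item $\psi_\lambda\le-\frac49$ on $N_\lambda S$,
\item $\psi_\lambda\ge-c_K\lambda$ on $K$.
\end{itemize}
It forms $\check u_\delta+L\psi_{\delta^\kappa}$ with $L=2\|u\|_\infty/c_0$ and glues it with $u+C\,\mathcal H_\phi(\delta^\kappa/2)\delta^{\alpha/2}$. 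The threshold $\frac{\alpha}{2(\sigma+1)}$ comes from balancing the linear cost $\delta^\kappa$ of this barrier on $K$ against $\mathcal H_\phi(\delta^\kappa/2)\delta^{\alpha/2}\lesssim\delta^{\alpha/2-\kappa\sigma}$. A logarithmic cutoff built from $\log|1-P|$ would cost $C/\log(1/\lambda)$ on $K$ and give only a logarithmic modulus. Your balancing of $\eta^{-\sigma}\delta^{\alpha/2}$ against $A\eta\sim\delta^\gamma$ does not correspond to a working construction.

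Your Step~1 is sound and matches the paper: H\"older barriers with the $\alpha/2$ loss on $\{d_\beta(\cdot,S)\gtrsim\eta\}$. So is the reduction to the fixed density $F(u,x)$.
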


 In Section~\ref{peak function}, we prove that every finite singular
boundary set is a holomorphic peak set. We also give examples showing that  singular boundary sets may have positive dimension and still be holomorphic peak sets. 

\begin{corC}\label{finite-cor}
The conclusion of Theorem~\ref{0_thm:singular-boundary-holder} remains valid if the holomorphic peak-set assumption on $S$ is replaced by the assumption that $S$ is finite.
\end{corC}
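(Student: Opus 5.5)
The plan is to reduce Corollary~\ref{finite-cor} to Theorem~\ref{0_thm:singular-boundary-holder}: it suffices to show that every finite set $S=\{s_1,\dots,s_m\}\subset\partial\Omega$ is a holomorphic peak set for $\Omega$. Once this is established, all hypotheses of Theorem~\ref{0_thm:singular-boundary-holder} hold and its conclusion follows verbatim. (If $S=\emptyset$, then $\phi$ is $\alpha$-H\"older on all of $\partial\Omega$ by compactness, and we may take $P\equiv 0$ formally, or run the theorem's argument with no singular barrier; the interesting case is $S\neq\emptyset$.)

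\textbf{Step 1: a peak function at a single point.} Fix $s\in S$. Since $\Omega$ is strongly pseudoconvex, there is a strictly plurisubharmonic defining function $\rho$ on a neighborhood $U$ of $\overline\Omega$ with $\Omega=\{\rho<0\}$. Because $X$ is Stein, we embed a neighborhood of $\overline\Omega$ properly into some $\mathbb C^N$. Then $\rho$ extends locally near $s$ to a strictly psh function $\tilde\rho$ on an open set of $\mathbb C^N$ (strict psh functions on subvarieties extend locally). Using the Levi polynomial of $\tilde\rho$ at $s$ in ambient coordinates, we set
\[
L_s(z)=2\sum_j \partial_j\tilde\rho(s)(z_j-s_j)+\sum_{j,k}\partial_j\partial_k\tilde\rho(s)(z_j-s_j)(z_k-s_k).
\]
Taylor expansion gives $\operatorname{Re}L_s(z)\ge \tilde\rho(z)-\tilde\rho(s)+c|z-s|^2$ near $s$. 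Therefore, on $\overline\Omega$ near $s$, we have $\operatorname{Re}L_s(z)\ge c|z-s|^2>0$ for $z\ne s$, so $\exp(-L_s)$ peaks locally at $s$. If $\rho$ is only strictly psh after a modification, we first replace $\rho$ by $\rho+A\rho^2$ or by $e^{A\rho}-1$; this does not change the domain. To globalize, we use the standard Stein-space solution of $\bar\partial$, or equivalently Cartan's Theorem~B, to modify $L_s$: we produce a holomorphic $h_s$ on a neighborhood of $\overline\Omega$ with $\operatorname{Re}h_s>0$ on $\overline\Omega\setminus\{s\}$ and $h_s(s)=0$. An alternative route avoids $\bar\partial$ altogether: take a strictly convex exhaustion model and use that $\overline\Omega$ is $\mathcal O(X)$-convex. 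I expect this globalization to be the \textbf{main obstacle}. I would first try the route through Theorem~B, which handles the fact that $\tilde\rho$ lives only in ambient coordinates and that $s$ is a singular point; there, the restriction of the ambient polynomial to $X$ is still holomorphic.

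\textbf{Step 2: combining points.} Set $P_s=e^{-h_s}$, so that $|P_s|<1$ on $\overline\Omega\setminus\{s\}$ and $P_s(s)=1$. For finitely many points, we need a single $P$ with $P(s_i)=1$ for all $i$. We first choose holomorphic functions $\chi_i$ on $X$ with $\chi_i(s_j)=\delta_{ij}$, by Theorem~B for the finite reduced subspace $S$. Then, for a large integer $k$, we take
\[
P=\prod_i P_{s_i}^{k}\Big(\cdots\Big),
\]
but a simpler choice is
\[
P=\sum_i \chi_i^2\,P_{s_i}^{k}\Big/\Big(\textstyle\sum_i\chi_i^2\Big)\ \text{suitably}.
\]
Since division is problematic, the cleaner approach is to use $P=\sum_i \lambda_i P_{s_i}^k$ with $\lambda_i>0$ and $\sum\lambda_i=1$, after arranging $P_{s_i}(s_j)$ to be small for $j\ne i$ by taking $k$ large. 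This gives $|P|\le\sum\lambda_i|P_{s_i}|^k<1$ off $S$, but only $\operatorname{Re}P(s_j)\approx\lambda_j$. To fix the values, we instead use $P=\tfrac12(1+Q)$, where $Q=\sum_i \chi_i\cdots$. Concretely, I would pick $\chi_i$ with $\chi_i(s_j)=\delta_{ij}$ and set $P=\sum_i \chi_i^{\,}P_{s_i}^{k}$ wait normalized. In fact the most robust method is to take $P=\sum_i \lambda_i P_{s_i}^{k} +$ a correction $R$ vanishing to high order on $S$. Then, by strict peaking and compactness, for $k$ large we have $|P|<1$ on $\overline\Omega\setminus S$. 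The correction is chosen via Theorem~B so that $P(s_j)=1$ for every $j$. The compactness estimate, comparing $|P|$ near each $s_j$ with $|P_{s_j}|^k$, is routine.
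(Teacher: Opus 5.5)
Your reduction matches the paper's: the corollary is Theorem~\ref{0_thm:singular-boundary-holder} combined with the fact that a finite $S$ is a holomorphic peak set (Proposition~\ref{prop:finite-singular-boundary-peak}). The gap is that you never establish that fact. In Step~1 the global $h_s$ is asserted, not constructed. The natural reading of ``modify $L_s$ via Theorem~B'' also fails: an additive holomorphic correction has no sign control, while the positivity of the real part of the local support function degenerates at $s$. Note also a sign slip. With your $L_s$, Taylor's formula gives $\operatorname{Re}L_s\le\tilde\rho-\tilde\rho(s)-c|z-s|^2$, so $\operatorname{Re}L_s\le -c|z-s|^2$ on $\overline\Omega$ near $s$. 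Hence $F:=-L_s$ is the function with positive real part, and $\exp(-L_s)$ does not peak.

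The missing idea is to apply Theorem~B to the \emph{reciprocal}. Near $s_j$ one has $\operatorname{Re}(1/F_j)=\operatorname{Re}F_j/|F_j|^2>0$ on $\overline\Omega\setminus\{s_j\}$. On a Stein neighborhood $\Omega^\varepsilon$ of $\overline\Omega$, where $H^1(\Omega^\varepsilon,\mathcal O)=0$, solve the additive Cousin problem whose datum is $1/F_j$ on the overlap of a small neighborhood of $s_j$ with an open set avoiding all the $s_j$. These sets are chosen so that $F_j\neq0$ on the overlap. This gives $G$ holomorphic on $\Omega^\varepsilon\setminus S$ with $G=1/F_j-g_j$ near $s_j$, where $g_j$ is bounded. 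Then $\operatorname{Re}(G+M)>0$ on $\overline\Omega\setminus S$ for $M$ large. The function $H:=1/(G+M)=F_j/(1+F_j(M-g_j))$ extends holomorphically across $s_j$ with $H(s_j)=0$, and $P:=e^{-H}$ peaks exactly on $S$. Use Theorem~B in this Cousin form; a cut-off $\bar\partial$-argument is not automatically available at singular points.

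Step~2 also fails as written. For $m\ge2$, $P=\sum_i\lambda_iP_{s_i}^k$ satisfies $|P(s_j)|\le\lambda_j+\sum_{i\neq j}\lambda_i|P_{s_i}(s_j)|^k<1$. A correction $R$ vanishing on $S$ cannot change the values on $S$, so ``$R$ vanishing to high order on $S$ with $P(s_j)=1$'' is self-contradictory. A correction not vanishing on $S$, on the other hand, comes with no control of $|P|$ off $S$. The paper avoids any combination step by putting all the poles $1/F_1,\dots,1/F_m$ into a single Cousin problem.

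If you prefer to build single-point functions first, combine them ``in parallel''. Suppose each $h_j$ is holomorphic near $\overline\Omega$ with $h_j(s_j)=0$ and $\operatorname{Re}h_j>0$ on $\overline\Omega\setminus\{s_j\}$. Then $H:=\bigl(\sum_j h_j^{-1}\bigr)^{-1}$ equals $h_j/\bigl(1+h_j\sum_{i\neq j}h_i^{-1}\bigr)$ near $s_j$. So it extends holomorphically to a neighborhood of $\overline\Omega$, vanishes on $S$, and has $\operatorname{Re}H>0$ on $\overline\Omega\setminus S$; then $P:=e^{-H}$ works.
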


\begin{introremark}
When $\sigma=0$, the assumption is essentially a uniform
$\alpha$-H\"older condition on $\partial\Omega$. When
$0<\sigma<\alpha$, it allows the boundary datum to have only
$(\alpha-\sigma)$-H\"older regularity at the singular boundary points.
In this case, the boundary contribution to our exponent,
$\min\left\{
\frac{1}{nq+1},\,
\frac{\alpha}{2(\sigma+1)}
\right\}$,
is larger than the exponent $\min\left\{
\frac{1}{nq+1},\,
\frac{\alpha-\sigma}{2}
\right\}$ obtained by treating $\phi$ merely as a
globally $(\alpha-\sigma)$-H\"older function, as in \cite{CerqueiraGoncalves2026}.
When $\sigma\ge\alpha$, no positive H\"older regularity at $S$ is imposed.
In particular, logarithmic moduli of continuity at $S$ are allowed.
\end{introremark}

Theorem~\ref{0_thm:singular-boundary-holder} applies to boundary data which are not H\"older continuous at the
singular boundary points. For instance, when $S=\{a\}$, let $D:=\max_{x,y\in\overline\Omega} d_\beta(x,y)$
and define
$$
\phi(x):=\frac{1}{\log\bigl(2D/d_\beta(x,a)\bigr)} \quad \bigl(x\in\partial\Omega\setminus\{a\}\bigr), \qquad \phi(a):=0
$$
As verified in Example~\ref{log_example},  this function satisfies the assumptions of
Theorem~\ref{0_thm:singular-boundary-holder}  in the case $\sigma\ge \alpha$. However, it is not H\"older continuous at $a$. 
Theorem~\ref{0_thm:singular-boundary-holder}  still yields a local
 H\"older continuity for the corresponding solution on
$\Omega\cap X_{\rm reg}$.

The paper is organized as follows. Section~1 recalls the necessary preliminaries. Section~2 proves Theorem~\ref{main thm}. Section~3 establishes H\"older regularity near the boundary. Section~4 proves Theorem~\ref{0_thm:singular-boundary-holder} and a complementary logarithmic modulus of continuity result.

\medskip \noindent\textit{Acknowledgements.}---
This work was carried out during the author's visit to the Institut de
Math\'ematiques de Toulouse. The author is deeply grateful to Vincent Guedj
for his guidance and valuable comments, to Ahmed Zeriahi and Guilherme
Cerqueira-Gon\c{c}alves for helpful discussions, and to his advisors, An-Min Li
and Li Sheng, for their encouragement and financial support.

\section{Preliminaries}

\subsection{Plurisubharmonic functions on complex spaces.}--Throughout this paper, we let $X$ be a reduced complex analytic space of pure dimension $n\ge 1$, and denote by $X_{\mathrm{reg}}$ the complex manifold of regular points of $X$, and by
$X_{\mathrm{sing}}:=X\setminus X_{\mathrm{reg}}$ its singular subset of complex codimension $\ge 1$.

We briefly recall some standard notions on complex spaces. By definition, for every point $x_{0}\in X$, there exists an open neighborhood $U$ of $x_{0}$, an integer $N\ge 1$, and a holomorphic embedding
$j:U\hookrightarrow \mathbb{C}^{N}$ such that $j(U)$ is an analytic subset of $\mathbb{C}^{N}$. Using these local embeddings, one defines smooth differential forms on $X$ as smooth forms on $X_{\mathrm{reg}}$ which, locally on $U$, are induced by the restriction of smooth ambient forms on $\mathbb{C}^{N}$. Currents on $X$ are then defined by duality through their action on compactly supported smooth forms. In particular, the operators $\partial$, $\bar\partial$, $d$, $d^{c}$, and $dd^{c}$ are well defined in the sense of currents (see \cite{Demailly1985} for more details). Similarly, one can introduce holomorphic and plurisubharmonic functions on complex spaces by means of these local embeddings.

\begin{definition}
Let $u:X\to \mathbb{R}\cup\{-\infty\}$ be a given function.

\begin{enumerate}
\item We say that $u$ is plurisubharmonic on $X$ if it is locally the restriction of a plurisubharmonic function on a local embedding of $X$ onto an analytic subset of $\mathbb{C}^N$.

\item We say that $u$ is weakly plurisubharmonic on $X$ if $u$ is locally bounded from above on $X$ and its restriction to the complex manifold $X_{\mathrm{reg}}$ is plurisubharmonic.
\end{enumerate}
\end{definition}

We also recall a useful characterization due to Forn\ae ss and Narasimhan \cite[Theorem 5.3.1]{FornaessNarasimhan1980}: a function $u$ is plurisubharmonic on $X$ if and only if for every analytic disc $h:\mathbb{D}\to X$, the composition $u\circ h$ is subharmonic or identically equal to $-\infty$.

If $u$ is weakly plurisubharmonic on $X$, then its restriction to the complex manifold $X_{\mathrm{reg}}$ is plurisubharmonic, hence upper semicontinuous on $X_{\mathrm{reg}}$. It is therefore natural to extend $u$ to all of $X$ by setting
$$
u^*(x):=\limsup_{X_{\mathrm{reg}}\ni y\to x}u(y),\qquad x\in X.
$$
The function $u^{*}$ is upper semicontinuous, locally integrable on $X$, and satisfies
$
dd^{c}u^{*}\ge 0
$
in the sense of currents on $X$; see \cite[Th\'eor\`eme 1.7]{Demailly1985}. Moreover, when $X$ is locally irreducible, the two notions introduced above coincide, by \cite[Th\'eor\`eme 1.10]{Demailly1985}.    

\begin{theorem}\label{weak_psh_psh}
Let $X$ be a locally irreducible analytic space, and let $u:X\to \mathbb{R}\cup\{-\infty\}$ be a weakly plurisubharmonic function on $X$. Then the function $u^{*}$  is plurisubharmonic on $X$.
\end{theorem}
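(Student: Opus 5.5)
The plan is to reduce the statement to the local situation and then use Demailly's result. Plurisubharmonicity is a local property, so it suffices to show that every point $x_0\in X$ has a neighborhood $U$ on which $u^*$ is the restriction of a plurisubharmonic function defined near $j(U)$ in $\mathbb C^N$, where $j$ is a local embedding. On $X_{\rm reg}$ nothing needs to be proved. There $u^*=u$, because $u$ is upper semicontinuous on $X_{\rm reg}$, and $u$ is plurisubharmonic on the manifold $X_{\rm reg}$. A plurisubharmonic function on a submanifold extends locally to a neighborhood in $\mathbb C^N$: compose with a holomorphic retraction onto the submanifold. The real content is therefore at singular points.

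Fix $x_0\in X_{\rm sing}$. I would follow the standard route and pass to the normalization. Because $X$ is locally irreducible, the normalization $\nu:\widetilde X\to X$ is a homeomorphism, and it is biholomorphic over $X_{\rm reg}$. Set $v:=u\circ\nu$ on $\nu^{-1}(X_{\rm reg})$. This function is plurisubharmonic and locally bounded above on the complement of the nowhere dense analytic set $\nu^{-1}(X_{\rm sing})$. On a normal space, the Grauert--Remmert extension theorem says that such a function extends uniquely to a plurisubharmonic function $\tilde v$ on $\widetilde X$, given by the upper regularization $\tilde v=\limsup v$. Since $\nu$ is a homeomorphism, $\tilde v\circ\nu^{-1}=u^*$. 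So $u^*$ is plurisubharmonic in the disc sense: for each analytic disc $h:\mathbb D\to X$, the disc lifts to $\widetilde X$, because $\nu$ is a homeomorphism and $\widetilde X$ is normal, and $u^*\circ h=\tilde v\circ\tilde h$ is subharmonic or $\equiv-\infty$.

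The Forn\ae ss--Narasimhan characterization \cite[Theorem 5.3.1]{FornaessNarasimhan1980} then upgrades this disc property to plurisubharmonicity in the embedded sense of the Definition. This gives the theorem. It is also exactly \cite[Th\'eor\`eme 1.10]{Demailly1985}, which we may simply cite.

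The main obstacle is lifting holomorphic discs through $\nu$. A priori the lift $\tilde h=\nu^{-1}\circ h$ is only continuous. It is holomorphic off the discrete set $h^{-1}(X_{\rm sing})$, provided $h(\mathbb D)\not\subset X_{\rm sing}$, and then it extends holomorphically by Riemann extension in $\mathbb C^N$ after embedding $\widetilde X$. Discs contained in $X_{\rm sing}$ must be handled by induction on dimension, using $u^*$ restricted to $X_{\rm sing}$. If that becomes delicate, I would instead argue directly with currents. One shows that $u^*$ is locally integrable with $dd^c u^*\ge0$ \cite[Th\'eor\`eme 1.7]{Demailly1985}. Then one obtains the local psh extension from the fact that, on a locally irreducible space, the decreasing regularizations constructed in \cite{Demailly1985} converge to $u^*$.
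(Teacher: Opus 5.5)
The paper gives no argument for this statement. It is simply quoted from Demailly's Th\'eor\`eme~1.10, which is the citation you mention in passing, so on that level you agree with the paper.

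Your own normalization argument is a genuinely different, self-contained route, and its architecture is right. Local irreducibility makes $\nu:\widetilde X\to X$ a homeomorphism. Grauert--Remmert on the normal space $\widetilde X$ then gives a psh $\tilde v$ with $\tilde v\circ\nu^{-1}=u^*$. The Forn\ae ss--Narasimhan criterion (applicable since $u^*$ is upper semicontinuous) brings plurisubharmonicity back to $X$, \emph{provided} every holomorphic disc $h:\mathbb D\to X$ lifts holomorphically to $\widetilde X$. Compared with the bare citation, this makes visible that local irreducibility enters only through $\nu$ being a homeomorphism.

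The step you flag, however, is not handled correctly. An induction ``using $u^*$ restricted to $X_{\rm sing}$'' has nothing to run on. The hypothesis gives no plurisubharmonicity information on $u^*|_{X_{\rm sing}}$. Moreover, $X_{\rm sing}$ need not be locally irreducible: for the normal hypersurface $\{w^2=xyz\}\subset\mathbb C^4$ it is three concurrent lines, so no inductive form of the theorem applies to it.

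The currents fallback is not an argument either. The inequality $dd^c u^*\ge 0$ holds for every weakly psh function, for instance on $\{zw=0\}$ with $u=0$ on one line and $u=1$ on the other, where $u^*$ is not psh. And the existence of decreasing psh regularizations of $u^*$ is essentially the conclusion itself.

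The fix is to run the induction on the lifting problem, where $u^*$ plays no role.
\begin{itemize}
\item If $f$ is holomorphic on an open subset of $\widetilde X$, then $g:=f\circ\nu^{-1}$ is continuous.
\item For every analytic set $A\subset X$, $g|_{A_{\rm reg}}$ is holomorphic. Indeed, $\nu$ maps $\nu^{-1}(A)$ properly and homeomorphically onto $A$. An injective holomorphic map between manifolds of equal dimension is biholomorphic onto an open set, so $\nu$ is biholomorphic from $\nu^{-1}(A)_{\rm reg}\cap\nu^{-1}(A_{\rm reg})$ onto $A_{\rm reg}\setminus\nu\bigl(\nu^{-1}(A)_{\rm sing}\bigr)$. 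This is the complement of a nowhere dense analytic subset of $A_{\rm reg}$, so Riemann's extension theorem finishes.
\item Now put $A_0:=X$, $A_{k+1}:=(A_k)_{\rm sing}$, and let $k$ be largest with $h(\mathbb D)\subset A_k$. Then $h^{-1}(A_{k+1})$ is discrete, and $g\circ h$ is continuous on $\mathbb D$ and holomorphic off it, hence holomorphic.
\end{itemize}
Applying this, locally on $\mathbb D$, to embedding coordinates of $\widetilde X$ shows that $\nu^{-1}\circ h$ is holomorphic for every disc, including those contained in $X_{\rm sing}$. Your argument then closes.
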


Since $u$ is plurisubharmonic on $X_{\mathrm{reg}}$, we have
$u^{*}=u$ on $X_{\mathrm{reg}}.$
Hence $u^{*}$ is the upper semicontinuous extension of $u|_{X_{\mathrm{reg}}}$ to $X$. We denote by $\psh(X)$ the set of plurisubharmonic functions on $X$. 
We record the following useful consequence.

\begin{cor}\label{cor of family}
Let $\mathcal{U}\subset \psh(X)$ be a nonempty family of plurisubharmonic functions which is locally uniformly bounded from above on $X$. Then its upper envelope
$$
U:=\sup\{u \,;\, u\in \mathcal{U}\}
$$
is a well-defined Borel function, and its upper semicontinuous regularization $U^{*}$ is plurisubharmonic on $X$.
\end{cor}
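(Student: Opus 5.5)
Since the statement concerns upper envelopes, I would argue locally, with $X$ assumed locally irreducible as in the setting of Theorem~\ref{weak_psh_psh}. The proof then reduces to that theorem together with the classical Choquet lemma and the Euclidean theory on the complex manifold $X_{\rm reg}$.

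First, $U$ is well defined as a function with values in $\mathbb{R}\cup\{-\infty\}$, because the family is nonempty and locally uniformly bounded above. For Borel measurability, I invoke Choquet's lemma. The space $X$ is second countable (locally embedded in $\mathbb{C}^N$, and we may work on a countable union of such charts). Hence there is a countable subfamily $\{u_j\}\subset\mathcal U$ with $(\sup_j u_j)^*=U^*$. I would avoid claiming $U=\sup_j u_j$ pointwise. Instead I note directly that $U$ is lower semicontinuous-type measurable: each $u\in\mathcal U$ is upper semicontinuous, hence Borel. The set $\{U>t\}=\bigcup_{u}\{u>t\}$ is an arbitrary union of sets that need not be open, so this alone is not enough. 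I would therefore state Borel measurability in the weaker form that $U$ agrees with the Borel function $\sup_j u_j$ outside a negligible set, or simply use that $U^*$ is Borel, which is the standard convention. This is the main subtlety: making "well-defined Borel function" precise. I would follow the standard phrasing, where the countable subfamily is chosen so that $\sup_j u_j\le U\le U^*=(\sup_j u_j)^*$.

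Second, for plurisubharmonicity, restrict to $X_{\rm reg}$. There $\mathcal U|_{X_{\rm reg}}$ is a locally uniformly bounded above family of psh functions on a complex manifold. By the classical result (Lelong), the usc regularization $(U|_{X_{\rm reg}})^*$ taken within $X_{\rm reg}$ is psh on $X_{\rm reg}$. Define $v:=U^*$ on $X_{\rm reg}$. Since $X_{\rm reg}$ is open, the regularization in $X$ and in $X_{\rm reg}$ agree on $X_{\rm reg}$. Moreover $v$ is locally bounded above near singular points, by the local uniform bound. So $v$ is weakly psh on $X$. By Theorem~\ref{weak_psh_psh}, $v^*$ (the limsup from regular points) is psh on $X$.

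Finally, it remains to check $v^*=U^*$ at points $x\in X_{\rm sing}$. One inequality is immediate: $v^*(x)\le U^*(x)$, since $v\le U^*$. For the reverse inequality, note that each $u\in\mathcal U$ is psh on $X$, hence satisfies $u(x)=\limsup_{X_{\rm reg}\ni y\to x}u(y)$. This holds because psh functions on locally irreducible spaces are determined by their values on the dense set $X_{\rm reg}$: by the uniqueness in Theorem~\ref{weak_psh_psh}, $u=(u|_{X_{\rm reg}})^*$. Thus $u(x)\le v^*(x)$ for all $u$, giving $U\le v^*$ everywhere. Since $v^*$ is usc, this yields $U^*\le v^*$. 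Hence $U^*=v^*$ is psh on $X$.
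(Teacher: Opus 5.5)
Your route to plurisubharmonicity is the one the paper intends; it gives no proof beyond calling the statement a consequence of Theorem~\ref{weak_psh_psh}. You are right to assume local irreducibility, and it is genuinely needed. On the reducible curve $X=\{zw=0\}\subset\mathbb C^2$, the psh functions $u_j=\max\{\tfrac1j\log|w|,-1\}|_X$ give $U^*=-1$ on the punctured $z$-axis but $U^*(0)=0$, so $U^*$ is not psh. You also correctly isolate the step the paper leaves implicit, namely $U^*=v^*$ on $X_{\rm sing}$.

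However, your justification of $u(x)=\limsup_{X_{\rm reg}\ni y\to x}u(y)$ for $u\in\mathcal U$ is not supported. Theorem~\ref{weak_psh_psh} contains no uniqueness statement, and ``a psh function is determined by its values on $X_{\rm reg}$'' is exactly what must be shown. The inequality $\ge$ is upper semicontinuity. For $\le$, choose an irreducible curve germ in $X$ through $x$ meeting $X_{\rm sing}$ only at $x$; you can get one by cutting a local embedding with a generic linear subspace through $x$ of codimension $n-1$. Normalize it to a holomorphic disc $h\colon\mathbb D\to X$ with $h(0)=x$ and $h(\mathbb D\setminus\{0\})\subset X_{\rm reg}$. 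Since $u\circ h$ is subharmonic or $\equiv-\infty$ (directly from the ambient definition of psh), you get $u(x)=u(h(0))\le\limsup_{\zeta\to0,\,\zeta\ne0}u(h(\zeta))\le\limsup_{X_{\rm reg}\ni y\to x}u(y)$. With this, your comparison $v^*\le U^*\le v^*$ goes through.

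On Borel measurability your hesitation is justified. The right conclusion, though, is that this part of the statement is false, not that ``Borel'' may be reinterpreted; and ``lower semicontinuous-type measurable'' is not a meaningful notion here. Here is a counterexample on $X=\mathbb D\subset\mathbb C$.

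\begin{itemize}
\item Let $K\subset\mathbb D$ be an uncountable compact polar set (a Cantor set of zero logarithmic capacity) and let $E\subset K$ be a non-Borel subset.
\item For $a\in K\setminus E$, take a suitably weighted convergent sum of Evans potentials of the compact polar sets $K\setminus B(a,1/m)$, $m\ge1$. This gives a subharmonic $s_a\le0$ on $\mathbb D$ with $s_a=-\infty$ exactly on $K\setminus\{a\}$.
\item The family $\{s_a/j:\ a\in K\setminus E,\ j\ge1\}$ is bounded above by $0$. Its envelope equals $0$ on $\mathbb D\setminus E$ and $-\infty$ on $E$, so it is not Borel.
\end{itemize}

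What does hold is the weaker statement you sketch. Replace the Choquet subfamily by $\max\{u_1,\dots,u_j\}$, which is an increasing sequence of psh functions. Then $\sup_j u_j\le U\le U^*=(\sup_j u_j)^*$, and the two outer Borel functions agree almost everywhere (indeed off a pluripolar set, by Bedford--Taylor). Hence $U$ is measurable for the completed volume measure. This weaker version is harmless for the paper, which only ever uses $U^*$ and proves $U=U^*$ in Lemma~\ref{lem subsolution}.
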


 A subset $\Omega\subset X$ is said to be relatively compact in $X$ if its closure
$\overline{\Omega}$ is compact in $X$. In this case, we write $$
\Omega\Subset X.
$$
Following \cite[Theorem 6.1]{FornaessNarasimhan1980}, we say that $X$ is Stein if it admits a $C^{2}$-smooth strongly plurisubharmonic exhaustion. We will use the following definition:
\begin{definition}
A  domain $\Omega\Subset X$ is strongly pseudoconvex if it admits a negative $C^{2}$-smooth strongly plurisubharmonic exhaustion, i.e. a function $\rho$ in a neighborhood $\Omega'$ of $\overline{\Omega}$ such that
$
\Omega=\{x\in \Omega' \,;\, \rho(x)<0\},
$
and, for every $c<0$, the sublevel set
$\{x\in \Omega' \,;\, \rho(x)<c\}$
is relatively compact in $\Omega$.
\end{definition}

\subsection{Comparison principle}--- 
On complex spaces, the complex Monge--Amp\`ere operator was introduced and studied in \cite{Bedford1982,Demailly1985}. In this setting, if
$u\in \psh(X)\cap L^\infty_{\mathrm{loc}}(X),$
then the Monge--Amp\`ere measure
$(\ddc u)^n$
is well defined on the regular part $X_{\mathrm{reg}}$ and extends to a Borel measure on $X$ by assigning zero mass to $X_{\mathrm{sing}}$. This construction extends the foundational Bedford--Taylor theory \cite{BedfordTaylor1976,BedfordTaylor1982} to singular complex spaces.

As a consequence, several standard properties of the complex Monge--Amp\`ere operator on $\psh(X)\cap L^\infty_{\mathrm{loc}}(X)$ remain valid in singular complex spaces. In particular, we recall the following comparison principle in  \cite[Theorem 4.1]{Bedford1982} (see also \cite[Theorem 3.29, Corollary 3.30]{GuedjZeriahi2017}).  For convenience, we set
$
\psh^\infty(X):=\psh(X)\cap L^\infty(X).
$

\begin{prop}\label{comparison}
Let $\Omega\Subset X$ be an open set and $u,v\in \psh^{\infty}(\Omega)$. Assume that $\lim \inf_{x\to \zeta} \bigl( u(x) -v(x)\bigr)\ge 0$ for any $\zeta \in \partial\Omega$. Then 
$$
\int_{\{u<v\}}(\ddc v)^n \le \int_{\{u<v\}}(\ddc u)^n. 
$$
In particular, if $(\ddc u)^n \le (\ddc v)^n $ weakly on $\Omega$, then $v\le u$ on $\Omega$.
    
\end{prop}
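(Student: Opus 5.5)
The statement just made is Proposition~\ref{comparison}, the comparison principle on singular spaces. My plan is to reduce it to the classical Bedford--Taylor argument on the regular locus. The key point is that all Monge--Amp\`ere measures of bounded psh functions put no mass on $X_{\rm sing}$. Moreover, $X_{\rm sing}$ is a closed analytic subset of codimension $\ge 1$ in $X$, and so it is pluripolar in the appropriate sense.

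First I replace $u$ by $u+\varepsilon$, with $\varepsilon>0$ small, and the set $\{u<v\}$ by $\{u+\varepsilon<v\}$. The liminf hypothesis then gives a compact set $K\subset\Omega$ with $\{u+\varepsilon<v\}\subset K$. Letting $\varepsilon\to0$ at the end recovers the statement, by monotone convergence, since $\{u+\varepsilon<v\}\uparrow\{u<v\}$. Next I set $w:=\max(u+\varepsilon,v)$. This function is psh and bounded, and it equals $u+\varepsilon$ near $\partial\Omega$. By the maximum principle for the Monge--Amp\`ere operator on the open set $\{u+\varepsilon<v\}$ (plurifine locality), we have $(\ddc w)^n=(\ddc v)^n$ there.

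The core identity is Stokes' theorem: the total masses $\int_\Omega(\ddc w)^n$ and $\int_\Omega(\ddc u)^n$ coincide, because $w=u+\varepsilon$ outside a compact set. On a singular space I would justify this by smoothing. Locally, in embeddings, $u$ and $w$ are restrictions of bounded psh functions. I would approximate them by decreasing smooth psh functions, via Demailly's / Bedford's local regularization on analytic subsets, and use Bedford--Taylor continuity along decreasing sequences. Integration by parts for currents on $X$ is valid, as recalled above through \cite{Demailly1985}. With a cutoff $\chi\equiv1$ on a neighborhood of $K$, one gets $\int\chi(\ddc w)^n=\int\chi(\ddc u)^n$, since $w-u-\varepsilon$ has compact support inside $\{\chi=1\}$ and $\ddc$ of it pairs to zero. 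Then
\[
\int_{\{u+\varepsilon<v\}}(\ddc v)^n+\int_{\{u+\varepsilon\ge v\}}(\ddc w)^n=\int(\ddc u)^n,
\]
and on $\{u+\varepsilon>v\}$ we have $(\ddc w)^n=(\ddc u)^n$. The boundary set $\{u+\varepsilon=v\}$ is handled by the standard inequality $(\ddc\max(a,b))^n\ge \mathbf 1_{\{a\ge b\}}(\ddc a)^n$. This yields $\int_{\{u+\varepsilon<v\}}(\ddc v)^n\le\int_{\{u+\varepsilon<v\}}(\ddc u)^n\le\int_{\{u<v\}}(\ddc u)^n$.

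For the second claim, I would argue by contradiction and assume $\{u<v\}\neq\emptyset$. Let $\rho$ be the strongly psh exhaustion, and let $v_\delta:=v+\delta(\rho-\max_{\overline\Omega}\rho)$. Then $v_\delta\le v$, and for small $\delta$ the set $\{u<v_\delta\}$ is nonempty and open. The first claim, applied to $v_\delta$ and combined with $(\ddc v_\delta)^n\ge(\ddc v)^n+\delta^n(\ddc\rho)^n$, gives $\delta^n\int_{\{u<v_\delta\}}(\ddc\rho)^n\le0$. Since $(\ddc\rho)^n$ charges every nonempty open subset of $X_{\rm reg}$, and $X_{\rm reg}$ is dense, this is a contradiction. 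On a general open $\Omega$ without such a $\rho$, I would use a local strongly psh function instead; this is available because $X$ is Stein.

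I expect the main obstacle to be the Stokes/approximation step across $X_{\rm sing}$. There one must check that no boundary mass is created at the singular set. I would handle it through the fact that $(\ddc\cdot)^n$ of bounded psh functions is defined as the trivial extension from $X_{\rm reg}$. I would also use the continuity of the operator under decreasing limits, as in \cite{Bedford1982}, so that the integration by parts may be performed on $X_{\rm reg}$ with cutoffs vanishing near $X_{\rm sing}$, whose error terms tend to zero by capacity estimates.
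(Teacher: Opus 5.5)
Your argument is the standard proof of the comparison principle: Stokes with a cutoff, plurifine locality, the inequality $(dd^c\max(a,b))^n\ge\mathbf 1_{\{a\ge b\}}(dd^c a)^n$, and then a perturbation by a strictly psh function. This is the proof of \cite[Theorem~3.29, Corollary~3.30]{GuedjZeriahi2017}, transported to $X$ via the fact that Monge--Amp\`ere measures of bounded psh functions put no mass on $X_{\rm sing}$. The paper gives no argument of its own; it only cites this and \cite[Theorem~4.1]{Bedford1982}, so your route is essentially the intended one.

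The first part is sound with two adjustments. Keep the cutoff $\chi$ in your displayed identity, since the total mass $\int_\Omega(dd^c u)^n$ may be infinite. Also, $\{u+\varepsilon<v\}$ is only plurifine open, not open, but that is all locality needs. The Stokes step is easier than you fear. With the intrinsic definition of the operator on $X$ as currents \cite{Demailly1985}, one has $(dd^cw)^n-(dd^cu)^n=dd^c\big((w-u)T\big)$ with $T=\sum_{j=0}^{n-1}(dd^cw)^j\wedge(dd^cu)^{n-1-j}$ closed. Hence $\int\chi\,\big[(dd^cw)^n-(dd^cu)^n\big]=\varepsilon\int T\wedge dd^c\chi=0$ directly, with no smoothing or capacity estimates near $X_{\rm sing}$.

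In the second part, three things need repair.

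\textbf{Openness.} The set $\{u<v_\delta\}$ is not open in general, because $u$ and $v_\delta$ are merely upper semicontinuous. So you cannot use the fact that $(dd^c\rho)^n$ charges nonempty open sets. Instead, show that each $\{u<v_\delta\}$ is Lebesgue-null, hence so is $\{u<v\}=\bigcup_\delta\{u<v_\delta\}$. Then conclude $v\le u$ everywhere, as in Lemma~\ref{0-1}: use the sub-mean value property on $X_{\rm reg}$, and $v(a)=\limsup_{X_{\rm reg}\ni y\to a}v(y)$ at singular points.

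\textbf{Finiteness.} Cancelling $\int_{\{u<v_\delta\}}(dd^cv)^n$ from both sides requires this integral to be finite, which is not automatic. For $\Omega=\{\rho<0\}$ one has $\max_{\overline\Omega}\rho=0$, attained on $\partial\Omega$, so $\{u<v_\delta\}$ need not be relatively compact in $\Omega$. Work instead on $\{u+\varepsilon<v_\delta\}\Subset\Omega$, where all masses are finite, and then let $\varepsilon\to0$. Alternatively, set $v_\delta:=v+\delta(\rho-\max_{\overline\Omega}\rho-1)$.

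\textbf{The function $\rho$.} It must be a bounded strictly psh function on all of $\Omega$, not a local one. Without it the second assertion fails: take $X$ compact, $\Omega=X$, $u=0$, $v=1$. It is precisely the Stein hypothesis that supplies such a $\rho$.
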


We next prove a comparison lemma tailored to our Dirichlet problem \eqref{main-eq-general}.
\begin{lemma}\label{0-1}
    Let $F_1, F_2$ satisfy Assumption \ref{ass:F} and  $0\le F_1(t,x) \le F_2(t,x)$. Let $w_1,w_2\in \psh^\infty(\Omega)$ such that
$(\ddc w_1)^n\le F_1(w_1,x)dV_X$ and
$(\ddc w_2)^n\ge F_2(w_2,x)dV_X$ in $\Omega$.
    If $w_2 \le w_1$ on $\partial \Omega$, then $w_2 \le w_1$ in $\Omega$.
\end{lemma}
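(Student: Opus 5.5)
We argue by contradiction, using the comparison principle of Proposition~\ref{comparison} applied to the pair $u=w_1$, $v=w_2$ on the open set where $w_2$ exceeds $w_1$. The boundary hypothesis $w_2\le w_1$ on $\partial\Omega$ should be read as $\liminf_{x\to\zeta}(w_1-w_2)(x)\ge 0$ for $\zeta\in\partial\Omega$, which is what the proposition requires.

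Suppose the open set $\{w_1<w_2\}$ is nonempty. To obtain strictness we perturb $w_2$. Fix a negative strictly plurisubharmonic exhaustion $\rho$ of $\Omega$; it is bounded on $\overline\Omega$ and satisfies $(\ddc\rho)^n\ge c\,dV_X$ on compact subsets. For small $\varepsilon>0$ set
\[
v_\varepsilon:=w_2+\varepsilon\rho .
\]
Since $\rho<0$, we have $v_\varepsilon\le w_2$, and still $\liminf_{\partial\Omega}(w_1-v_\varepsilon)\ge 0$. The set $E_\varepsilon:=\{w_1<v_\varepsilon\}$ is nonempty for $\varepsilon$ small, because it increases to $\{w_1<w_2\}$ as $\varepsilon\to0$. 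By Proposition~\ref{comparison},
\[
\int_{E_\varepsilon}(\ddc v_\varepsilon)^n\le\int_{E_\varepsilon}(\ddc w_1)^n .
\]

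Now compare the two densities on $E_\varepsilon$. Expanding the mixed Monge--Amp\`ere measure gives
\[
(\ddc v_\varepsilon)^n\ge(\ddc w_2)^n+\varepsilon^n(\ddc\rho)^n .
\]
On $E_\varepsilon$ we have $w_1<v_\varepsilon\le w_2$. Using the monotonicity (F2) and then $F_1\le F_2$,
\[
(\ddc w_2)^n\ge F_2(w_2,x)\,dV_X\ge F_1(w_2,x)\,dV_X\ge F_1(w_1,x)\,dV_X\ge(\ddc w_1)^n .
\]
For this to make sense we need $w_1,w_2\le M$ so that $F_i$ is defined. This is implicit in the statement; otherwise one extends $F_i$ by $F_i(M,x)$ for $t>M$, which preserves (F2).

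Combining these inequalities yields
\[
\varepsilon^n\int_{E_\varepsilon}(\ddc\rho)^n\le 0 .
\]
Since $E_\varepsilon$ is a nonempty open set, it meets $X_{\rm reg}$, which is dense. There $(\ddc\rho)^n$ is a strictly positive smooth volume form, so the integral is positive. This is a contradiction, and therefore $w_2\le w_1$ everywhere on $\Omega$.

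The main point of care is the inequality $(\ddc v_\varepsilon)^n\ge(\ddc w_2)^n+\varepsilon^n(\ddc\rho)^n$ for bounded psh functions on the singular space. It follows from multilinearity and positivity of the mixed measures $(\ddc w_2)^k\wedge(\ddc\rho)^{n-k}$, which holds on $X_{\rm reg}$ by Bedford--Taylor theory and transfers to $X$ because these measures put no mass on $X_{\rm sing}$. The second subtlety is that the comparison principle needs inequalities between measures restricted to $E_\varepsilon$. These follow from the pointwise density inequalities, since all the measures involved are absolutely continuous or bounded below on that set.
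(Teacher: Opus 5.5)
\textbf{Verdict: genuine gap, easily repaired.} Your scheme is the paper's: perturb $w_2$ by $\varepsilon$ times a bounded psh function with nondegenerate Monge--Amp\`ere measure, apply Proposition~\ref{comparison} on $E_\varepsilon$, and extract $\varepsilon^n\int_{E_\varepsilon}(\ddc\rho)^n\le 0$. Using $\rho$ instead of the paper's $\psi$ (the solution of $(\ddc\psi)^n=dV_X$, $\psi|_{\partial\Omega}=0$) is legitimate, and it even avoids invoking \cite[Theorem A]{GuedjGuenanciaZeriahi2023}. Your chain of measure inequalities on $E_\varepsilon$ is fine.

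The gap is in the final step. $E_\varepsilon=\{w_1<w_2+\varepsilon\rho\}$ is \emph{not} open in general. Both $w_1$ and $v_\varepsilon$ are only upper semicontinuous, so $v_\varepsilon-w_1$ has no semicontinuity. Sets of the form $\{u<v\}$ with $u,v$ psh are open only in the plurifine topology. So "nonempty" does not give "positive $dV_X$-measure". The inequality $\varepsilon^n\int_{E_\varepsilon}(\ddc\rho)^n\le 0$ only tells you that each $E_\varepsilon$ is $dV_X$-null, and no contradiction has been reached yet.

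The missing step is the one the paper uses to close its argument:
\begin{enumerate}
\item Since $\{w_1<w_2\}=\bigcup_j E_{1/j}$, this set is null, i.e.\ $w_2\le w_1$ almost everywhere.
\item On $\Omega_{\rm reg}$, upgrade this via the sub-mean value property in local charts. For psh $w$ one has $w(x)=\lim_{r\to0}$ of the averages of $w$ over coordinate balls centred at $x$ (sub-mean value gives $\le$, upper semicontinuity gives $\ge$). The averages of $w_2$ are dominated by those of $w_1$, so $w_2(x)\le w_1(x)$.
\item At points $a\in\Omega_{\rm sing}$, you additionally need $w_i(a)=\limsup_{X_{\rm reg}\ni y\to a}w_i(y)$. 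This holds in the present locally irreducible setting (see the discussion around Theorem~\ref{weak_psh_psh}).
\end{enumerate}
You could keep your contradiction structure by first proving that a nonempty set $\{w_1<w_2\}$ has positive measure. That is exactly this sub-mean value argument, and openness does not supply it.
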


\begin{proof}

By \cite[Theorem A]{GuedjGuenanciaZeriahi2023}, there exists a unique solution $\psi\in \psh(\Omega)\cap C^0(\overline{\Omega})$ to the Dirichlet problem $(\ddc \psi)^n = dV_X$ in $\Omega$ and $\psi|_{\partial\Omega} = 0$ on $\partial \Omega$.
By the maximum principle, we have $\psi\le 0$ on $\Omega$. For any $\varepsilon>0$, set
\begin{equation}\label{def-w2e}
w_{2,\varepsilon}:=w_2+\varepsilon\psi.
\end{equation}
Since $\psi\le 0$ on $\Omega$, it follows that $w_{2,\varepsilon}\le w_2$, and 
therefore
$
\{w_1<w_{2,\varepsilon}\}\subset \{w_1<w_2\}.
$
The comparison principle Proposition \ref{comparison} yields
\begin{equation}
    \int_{\{w_1<w_{2,\varepsilon}\}} (\ddc w_{2,\varepsilon})^n \le   \int_{\{w_1<w_{2,\varepsilon}\}} (\ddc w_1)^n \le \int_{\{w_1<w_{2,\varepsilon}\}} F_1(w_1,x)dV_X.
\end{equation}
Since $(\ddc w_{2,\varepsilon})^n \ge  (\ddc w_{2})^n + \varepsilon^n (\ddc\psi)^n $, we obtain
\begin{align*}
      \int_{\{w_1<w_{2,\varepsilon}\}}F_2(w_2,x)  dV_X +   \varepsilon^n  \int_{\{w_1<w_{2,\varepsilon}\}}dV_X &\le \int_{\{w_1<w_{2,\varepsilon}\}} (\ddc w_{2})^n  +   \varepsilon^n  \int_{\{w_1<w_{2,\varepsilon}\}} (\ddc\psi)^n \\
      &\le \int_{\{w_1<w_{2,\varepsilon}\}} F_1(w_1,x)dV_X.  
\end{align*}
Since  $t\mapsto F_1(t,x)$ is nondecreasing, $0\le F_1\le F_2$ and $w_1<w_{2,\varepsilon} < w_2$ on $\{w_1<w_{2,\varepsilon}\}$, we have
\begin{equation}
     \int_{\{w_1<w_{2,\varepsilon}\}}F_1(w_1,x)  dV_X+   \varepsilon^n  \int_{\{w_1<w_{2,\varepsilon}\}} dV_X  \le \int_{\{w_1<w_{2,\varepsilon}\}} F_1(w_1,x)dV_X. 
\end{equation}
It follows that the set $\{w_1<w_{2,\varepsilon}\}$ has Lebesgue measure zero. Since $\{w_1<w_{2}\}= \bigcup_{j\ge1} \{w_1<w_{2,\frac{1}{j}}\}$, it follows that the set $\{w_1<w_{2}\}$ also has Lebesgue measure zero. Hence, $w_2\le w_1$ on $\Omega$ by the sub-mean value inequalities.
\end{proof}

\section{Dirichlet problem on singular complex spaces}Throughout the remainder of this paper, we fix a smooth positive $(1,1)$-form
$\beta$ and use $\beta^n$ as a reference volume form. Write
$dV_X=h\,\beta^n$ on $\Omega\cap X_{\mathrm{reg}}$, where $h$ is smooth and
positive. We extend both measures by zero across $X_{\mathrm{sing}}$. In local
embeddings, smooth positive volume forms are mutually comparable. Hence, since
$\Omega\Subset X$, the function $h$ is bounded on $\Omega\cap X_{\mathrm{reg}}$.
Thus $(dd^c u)^n=F(u,x)\,dV_X$
is equivalent to
$$
(dd^c u)^n=\widetilde F(u,x)\,\beta^n,\qquad
\widetilde F(t,x):=h(x)F(t,x).
$$
If $F$ satisfies Assumption A with respect to $dV_X$, then $\widetilde F$
satisfies Assumption A with respect to $\beta^n$. Replacing $\widetilde F$ by
$F$, we shall henceforth work with
$$
(dd^c u)^n=F(u,x)\,\beta^n .
$$
All $L^p$-spaces below are taken with respect to $\beta^n$; in particular,
$L^p(\Omega):=L^p(\Omega,\beta^n)$. The form $\beta$ is used only to define a
smooth positive volume form; it need not be closed.

\begin{setting}\label{setting}
Let $X$ be a Stein space of dimension $n\ge 1$, reduced and locally irreducible. Fix a smooth positive $(1,1)$-form $\beta$ on $X$.  Let $\Omega\Subset X$ be a relatively compact strongly pseudoconvex domain admitting a $C^2$-smooth exhaustion function $\rho$. Let $\phi\in C^0(\partial\Omega)$.
\end{setting}

\begin{definition}
    A plurisubharmonic function $v\in \psh^{\infty}(\Omega)$ is a subsolution to the Dirichlet problem \eqref{main-eq-general} with data $(\phi, F)$ if the following two conditions are satisfied:
    
\begin{enumerate}
        \item $v^*(\zeta):= \limsup_{\Omega\ni z\to \zeta} v(z) \le \phi(\zeta)$, for any $\zeta  \in \partial \Omega$ ,
        \item   $(\ddc v)^n  \ge F(v,x)\beta^n$ weakly in $\Omega$.
    \end{enumerate}
\end{definition}

We consider the family $S_{\phi, F}$ of all subsolutions to the Dirichlet problem \eqref{main-eq-general} with data $(\phi,F)$, together with its upper envelope whenever the latter exists.

\subsection{The subsolution property}

\begin{lemma}\label{lem subsolution}
    In Setting \ref{setting}, assume that the Dirichlet problem \eqref{main-eq-general} with data  $(\phi, F)$ admits a subsolution $v_0 \in S_{\phi, F}(\Omega)$. Then the upper envelope of subsolutions 
    \begin{equation}
        U:=U_{\phi, F} :=\sup \{ v; v\in S_{\phi, F}(\Omega)\}
    \end{equation}
is a subsolution to the Dirichlet problem \eqref{main-eq-general}, i.e. $U_{\phi,F} \in  S_{\phi, F}(\Omega)$. Moreover, if $v_0=\phi$ on $\partial \Omega$, then 
\begin{equation}
    \lim_{z\rightarrow \zeta} U(z)=\phi(\zeta), \quad \text{for any } \zeta \in \partial \Omega .
\end{equation}    
\end{lemma}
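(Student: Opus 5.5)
We split the argument into three steps: the family is bounded above, the envelope $U^*$ is again a subsolution, and the envelope attains the boundary values when $v_0=\phi$ on $\partial\Omega$.

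\emph{Step 1: upper bound.} Every $v\in S_{\phi,F}(\Omega)$ satisfies $v\le M:=\max_{\partial\Omega}\phi$ on $\Omega$. This is the maximum principle for the bounded psh function $v$, which has boundary limsup at most $\phi\le M$. Hence the family is uniformly bounded above and nonempty, since it contains $v_0$. By Corollary~\ref{cor of family}, $U^*$ is plurisubharmonic on $\Omega$, and $v_0\le U\le U^*\le M$, so $U^*\in\psh^\infty(\Omega)$. The boundary condition $\limsup_{z\to\zeta}U^*(z)\le\phi(\zeta)$ will be deduced from a harmonic-type majorant. I would use the maximal psh function $h_\phi$ with boundary values $\phi$, given by \cite[Theorem A]{GuedjGuenanciaZeriahi2023} with zero density. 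Then $(\ddc v)^n\ge 0=(\ddc h_\phi)^n$ and $v\le h_\phi$ on $\partial\Omega$, so Proposition~\ref{comparison} gives $v\le h_\phi$ for every $v$. Since $h_\phi\in C^0(\overline\Omega)$, it follows that $U^*\le h_\phi$ and $(U^*)^*\le\phi$ on $\partial\Omega$.

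\emph{Step 2: the envelope is a subsolution.} By Choquet's lemma there is a countable subfamily $(v_j)$ with $(\sup_j v_j)^*=U^*$. Replacing $v_j$ by $w_k:=\max(v_0,v_1,\dots,v_k)$, we obtain an increasing sequence converging a.e. to $U^*$. The key fact is that the maximum of two subsolutions is a subsolution. On $\{v_1>v_2\}$, which is plurifine open, the maximum agrees with $v_1$, and similarly for $v_2$. By the Bedford--Taylor maximum inequality, $(\ddc\max(v_1,v_2))^n\ge \mathbf 1_{\{v_1\ge v_2\}}(\ddc v_1)^n+\mathbf 1_{\{v_1<v_2\}}(\ddc v_2)^n$. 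Each term dominates $F(\max(v_1,v_2),x)\beta^n$ on its set. This is valid on $X_{\rm reg}$, and since both sides charge nothing on $X_{\rm sing}$, it holds globally. So $w_k\in S_{\phi,F}$. Because $w_k\nearrow U^*$ a.e., the Bedford--Taylor monotone convergence theorem gives $(\ddc w_k)^n\to(\ddc U^*)^n$ weakly. At the same time $F(w_k,x)\to F(U^*,x)$ a.e. by (F2), with domination by $g$ from (F3). For a nonnegative test function $\chi$, we then get $\int\chi(\ddc U^*)^n=\lim\int\chi(\ddc w_k)^n\ge\lim\int\chi F(w_k,x)\beta^n=\int\chi F(U^*,x)\beta^n$ by dominated convergence. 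Hence $U^*\in S_{\phi,F}$, so $U^*\le U$ and $U=U^*$. The main technical point is the max inequality on a singular space, which reduces to the regular part as indicated.

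\emph{Step 3: boundary values.} If $v_0=\phi$ on $\partial\Omega$, meaning that $v_0$ extends continuously with boundary value $\phi$ or at least $\liminf v_0\ge\phi$, then $v_0\le U\le h_\phi$. Taking $\liminf$ and $\limsup$ as $z\to\zeta$ gives $\phi(\zeta)\le\liminf U\le\limsup U\le h_\phi(\zeta)=\phi(\zeta)$, so $\lim_{z\to\zeta}U(z)=\phi(\zeta)$.
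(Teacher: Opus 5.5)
Your proof is correct and follows the paper's argument essentially step by step: Choquet's lemma, stability of $S_{\phi,F}$ under finite maxima via the local maximum inequality, Bedford--Taylor monotone convergence on the left with dominated convergence on the right, and squeezing $U$ between $v_0$ and a maximal majorant with boundary values $\phi$. The only cosmetic difference is how you get that majorant: you take the continuous solution $h_\phi$ of the homogeneous Dirichlet problem and compare via Proposition~\ref{comparison}, whereas the paper uses the Perron--Bremermann envelope $U_{\phi,0}$ from \cite[Lemma 3]{GuedjGuenanciaZeriahi2023}, for which $U\le U_{\phi,0}$ is immediate from the inclusion $S_{\phi,F}\subset S_{\phi,0}$.
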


\begin{proof}

\bigskip
Step 1: For any $u \in S_{\phi, F}(\Omega)$, since $u$ is plurisubharmonic, we have
$$ u\le M_{\phi}:=\max_{\partial\Omega}\phi. $$
Moreover, $u\ge v_0$. It follows that the envelope $U$ is a well-defined bounded function on $\Omega$, and that its upper semicontinuous regularization $U^*$ is a bounded plurisubharmonic function on $\Omega$  by Corollary \ref{cor of family}.

\bigskip
Step 2: We claim that
$
(\ddc U^*)^n \ge F( U^*,x)\beta^n
$
in the weak sense on $\Omega$.

By Choquet's lemma (see \cite[Lemma 4.31]{GuedjZeriahi2017}), there exists a countable subfamily $(u_j)\subset S_{\phi, F}(\Omega)$ such that 
$U^*=\left(\sup_j u_j\right)^*$ on $\Omega$.
For  $u_1,u_2 \in  S_{\phi, F}(\Omega)$, we set $w:=\max\{ u_1,u_2\}$. Then $w\in \psh^{\infty}(\Omega)$ and $w^*(\zeta)\le \phi(\zeta) $ for any $\zeta \in \partial \Omega$. It follows from the local maximum principle \cite[Corollary 3.28]{GuedjZeriahi2017} that
\begin{equation}
    (\ddc w)^n \ge 1_{\{u_1\ge u_2\}}(\ddc u_1)^n +1_{\{u_1 < u_2\}}(\ddc u_2)^n.
\end{equation}
Since  $u_1,u_2 \in  S_{\phi, F}(\Omega)$, we get $(\ddc u_1)^n \ge F(u_1,x)\beta^n$ and $(\ddc u_2)^n \ge F(u_2,x)\beta^n$.
Therefore, 
\begin{equation}
     (\ddc w)^n \ge  1_{\{u_1\ge u_2\}} F(u_1,x) \beta^n + 1_{\{u_1 < u_2\}}F(u_2,x)\beta^n = F(w,x)\beta^n.
\end{equation}
Hence, $w \in S_{\phi, F}(\Omega)$. By induction, any finite maximum of elements of $ S_{\phi, F}(\Omega)$ belongs to $ S_{\phi, F}(\Omega)$. For each $j\in \mathbb{N}$, let $v_j := \max \{ u_i; 1\le i \le j \}$. Then $(v_j)$ is a nondecreasing sequence in $S_{\phi, F}(\Omega)$, and $U^*=(\sup_j v_j)^*$. Set $v= \sup_j v_j$. 
By the Bedford--Taylor monotone convergence theorem,
\begin{equation}\label{LHS-LIMIT}
(\ddc v_j)^n \longrightarrow (\ddc v^*)^n=(\ddc U^*)^n
\qquad \text{weakly in } \Omega.
\end{equation}
Since each $v_j \in  S_{\phi, F}(\Omega)$, we have $ F(v_j,x)\le F(M_\phi,x)=:g_\phi(x)\in L^p(\Omega)$ for some $p>1$ and $(\ddc v_j)^n \ge F(v_j,x)\beta^n $ weakly in $\Omega$.
Since $\Omega\Subset X$, the measure $\beta^n$ is finite on $\Omega$. Since $v^*$ is the upper semicontinuous regularization of $v$, it follows from \cite[Proposition 1.40]{GuedjZeriahi2017} that $v=v^*$ a.e. on $\Omega$.
On the other hand, since $v_j\uparrow v$ pointwise on $\Omega$ and, for each fixed $x\in \Omega$, the map $t\longmapsto F(t,x)$
is continuous, we obtain $F(v_j,x)\to F(v,x)$ for every $x\in \Omega$.
Hence
\begin{equation}\label{RHS-LIMIT}
F(v_j,x)\longrightarrow F(v^*,x)
\quad \text{a.e. on } \Omega.
\end{equation}
 Therefore, by the dominated convergence theorem,
$F(v_j,\cdot)\rightarrow F(v^*,\cdot)$ in $ L^1(\Omega)$.
In particular, we obtain
\begin{equation}\label{limit-1}
F(v_j,x)\,\beta^n \longrightarrow F(v^*,x)\,\beta^n
\qquad \text{weakly as measures on } \Omega.
\end{equation}
Passing to the limit in \eqref{limit-1}, and using \eqref{LHS-LIMIT} and \eqref{RHS-LIMIT}, we obtain
$$
(\ddc U^*)^n \ge F(U^*,x)\beta^n
\qquad \text{weakly in } \Omega.
$$

\bigskip
Step 3: 
Let $U_{\phi} := U_{\phi,0}$ be the maximal plurisubharmonic function on $\Omega$ with boundary values $\le \phi$. Then, by \cite[Lemma 3]{GuedjGuenanciaZeriahi2023}, the envelope $U_{\phi}$ exists, satisfies $U_{\phi}=U_{\phi}^{*}$, and $\lim_{z\rightarrow \zeta} U_{\phi}(z)=\phi(z)$ for any $\zeta \in \partial \Omega$.
Since we have 
\begin{equation}\label{1-1}
    v_0 \le U \le U_{\phi} \quad \text{ on } \Omega
\end{equation}
and $U_{\phi}$ is upper semicontinuous, it follows 
$v_0 \le U^* \le U_{\phi}$ on $\Omega$. Then we have $U^*(\zeta)\le \phi(\zeta)$ for any $\zeta\in \partial \Omega$. It follows that $ U^*\in S_{\phi, F}(\Omega)$. Hence $U^* \le U$ and finally $U^*=U$ on $\Omega$. The boundary condition follows from \eqref{1-1}.
\end{proof}

\subsection{Stability estimate}---
 We record a stability estimate that will be used repeatedly below. It is a direct consequence of \cite[Proposition 1.8]{GuedjGuenanciaZeriahi2023}.

\begin{prop}\label{prop:stability}
Let $\Omega \Subset X$ be an open set, let $\beta$ be a smooth positive $(1,1)$-form on $X$, and let $u,v\in \psh^\infty(\Omega)$. Assume that
$$
(\ddc u)^n = F(u,x)\,\beta^n
\qquad \text{on } \Omega,
$$
where $F$ satisfies Assumption~\ref{ass:F}. Assume moreover that there exists a constant $M\in\mathbb{R}$ such that $u \le M$ on $\Omega$ and set
$g_M(x):=F(M,x)$.
If $g_M\in L^p(\Omega)$, then for any $0<\gamma<\frac{1}{nq+1}$, there exists a constant
$C=C\bigl(\gamma,\|g_M\|_{L^p(\Omega)}\bigr)>0$
such that
$$
\sup_{\Omega}(v-u)_+
\le
\sup_{\partial\Omega}(v-u)_+^*
+
C\|(v-u)_+\|_{L^1(\Omega)}^\gamma,
$$
where $q$ is the conjugate exponent of $p$, namely $\frac1p+\frac1q=1$.
\end{prop}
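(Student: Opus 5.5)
The plan is to reduce Proposition~\ref{prop:stability} to the stability estimate \cite[Proposition 1.8]{GuedjGuenanciaZeriahi2023}, which (in the form we use) states the following. If $u,v\in\psh^\infty(\Omega)$ and $(\ddc u)^n=f\,\beta^n$ with $f\in L^p(\Omega)$, $p>1$, then for every $0<\gamma<\frac{1}{nq+1}$
$$
\sup_\Omega (v-u)_+\le \sup_{\partial\Omega}(v-u)_+^* + C(\gamma,\|f\|_{L^p(\Omega)})\,\|(v-u)_+\|_{L^1(\Omega)}^{\gamma}.
$$
The dependence of the constant is only through an upper bound on $\|f\|_{L^p}$, together with $n,p,\gamma,\Omega,\beta$.

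Given this, the argument consists of two steps.

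\textbf{Step 1: freeze the density.} Set $f(x):=F(u(x),x)$. Since $u$ is Borel measurable and $F$ is jointly measurable by (F1), the function $f$ is measurable on $\Omega$. The equation $(\ddc u)^n = F(u,x)\beta^n$ then simply reads $(\ddc u)^n = f\,\beta^n$, a Monge--Amp\`ere equation with a fixed density that no longer depends on the unknown.

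\textbf{Step 2: bound the density uniformly.} Because $u\le M$ on $\Omega$ and $t\mapsto F(t,x)$ is nondecreasing by (F2), we have $0\le f(x)=F(u(x),x)\le F(M,x)=g_M(x)$. Here we need $M$ to lie in the domain $]-\infty,M_\phi]$ of $F$. If the given $M$ is larger, either one assumes $F$ is extended monotonically, or one replaces $M$ by $\min\{M,M_\phi\}$, which is harmless when $u\le M_\phi$, as is the case for solutions. Consequently $\|f\|_{L^p(\Omega)}\le \|g_M\|_{L^p(\Omega)}$.

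Applying the cited proposition with this $f$ gives the estimate, with a constant depending on $\|f\|_{L^p}$. It remains to check that this constant is monotone in the $L^p$ bound, so that it can be replaced by $C(\gamma,\|g_M\|_{L^p})$. This follows from how such constants arise: they come from the volume--capacity inequality $\int_K f\beta^n\le \|f\|_{L^p}\mathrm{Vol}(K)^{1/q}\le A\,\|f\|_{L^p}\,\mathrm{Cap}(K)^{1+\tau}$ and the De Giorgi-type iteration, both of which are monotone in $\|f\|_{L^p}$.

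The only delicate point is this uniformity of the constant. If the cited statement hides the dependence on $\|f\|_{L^p}$, the plan is to revisit its proof and check that $\|f\|_{L^p}$ enters only through the domination $f\beta^n\le A\,\mathrm{Cap}^{1+\tau}$. The rest of the argument is immediate.
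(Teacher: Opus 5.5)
Your proposal is correct and matches the paper's proof essentially verbatim. You freeze the density $f=F(u,x)$, use the monotonicity in $t$ together with $u\le M$ to get $0\le f\le g_M$ and hence $\|f\|_{L^p(\Omega)}\le\|g_M\|_{L^p(\Omega)}$, apply Proposition 1.8 of Guedj--Guenancia--Zeriahi, and absorb the dependence of the constant on $\|f\|_{L^p}$ into one depending on $\|g_M\|_{L^p}$. The paper does this last step simply by ``increasing $C$''; your extra remarks on the domain of $F$ and on why the constant is monotone in the $L^p$ bound address points the paper passes over silently, without changing the argument.
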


\begin{proof}
Define $f(x):=F(u(x),x)$.
Since $F$ is measurable in $x$ and continuous in the first variable, the function $f$ is measurable on $\Omega$. Moreover, since the map $t\mapsto F(t,x)$ is nondecreasing by Assumption~\ref{ass:F} and $u\le M$ on $\Omega$, we obtain for $x\in \Omega$
$$
f(x)=F(u(x),x)\le F(M,x)=g_M(x).
$$
Since $g_M\in L^p(\Omega)$, it follows that
$f\in L^p(\Omega)$ and 
$\|f\|_{L^p(\Omega)}
\le
\|g_M\|_{L^p(\Omega)}.$

On the other hand, the equation satisfied by $u$ can be rewritten as
$(\ddc u)^n=f\,\beta^n$ on $\Omega$.
Therefore, we can apply the stability estimate in \cite[Proposition 1.8]{GuedjGuenanciaZeriahi2023} for bounded plurisubharmonic functions whose Monge--Amp\`ere measure has an $L^p$ density. It yields that for every
$0<\gamma<\frac{1}{nq+1}$,
there exists a constant
$
C_0=C_0\bigl(\gamma,\|f\|_{L^p(\Omega)}\bigr)>0
$
such that
$$
\sup_{\Omega}(v-u)_+
\le
\sup_{\partial\Omega}(v-u)_+^*
+
C_0\|(v-u)_+\|_{L^1(\Omega)}^\gamma.
$$
Since $\|f\|_{L^p(\Omega)}\le \|g_M\|_{L^p(\Omega)}$, after increasing $C$ we write
$
C=C\bigl(\gamma,\|g_M\|_{L^p(\Omega)}\bigr).
$
\end{proof}

\subsection{A preliminary existence result via the balayage process}---
Combining \cite[Corollary 1.2]{Kolodziej2000} with \cite[Theorem A]{GuedjGuenanciaZeriahi2023}, we establish the existence of a solution in $\psh^{\infty}(\Omega)$ to the Dirichlet problem \eqref{main-eq-general}. The argument also serves to illustrate the balayage process, which will be used later in the proof of Theorem \ref{real main thm 1}.

\begin{lemma}\label{lem 2}
Let $\Omega \Subset X$ be a strongly pseudoconvex domain, and let $F$ satisfy Assumption~\ref{ass:F}. 
For $\phi \in C^0(\partial \Omega)$, there exists a unique function $w\in \psh^\infty (\Omega)$ such that 
    \begin{equation}\label{2-1}
        (\ddc w)^n = F(w,x)\beta^n \text{ on } \Omega, \quad \text{with } w|_{\partial \Omega}=\phi.
    \end{equation}
\end{lemma}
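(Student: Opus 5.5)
Uniqueness follows at once from Lemma~\ref{0-1} with $F_1=F_2=F$. If $w,w'$ are two bounded solutions with the same boundary values, applying the lemma twice gives $w\le w'$ and $w'\le w$. Here the boundary inequality is understood in the $\liminf$ sense of Proposition~\ref{comparison}, which is what the proof of Lemma~\ref{0-1} actually uses. So the work is existence, which I plan to obtain by a balayage/Perron argument built on Lemma~\ref{lem subsolution}.

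\emph{Step 1: a subsolution with the right boundary values.} Let $g:=F(M,\cdot)\in L^p(\Omega)$. By \cite[Theorem A]{GuedjGuenanciaZeriahi2023}, there is $v_0\in\psh(\Omega)\cap C^0(\overline\Omega)$ with $(\ddc v_0)^n=g\,\beta^n$ and $v_0=\phi$ on $\partial\Omega$. By the maximum principle $v_0\le M$. By (F2), $F(v_0,x)\le F(M,x)=g(x)$, so $(\ddc v_0)^n\ge F(v_0,x)\beta^n$ and $v_0\in S_{\phi,F}$. Lemma~\ref{lem subsolution} then shows that $U:=U_{\phi,F}\in S_{\phi,F}$, $U=U^*$, $v_0\le U\le U_\phi$, and $U$ attains $\phi$ continuously at $\partial\Omega$.

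\emph{Step 2: $U$ solves the equation (balayage).} It remains to prove $(\ddc U)^n\le F(U,x)\beta^n$; the reverse inequality is already known. This is local, and since $\beta^n$ and $(\ddc U)^n$ put no mass on $X_{\rm sing}$, it suffices to work on a small coordinate ball $B\Subset\Omega\cap X_{\rm reg}$. There I solve the local problem $(\ddc h)^n=F(h,x)\beta^n$ on $B$ with $h=U$ on $\partial B$. The datum $U|_{\partial B}$ is only usc and bounded, not continuous. I therefore approximate $U$ from above by continuous functions $\phi_k\downarrow U$ on $\partial B$, each with $\phi_k\le M$. For continuous data on the ball, solvability of the equation with right-hand side $F(u,x)$ follows from \cite[Corollary 1.2]{Kolodziej2000}, which handles $F(u,z)$ nondecreasing in $u$ and dominated by $L^p$ in a domain of $\mathbb C^n$. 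This gives continuous $h_k$. By Lemma~\ref{0-1}, the $h_k$ decrease; let $h:=\lim h_k$. Bedford--Taylor continuity along decreasing sequences, together with dominated convergence on $F(h_k,x)\le g$ using (F2) and (F3), shows that $h$ solves the equation on $B$. Lemma~\ref{0-1} applied on $B$ also gives $U\le h_k$, hence $U\le h$.

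Now define $\tilde U:=h$ on $B$ and $\tilde U:=U$ on $\Omega\setminus B$. The gluing lemma makes $\tilde U$ psh, since $U\le h$ on $B$ and $h$ tends to $U$ from above at $\partial B$ along the approximation. The local maximum principle \cite[Corollary 3.28]{GuedjZeriahi2017}, used as in Step~2 of Lemma~\ref{lem subsolution}, gives $(\ddc\tilde U)^n\ge F(\tilde U,x)\beta^n$. Hence $\tilde U\in S_{\phi,F}$, so $\tilde U\le U$ and therefore $h=U$ on $B$. Thus $(\ddc U)^n=F(U,x)\beta^n$ on every such $B$, hence on $\Omega\cap X_{\rm reg}$, hence on $\Omega$. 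Setting $w:=U$ gives the solution, with $w|_{\partial\Omega}=\phi$ by Step~1.

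\emph{Main obstacle.} The delicate point is the balayage with merely usc boundary data $U|_{\partial B}$. One issue is justifying the limit $h_k\downarrow h$ at the level of the nonlinear right-hand side. The other is checking that the glued function is psh and satisfies the subsolution inequality near $\partial B$. If the decreasing-approximation route causes trouble, I would instead work directly with the Perron envelope on $B$ for data $U$ and then invoke \cite[Corollary 1.2]{Kolodziej2000}, whose statement allows bounded boundary data in this form.
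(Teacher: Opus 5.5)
Your proof is correct and follows the paper's argument essentially step for step: the same subsolution solving $(dd^c v_0)^n=F(M,\cdot)\beta^n$ with $v_0=\phi$ on $\partial\Omega$, the envelope handled by Lemma~\ref{lem subsolution}, and the same balayage on coordinate balls. That balayage uses decreasing continuous approximants of the usc datum, Ko{\l}odziej's Corollary~1.2, the comparison Lemma~\ref{0-1}, monotone convergence plus dominated convergence, and then gluing and maximality of the envelope. The differences are minor. The paper first realizes each continuous approximant as the boundary value of a continuous psh function by solving $(dd^c\psi_j)^n=\beta^n$, to meet Ko{\l}odziej's hypotheses; on a ball this is automatic. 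Your extra truncation of the approximants at $M$, so that $F$ is only evaluated on $]-\infty,M]$, is a small point the paper leaves implicit.
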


\begin{proof}
By Assumption~\ref{ass:F}, there exists a function $g\in L^p(\Omega, \beta^n)$ for some $p>1$ such that
    $$
    0\le F(t,x)\le g(x)
    \qquad \text{for all } (t,x)\in ]-\infty,\max_{\partial\Omega}\phi]\times \Omega.
    $$
We first construct a subsolution to equation \eqref{2-1}.
By \cite[Theorem A]{GuedjGuenanciaZeriahi2023}, there exists a unique solution $\psi\in \psh(\Omega)\cap C^0(\overline{\Omega})$ to the Dirichlet problem $(\ddc \psi)^n = g\beta^n$ in $\Omega$, $\psi|_{\partial\Omega} = \phi$ on $\partial\Omega$. 
By the maximum principle, we have $\psi\le \max_{\partial\Omega}\phi$ on $\Omega$.
Then  we obtain
$$
(\ddc \psi)^n = g\,\beta^n
\ge F(\psi,x)\,\beta^n
\qquad \text{in } \Omega.
$$
Hence $\psi$ is a subsolution to the Dirichlet problem.

We consider the envelope of subsolutions $w:=\sup \{ v; v\in S\}$, where 
\begin{equation}
    S:= \{ v; v \in \psh^{\infty}(\Omega), (\ddc v)^n \ge F(v,x)\beta^n \text{ and } v|_{\partial \Omega} \le \phi \}.
\end{equation}
By the maximum principle, we have $w\le \max_{\partial\Omega}\phi$, while $w\ge \psi$ since $\psi \in S$. The Lemma \ref{lem subsolution} guarantees that $w$ is again a subsolution to the Dirichlet problem \eqref{2-1}
i.e. $(\ddc w)^n \ge F(w,x)\beta^n$ in $\Omega$ and $w=\phi$ on $\partial\Omega$.

It remains to show that $(\ddc w)^n = F(w,x)\beta^n$ in $\Omega_{\mathrm{reg}}$.
Fix a point $a\in \Omega_{\mathrm{reg}}$. Since $\Omega_{\mathrm{reg}}$ is a complex manifold near $a$, we can choose a relatively compact neighborhood $B\Subset \Omega_{\mathrm{reg}}$ of $a$ such that there exists a biholomorphism
$
T: V \to \mathbb{V}
$
from an open neighborhood $V$ of $\overline{B}$ onto an open neighborhood $\mathbb{V}$ of the closed Euclidean unit ball $\overline{\mathbb{B}}$ satisfying
$T(B)=\mathbb{B}.$
Since $w\in \psh^\infty(\Omega)$, the restriction $w|_{\partial B}$ is an upper semicontinuous function on the compact metric space $\partial B$, and is therefore bounded from above. Then there exists a decreasing sequence $(h_j)_{j\in \mathbb{N}}\subset C(\partial B)$ such that $h_j \downarrow w|_{\partial B}$ pointwise on $\partial B$(see  \cite[Exercise 1.1]{GuedjZeriahi2017}). For each $j\in \mathbb{N}$, it follows again from \cite[Theorem A]{GuedjGuenanciaZeriahi2023} that there exists a unique solution $\psi_j\in \psh(B)\cap C^0(\overline{B})$ to the Dirichlet problem:
\begin{equation*}
\left\{
\begin{aligned}
(\ddc \psi_j)^n &= \beta^n \qquad &&\text{in } B,\\
\psi_j|_{\partial B} &= h_j \qquad &&\text{on } \partial B.
\end{aligned}
\right.
\end{equation*}
Hence, for each $j\in \mathbb{N}$, the boundary datum
$h_j=\psi_j|_{\partial B}$
is admissible for \cite[Corollary 1.2]{Kolodziej2000}, since $\psi_j$ is plurisubharmonic on $B$ and continuous on $\overline{B}$. Then we consider the Dirichlet problem for $u_j\in \psh^{\infty}(B)$:
\begin{equation}\label{2-2}
\left\{
\begin{aligned}
 (\ddc u_j)^n& = F(u_j,x)\beta^n  &\text{in } \; B, \\
  \lim_{z\to\zeta}u_j(z)&=h_j(\zeta) &\text{for } \zeta \in \partial B.
\end{aligned}
\right.
\end{equation}
Using a result in \cite[Corollary 1.2]{Kolodziej2000}, we can find a solution $u_j \in \psh^{\infty}(B)$ to this problem. By the comparison principle Proposition \ref{comparison}, $w\le u_{j+1} \le u_j$ on $B$ for every $j$. Thus $(u_j)$ is a decreasing sequence in $\psh^\infty(B)$, bounded from below by function $\psi$. We set $u:=\lim_{j\to\infty} u_j$. Then $u\in \psh^{\infty}(B)$ and $\limsup_{z \to \zeta} u(z) \le w(\zeta)$ for $\zeta \in \partial B$. By the continuity of the complex Monge--Amp\`ere operator along decreasing plurisubharmonic sequences, 
$$
(\ddc u_j)^n \longrightarrow (\ddc u)^n
\quad \text{weakly on } B.
$$
Since $u_j\downarrow u$ pointwise on $B$ and, for each fixed $x\in B$, the map $t\longmapsto F(t,x)$ is continuous, we obtain $F(u_j,x)\to F(u,x)$ pointwise on $B$. Since $B\Subset \Omega$ and $p>1$, we have
$g\in L^p(B,\beta^n)\subset L^1(B,\beta^n)$. The dominated convergence theorem gives 
$$
F(u_j,\cdot)\longrightarrow F(u,\cdot) \quad \text{in } L^1(B,\beta^n). 
$$
Passing to the limit in $(\ddc u_j)^n = F(u_j,x)\beta^n$, we obtain
$(\ddc u)^n = F(u,x)\beta^n$ on $B$. Since $u_j \downarrow u$ and $w\le u_j$ on $B$ for every $j$, we obtain $w \le u$ on $B$.

We define a function $\widetilde{u}=u$ on $B$ and $\widetilde{u}=w$ on $\Omega\setminus  B$.
Then $\widetilde{u}$ is a subsolution to the Dirichlet problem \eqref{2-1}. Therefore $\widetilde{u}\le w$ on $\Omega$. This yields $\widetilde{u}=u=w$ on $B$. It follows that $(\ddc w)^n= F(w,x)\beta^n$ on $B$. Since $B\subset \Omega$ was arbitrary, we conclude that $(\ddc w)^n= F(w,x)\beta^n$ on $\Omega_{\mathrm{reg}}$. Hence,  $(\ddc w)^n= F(w,x)\beta^n$ on $\Omega$, because both measures vanish on $\Omega_{\mathrm{sing}}$. The uniqueness follows directly from comparison Lemma \ref{0-1}.
\end{proof}

\subsection{Existence of a solution}---
We now complete the proof of the main theorem. 

\begin{theorem}\label{real main thm 1}
    In Setting \ref{setting}, we have $S_{\phi, F}(\Omega)\neq \varnothing$ and its upper envelope 
\begin{equation}
    u:= U:=\sup \{ v; v\in S_{\phi, F}(\Omega)\}
\end{equation}
satisfies $u\in \psh(\Omega) \cap  C(\Omega)$. Moreover, $u$ extends continuously to $\overline{\Omega}$ and is the unique solution to the Dirichlet problem \eqref{main-eq-general}.
    
\end{theorem}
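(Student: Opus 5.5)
Let $g:=F(M,\cdot)\in L^p(\Omega)$ with $M=\max_{\partial\Omega}\phi$. By \cite[Theorem A]{GuedjGuenanciaZeriahi2023} there is $\psi\in\psh(\Omega)\cap C^0(\overline\Omega)$ solving $(\ddc\psi)^n=g\beta^n$ in $\Omega$ with $\psi=\phi$ on $\partial\Omega$. By the maximum principle $\psi\le M$, so monotonicity (F2) gives $(\ddc\psi)^n\ge F(\psi,x)\beta^n$. Hence $\psi\in S_{\phi,F}(\Omega)$, and this set is nonempty. Since $\psi=\phi$ on $\partial\Omega$, Lemma \ref{lem subsolution} shows that $U\in S_{\phi,F}(\Omega)$, that $U=U^*$, and that $\lim_{z\to\zeta}U(z)=\phi(\zeta)$ for every $\zeta\in\partial\Omega$. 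The function $U$ is exactly the function $w$ of Lemma \ref{lem 2}, so the balayage argument there gives $(\ddc U)^n=F(U,x)\beta^n$ on $\Omega$. Uniqueness among bounded plurisubharmonic solutions follows from Lemma \ref{0-1} with $F_1=F_2=F$. It remains to show that $U$ is continuous on $\Omega$; it then extends continuously to $\overline\Omega$ by the boundary limits above.

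\textbf{Continuity.} I will squeeze $U$ between two continuous functions with the same boundary values.
\begin{itemize}
\item Lower barrier: $\psi\le U$.
\item Upper barrier: let $h$ be the maximal function $U_\phi$ (continuous, by \cite[Lemma 3]{GuedjGuenanciaZeriahi2023} or Theorem A with zero density). Then $U\le h$.
\end{itemize}
Thus $h-\psi\ge U-\psi\ge0$. Since $h$ and $\psi$ are continuous on $\overline\Omega$ and agree on $\partial\Omega$, for every $\varepsilon>0$ there is a compact $K_\varepsilon\Subset\Omega$ with $U-\varepsilon\le\psi\le U$ on $\Omega\setminus K_\varepsilon$, and likewise $U\le h\le U+\varepsilon$ there. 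This gives uniform control of $U$ near the boundary.

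For the interior, I will run the standard argument via the stability estimate, as in \cite{GuedjGuenanciaZeriahi2023}. Since $X$ is Stein, we may work in local embeddings. Let $U_\delta$ be a regularization of $U$ (sup-convolution or local mollification in an embedding) satisfying:
\begin{itemize}
\item $U_\delta$ is plurisubharmonic on $\Omega_\delta=\{\rho<-\delta'\}$;
\item $U_\delta$ is continuous and decreases to $U$.
\end{itemize}
By Dini's argument, continuity of $U$ then reduces to showing $\sup_{\Omega_\delta}(U_\delta-U)_+\to0$. Proposition \ref{prop:stability}, with $u=U$ and $v=U_\delta$ on $\Omega_\delta$, gives
$$\sup_{\Omega_\delta}(U_\delta-U)_+\le\sup_{\partial\Omega_\delta}(U_\delta-U)_+ + C\|U_\delta-U\|_{L^1}^\gamma .$$
The $L^1$ term tends to $0$ by monotone convergence. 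The boundary term is small because near $\partial\Omega$ the function $U$ is within $\varepsilon$ of the continuous function $\psi$, and $U_\delta$ is within $\varepsilon$ of the regularization of $\psi$. Hence $U=\lim U_\delta$ is a uniform limit of continuous functions.

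\textbf{Main obstacle.} Regularization is not available globally on a singular space, since $U_\delta$ need not be plurisubharmonic near $X_{\rm sing}$. To avoid this, I would instead use \cite[Theorem A]{GuedjGuenanciaZeriahi2023} directly. The function $f:=F(U,x)$ is measurable with $f\le g\in L^p$, so Theorem A gives a solution $\tilde u\in\psh(\Omega)\cap C^0(\overline\Omega)$ of
$$(\ddc\tilde u)^n=f\beta^n \text{ in } \Omega,\qquad \tilde u=\phi \text{ on } \partial\Omega.$$
Both $U$ and $\tilde u$ are bounded plurisubharmonic functions with the same Monge--Amp\`ere measure $f\beta^n$ and the same boundary values $\phi$ (using the boundary limits of $U$ established above). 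The comparison principle, Proposition \ref{comparison}, applied in both directions gives $U=\tilde u$. Therefore $U$ is continuous on $\overline\Omega$. This bypasses regularization entirely, and I expect it to be the route actually taken.
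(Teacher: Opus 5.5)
Your final argument is correct, but it is not the route the paper takes.

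\textbf{Your route.} Lemma~\ref{lem 2} and Lemma~\ref{lem subsolution} already give you that the envelope $U$ is a bounded plurisubharmonic solution of $(\ddc U)^n=F(U,x)\beta^n$ with $\lim_{z\to\zeta}U(z)=\phi(\zeta)$. You then freeze the nonlinearity.
\begin{enumerate}
\item Set $f:=F(U,\cdot)$. It satisfies $0\le f\le F(M,\cdot)\in L^p$.
\item \cite[Theorem A]{GuedjGuenanciaZeriahi2023} gives $\tilde u\in\psh(\Omega)\cap C^0(\overline\Omega)$ with $(\ddc\tilde u)^n=f\beta^n$ and $\tilde u=\phi$ on $\partial\Omega$.
\item You apply Proposition~\ref{comparison} in both directions. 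The boundary hypotheses hold because $U$ and $\tilde u$ both tend to $\phi$ at $\partial\Omega$. This gives $U=\tilde u$, hence continuity up to $\overline\Omega$.
\end{enumerate}

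\textbf{The paper's route.} The paper proves continuity directly for the nonlinear problem, in three approximation steps.
\begin{enumerate}
\item For $\phi\in C^2$ and bounded $F(M,\cdot)$, it uses the explicit subsolution $A\rho+\phi$. It glues $U$ with this function outside $\Omega$, smooths the glued function by Forn\ae ss--Narasimhan, and concludes with the stability estimate.
\item It passes to continuous $\phi$ through $\phi_j-\frac1j\le\phi\le\phi_j$. Monotonicity of $F$ in $t$ makes $U_{\phi_j,F}-\frac1j$ a subsolution, which yields $0\le U_{\phi_j,F}-U_{\phi,F}\le\frac1j$.
\item It passes to $L^p$ bounds via the truncations $F_j=\min\{F,j\}$, a uniform $L^\infty$ bound, and the stability estimate again.
\end{enumerate}

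\textbf{Comparison.} The paper's scheme uses the linear Dirichlet theory only for smooth densities (in Lemma~\ref{0-1} and in the balayage), together with the a priori stability estimate. It therefore essentially re-derives the $L^p$ continuity in the nonlinear setting, and it exhibits $U$ as a uniform limit of the approximating solutions. Your argument is much shorter and takes the full $L^p$ case of Theorem A as a black box. This is legitimate here, since the paper already uses that case in Lemma~\ref{lem 2}, and the same freezing reduction opens Section~3.

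\textbf{What to drop.} Remove the regularization and squeeze sketch from the write-up. It is not needed. As you observed, it would also fail near $X_{\rm sing}$ without the resolution and the weight $\widetilde\rho_E$ that the paper introduces in Section~4.
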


\begin{proof}
We proceed in several steps.

\bigskip
Step 1: Assume first that $\phi \in C^2(\partial\Omega)$. Indeed, we choose an extension of the boundary datum to a neighborhood of $\overline{\Omega}$, still denoted by $\phi$, such that
$\max_{\overline{\Omega}}\phi=\max_{\partial\Omega}\phi.$
We then set
$
M_\phi:=\max_{\partial\Omega}\phi=\max_{\overline{\Omega}}\phi
$
and we also assume 
$
g_\phi(x):=F(M_\phi,x)\in L^{\infty}(\Omega).$ Up to replacing $\rho$ by a positive multiple of itself, we may assume that $\ddc \rho \ge \beta$. For $A > (2 ||g_{\phi}||_{\infty})^{\frac{1}{n}}$ large enough depending on $\|\phi\|_{C^2(\overline{\Omega})}$, the function $v_{\phi}=v_{\phi}^A:=A\rho+\phi$ is plurisubharmonic and smooth in a neighborhood of $\overline{\Omega}$ and 
\begin{equation}
    (\ddc v_{\phi})^n\ge \frac{1}{2}A^n\beta^n \ge g_{\phi}\beta^n =F(M_{\phi},x)\beta^n  \ge F(v_{\phi},x)\beta^n.
\end{equation}
Since $v_{\phi}=\phi$ on $\partial \Omega$, we have $v_{\phi} \in S_{\phi,F}$.
By Lemma \ref{lem subsolution}, the upper envelope $U$ of subsolutions is well defined and satisfies $(\ddc U)^n \ge F(U,x)\beta^n$ on $\Omega$ together with
$ \lim_{z\rightarrow \zeta} U(z) =\phi(z)$ for any  $\zeta \in \partial \Omega.$
The identity $(\ddc U)^n =F(U,x)\beta^n $ in $\Omega$ can be obtained by a balayage argument exactly as in the proof of Lemma \ref{lem 2}.

Let $\Omega'$ be a neighborhood of $\overline{\Omega}$ such that $v_\phi$ is defined and plurisubharmonic on $\Omega'$. Define
$$
V=
\begin{cases}
U & \text{on } \Omega,\\
v_\phi & \text{on } \Omega'\setminus \Omega.
\end{cases}
$$
Then $V$ is plurisubharmonic on $\Omega'$.
We can always assume that $\Omega'$ is a Stein space. By Forn\ae ss-Narasimhan approximation Theorem \cite[Theorem 5.5]{FornaessNarasimhan1980}, there exists a decreasing sequence $(V_j)$ of smooth plurisubharmonic functions on $\Omega'$ which converges to $V$ pointwise on $\Omega'$. We set $v_j :=V_j$ on $\overline{\Omega}$. Then $v_j \in \psh(\Omega)\cap C^0(\overline{\Omega})$ and $v_j\downarrow U$ pointwise on $\Omega$. By the monotone convergence theorem, $\|v_j-U\|_{L^1(\Omega)}\to 0$.

It follows from stability estimate in Proposition \ref{prop:stability} for $(\ddc U)^n= F(U,x)\beta^n$ that 
\begin{align}\label{2-3}
   \sup_{\Omega} (v_j -U) \le \sup_{\partial \Omega}(v_j -U)+ C||v_j-U||^\gamma_{L^1(\Omega)},
\end{align}
where $0<\gamma<\frac{1}{nq+1}$ and $C=C(\gamma, \max_{\partial\Omega}\phi , \|g_{\phi}\|_{L^p(\Omega)})>0$. Since $(v_j)$ decreasing to $U=\phi$ on $\partial \Omega$ and $\phi \in C^0(\partial\Omega)$, Dini's lemma ensures that $\sup_{\partial\Omega}|v_j-\phi|\to 0$. It follows from \eqref{2-3} that $(v_j)$ uniformly converges to $U$ on $\overline{\Omega}$, hence $U$ is continuous on $\overline{\Omega}$.

\bigskip
Step 2: Assume $\phi \in C^0(\partial \Omega)$ and $g_{\phi}=F(\max_{\partial\Omega} \phi,x) \in L^{\infty}(\Omega)$. For $t>\max_{\partial\Omega} \phi$, we extend $F$ by setting $F(t,x)=F(\max_{\partial\Omega} \phi,x)$. Take a decreasing sequence of smooth functions $(\phi_j)_{j\in \mathbb{N}}$ converging to $\phi$ on $\partial \Omega$ so that $\phi_j -\frac{1}{j} \le \phi \le \phi_j$ on $\partial\Omega$ for any $j\ge 1$. It follows from Step 1 that $U_{\phi_j,F}\in \psh(\Omega) \cap C^0(\overline{\Omega})$.
Since $(\ddc U_{\phi,F})^n \ge F(U_{\phi,F},x)\beta^n$  and $(\ddc U_{\phi_j,F})^n = F(U_{\phi_j,F},x)\beta^n$, by comparison Lemma \ref{0-1}, we have 
$  U_{\phi,F} \le U_{\phi_j,F}$ on $\overline{\Omega}$.
Since  $$[\ddc (U_{\phi_j,F}-\frac{1}{j})]^n = (\ddc U_{\phi_j,F})^n =F(U_{\phi_j,F},x)\beta^n \ge F(U_{\phi_j,F}-\frac{1}{j},x)\beta^n$$
and $U_{\phi_j,F}-\frac{1}{j} \le \phi$ on $\partial \Omega$, we obtain $(U_{\phi_j,F}-\frac{1}{j})\in S_{\phi, F}(\Omega) $ . It follows that $ U_{\phi_j,F}-\frac{1}{j} \le   U_{\phi,F}$ on $ \overline{\Omega}$. Hence,
\begin{equation}
  0\le  U_{\phi_j,F}- U_{\phi,F} \le \frac{1}{j}  , \quad \text{on } \overline{\Omega}.
\end{equation}
Then the nonincreasing sequence $(U_{\phi_j,F})$ uniformly converges to $U_{\phi, F}$ on $\overline{\Omega}$.
Hence, $U=U_{\phi,F}$ is continuous on $\overline{\Omega}$ and $U=\phi$ on $\partial \Omega$. By the continuity of the complex Monge--Amp\`ere operator along monotone sequences and the continuity of $F$ in its first variable, passing to the limit in $(\ddc U_{\phi_j,F})^n=F(U_{\phi_j,F},x)\beta^n$, we obtain $(\ddc U)^n=F(U,x)\beta^n$ on $\Omega$.

\bigskip
Step 3: Assume $\phi \in C^0(\partial \Omega)$ and $g_{\phi}=F(\max_{\partial\Omega} \phi,x) \in L^{p}(\Omega)$, with $p>1$. Let $M:=\max_{\partial\Omega}\phi$. For each $j\in \mathbb{N}$, define
$$
F_j(t,x):=\min\{F(t,x),j\},
\qquad (t,x)\in ]-\infty,M]\times\Omega.
$$
Then each $F_j$ satisfies Assumption~\ref{ass:F}, and moreover
$$
0\le F_j(t,\cdot) \le F_j(M,\cdot)=\min\{g_{\phi},j\}\in L^{p}(\Omega).
$$
The sequence $(F_j)$ increases pointwise to $F$ as $j\to +\infty$ on $]-\infty,M]\times\Omega$. Set
$U_j:=U_{\phi,F_j}$, $j\in\mathbb{N}$.
By comparison Lemma~\ref{0-1}, the sequence $(U_j)_{j\in\mathbb{N}}$ is decreasing, and each
$$
U_j:=U_{\phi,F_j}\in \psh(\Omega)\cap C^0(\overline{\Omega}).
$$
We first show that the functions $U_j$ are uniformly bounded. Let $v:=U_{\phi,0}$ denote the maximal plurisubharmonic function on $\Omega$ with boundary values  $\le \phi$. We recall that $v$ exists, satisfies $v=v^*$, and has the standard properties of the envelope constructed in \cite[Lemma 3]{GuedjGuenanciaZeriahi2023}.
Since $0\le F_j$,  comparison Lemma~\ref{0-1} yields $U_j\le v$ on  $\Omega$
for all $j\in\mathbb{N}$. On the other hand, by stability estimate in Proposition~\ref{prop:stability}, for every $j\in\mathbb{N}$,
$$
\|v-U_j\|_{L^{\infty}(\Omega)}
\le
C\,\|v-U_j\|_{L^1(\Omega)}^{\gamma},
$$
where $0<\gamma<\frac{1}{nq+1}$ and $C>0$ depends only on $\gamma$, $M$, and $\|g_{\phi}\|_{L^p(\Omega)}$. Since
$$
\|v-U_j\|_{L^1(\Omega)}
\le
\mathrm{Vol }\,(\Omega)\,\|v-U_j\|_{L^{\infty}(\Omega)},
$$
we obtain $\|v-U_j\|_{L^{\infty}(\Omega)}^{1-\gamma}\le C'$, 
where $C'>0$ is a uniform constant. Hence, 
$$\sup_{j}\|U_j\|_{L^{\infty}(\Omega)}<+\infty.
$$
Since $(U_j)$ is a decreasing sequence of continuous functions on $\overline{\Omega}$, it converges pointwise on $\overline{\Omega}$ to a bounded function $V$. Moreover,
$V|_\Omega$ is plurisubharmonic and $V|_{\partial \Omega} = \phi $.
Since $U_j$ decreasing to $V$, the continuity property of the complex  Monge--Amp\`ere operator along monotone sequences yields 
\begin{equation}
    (\ddc U_j)^n \to (\ddc V)^n \qquad \text{weakly on } \Omega.
\end{equation}

We now pass to the limit in the right-hand side. Since $U_j(x)\downarrow V(x)$ for every $x\in\Omega$ and, for each fixed $x\in\Omega$, the map $t\longmapsto F(t,x)$
is continuous, we have
$$
F(U_j(x),x)\longrightarrow F(V(x),x)
\qquad \text{for every } x\in\Omega.
$$
Hence, $F_j(U_j(x),x)=\min\{F(U_j(x),x),j\}\longrightarrow F(V(x),x)$ for every $x\in\Omega$.
Moreover, since $U_j\le v\le M$ on $\Omega$ and $t\mapsto F(t,x)$ is nondecreasing, we have
$$
0\le F_j(U_j,x)\le F(U_j,x)\le F(M,x)=g_{\phi}(x)
\qquad \text{on } \Omega.
$$
Because $g_{\phi}\in L^p(\Omega)$ with $p>1$, it follows that $g_{\phi}\in L^1(\Omega)$.
Therefore, by the dominated convergence theorem,
$F_j(U_j,\cdot)\longrightarrow F(V,\cdot)$ in $L^1(\Omega)$.
In particular, we obtain
$$
F_j(U_j,x)\,\beta^n\longrightarrow F(V,x)\,\beta^n
\qquad \text{weakly as measures on } \Omega.
$$
Passing to the limit in $(\ddc U_j)^n=F_j(U_j,x)\beta^n$,
we obtain
$$
(\ddc V)^n=F(V,x)\beta^n
\qquad \text{on } \Omega.
$$

Since $V\in S_{\phi,F}(\Omega)$, by the definition of the upper envelope $U$, we have
$V\le U$ on $\Omega$.
On the other hand, since $F_j \le F$, the function $U$ is a subsolution in $S_{\phi,F_j}(\Omega)$. Then we have $U\le U_j$ for every $j$. Passing to the limit as $j\rightarrow +\infty$, we obtain $U\le V$. Hence, $U=V$ on $\Omega$
and consequently
$(\ddc U)^n=F(U,x)\beta^n$ on $ \Omega$.

Since $U_j\downarrow U$ on $\Omega$ and the sequence $(U_j)$ is uniformly bounded, the dominated convergence theorem gives $\|U_j-U\|_{L^1(\Omega)}\longrightarrow 0$.
By stability estimate in Proposition~\ref{prop:stability}, there exists a constant $C>0$, depending only on $\gamma$, $M$, and $\|g_{\phi}\|_{L^p(\Omega)}$, such that
$$
\sup_{\Omega}(U_j-U)
\le
C\,\|U_j-U\|_{L^1(\Omega)}^{\gamma}
$$
for all $j\in\mathbb{N}$. It follows that $U_j\to U$ uniformly on $\Omega$. Since each $U_j$ is continuous on $\overline{\Omega}$ and $U_j|_{\partial\Omega}=\phi$,
we conclude that $U$ extends continuously to $\overline{\Omega}$ and satisfies $U|_{\partial\Omega}=\phi$. The uniqueness follows directly from comparison Lemma \ref{0-1}.
\end{proof}

\section{Regularity near the boundary}

For the rest of the paper, we continue to work under the assumptions of Setting~\ref{setting}. Let $u$ be the solution of \eqref{main-eq-general} obtained in Theorem~\ref{main thm}. Set $ M:=\max_{\partial\Omega}\phi. $ The maximum principle yields $u\le M$ on $\Omega$. Define
$
f(x):=F(u(x),x)
$
for $x\in\Omega$. By Assumption~\ref{ass:F}, we obtain $0\le f\le g_\phi\in L^p(\Omega)$ on  $\Omega$, and hence $f\in L^p(\Omega)$. Consequently, \eqref{main-eq-general} is reduced to the fixed-density Dirichlet problem
$$
MA(\Omega,\phi,f):
\left\{
\begin{aligned}
(\ddc u)^n &= f\,\beta^n
\qquad &&\text{in } \Omega,\\
u&=\phi
\qquad &&\text{on } \partial\Omega.
\end{aligned}
\right.
$$
For later use, we write $u(\Omega,\psi,h)$ for the unique solution of the corresponding fixed-density problem with boundary value $\psi$ and density $h$ with respect to $\beta^n$.

\subsection{Estimates near the regular boundary}---
We first recall the following result from \cite[Lemma 2.2]{CerqueiraGoncalves2026}. The idea comes from \cite[Lemma 2.2]{GuedjKolodziejZeriahi2008}.
\begin{lemma}
\label{lem-zero-boundary-barrier}
Let $\Omega\Subset X$ be as in Setting~\ref{setting}. Let
$0\le \hat{f}\in L^p(\Omega)$ for some $p>1$, and $\hat{f}$ is bounded near $\partial \Omega$.
There exists a barrier 
$b_{\hat{f}}\in \psh(\Omega)\cap C^0(\overline{\Omega})$ such that:
\begin{enumerate}
    \item $b_{\hat{f}}(\xi)=0, \forall \xi \in \partial \Omega$;
    \item  $b_{\hat{f}}\le u(\Omega,0,\hat{f}) \le 0$ and $(\ddc b_{\hat{f}})^n \ge \hat{f}\beta^n$ in $\Omega$;
    \item there exists a constant $C>0$ such that, for all $x,y\in \overline{\Omega}$, one has
$|b_{\hat{f}}(x)-b_{\hat{f}}(y)|
\le C\, d_\beta(x,y) .$
\end{enumerate}
\end{lemma}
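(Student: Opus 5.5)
The barrier will be built by solving an auxiliary Monge--Amp\`ere problem on a slightly larger strongly pseudoconvex domain, with a density that is bounded near the boundary of $\Omega$, in the spirit of \cite[Lemma 2.2]{GuedjKolodziejZeriahi2008}. Since $\hat f$ is bounded near $\partial\Omega$, there are $c_0<0$ and $K>0$ such that $\hat f\le K$ on $\{c_0<\rho<0\}$. As $\rho$ is strongly plurisubharmonic, after multiplying $\rho$ by a positive constant we may assume $\ddc\rho\ge\beta$ near $\overline\Omega$.

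\textbf{Construction.} Fix a slightly larger sublevel set $\Omega_\delta:=\{\rho<\delta\}\Subset\Omega'$. It is again strongly pseudoconvex and contains $\overline\Omega$. Extend $\hat f$ by zero outside $\Omega$ and replace it on the collar $\{c_0<\rho<\delta\}$ by the constant $K$. This gives a density $\tilde f\ge\hat f$ in $L^p(\Omega_\delta)$. By \cite[Theorem A]{GuedjGuenanciaZeriahi2023} there is $w\in\psh(\Omega_\delta)\cap C^0(\overline{\Omega_\delta})$ with $(\ddc w)^n=\tilde f\beta^n$ and $w=0$ on $\partial\Omega_\delta$.

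The key observation is that $w$ is continuous on a neighborhood of $\partial\Omega$. We then glue it with $A\rho$ near the boundary, setting
$$b:=\max\{A\rho,\; w-m\}\quad\text{on }\Omega,$$
where $m:=\max_{\partial\Omega}w$ is close to $w$ on $\partial\Omega$.

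This gluing alone does not directly give a Lipschitz function, so we use a cleaner route. On the collar, the function $A\rho$ satisfies $(\ddc A\rho)^n\ge A^n\beta^n\ge K\beta^n$ once $A^n\ge K$, and $A\rho$ is $C^2$, hence Lipschitz for $d_\beta$. In the interior $\{\rho\le c_0\}$ we need a Lipschitz psh function $v$ with $(\ddc v)^n\ge\hat f\beta^n$. This is the main obstacle, because $\hat f$ is only $L^p$ there.

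\textbf{Main obstacle: the interior piece.} The first thing I would try is to avoid Lipschitz regularity in the interior: set $v:=u(\Omega,0,\hat f)-N$, which is only continuous. Then take
$$b:=\max\{A\rho,\,v\},$$
with $A$ large so that $A\rho<v$ on $\{\rho\le c_0\}$, i.e.\ $A|c_0|>N+\|u(\Omega,0,\hat f)\|_\infty$. Near $\partial\Omega$ we arrange $v<A\rho$ by choosing $N>0$ and using $v\le -N<A\rho$ on $\{\rho>-N/A\}$, together with $-N/A>c_0$.

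The price is that $b$ is then Lipschitz only near $\partial\Omega$, not globally. Property (3) as stated requires global Lipschitz regularity, so I would instead lower the interior piece: replace $v$ by $\max\{A\rho, v\}$ with $v$ a Lipschitz function obtained by regularizing. The remaining task is to secure $(\ddc b)^n\ge\hat f\beta^n$ in the interior with Lipschitz $b$. This is possible only if we allow the interior to be handled by the concavity trick of \cite{GuedjKolodziejZeriahi2008}: use a family of smooth strictly psh functions $A\rho+\varepsilon$-perturbations whose Monge--Amp\`ere dominates $\hat f$ in the sense of measures only after truncation. I expect exactly this point — globally Lipschitz together with an $L^p$ Monge--Amp\`ere lower bound — to be the delicate step. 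I would settle it by citing \cite[Lemma 2.2]{CerqueiraGoncalves2026}, whose construction carries over verbatim.

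\textbf{Remaining properties.} With $b$ in hand, property (1) holds because $b=A\rho=0$ on $\partial\Omega$. For property (2), the local maximum principle \cite[Corollary 3.28]{GuedjZeriahi2017} gives $(\ddc b)^n\ge\hat f\beta^n$. Lemma~\ref{0-1}, applied with the constant function $F=\hat f$, then gives $b\le u(\Omega,0,\hat f)\le0$.
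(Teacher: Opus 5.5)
The paper does not prove this lemma; it only quotes \cite[Lemma~2.2]{CerqueiraGoncalves2026}, after \cite[Lemma~2.2]{GuedjKolodziejZeriahi2008}. So your final appeal to that citation matches the paper. But the step you defer to it is not merely delicate, it is impossible. Your argument therefore has a genuine gap that no citation can close: properties (2) and (3) together fail for general $\hat f\in L^p$ with $1<p<2$.

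To see this, let $b$ be psh near a regular interior point $a$ and work in a coordinate ball. Set $\omega:=\mathrm{osc}_{B(a,2r)}b$ and, for $t>0$, $v_t:=\sup_{B(a,2r)}b+\frac{(1+t)\omega}{3r^2}\bigl(|z-a|^2-4r^2\bigr)$. Then $v_t\ge b$ on $\partial B(a,2r)$ and $v_t<\inf_{B(a,2r)}b$ on $B(a,r)$, so Proposition~\ref{comparison} gives $\int_{B(a,r)}(\ddc b)^n\le\int_{B(a,2r)}(\ddc v_t)^n\le C_n(1+t)^n\omega^n$. If $b$ is $d_\beta$-Lipschitz, then $\omega\le Cr$, hence $\int_{B(a,r)}(\ddc b)^n\le C'r^n$. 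Now take $\hat f:=\chi\,|z-a|^{-2n+\epsilon}$ with $\chi$ a cutoff supported near $a$. Then $\hat f$ vanishes near $\partial\Omega$, lies in $L^p$ for $p<2n/(2n-\epsilon)$, and satisfies $\int_{B(a,r)}\hat f\,\beta^n\ge c\,r^{\epsilon}$. For $0<\epsilon<n$ this contradicts $(\ddc b)^n\ge\hat f\beta^n$ as $r\to0$. Every $p\in(1,2)$ is covered by choosing $\epsilon\in\bigl(2n(1-\frac1p),n\bigr)$. So no ``concavity trick'' produces a globally Lipschitz subsolution, and the statement with both (2) and (3) cannot come out of the cited construction ``verbatim''.

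What you did prove is correct and worth keeping. Take $A^n\ge K$ and $A|c_0|>N+\|u(\Omega,0,\hat f)\|_\infty$, and set $b=\max\{A\rho,\,u(\Omega,0,\hat f)-N\}$. This $b$ is continuous on $\overline\Omega$ and satisfies (1). It satisfies (2) by the local maximum principle, because $\{A\rho\ge u(\Omega,0,\hat f)-N\}\subset\{\rho>c_0\}$, where $\hat f\le K$. It coincides with $A\rho$ on the collar $\{\rho>-N/A\}$. Consequently $|b(x)-b(\xi)|\le C\,d_\beta(x,\xi)$ for all $x\in\overline\Omega$ and $\xi\in\partial\Omega$, which is Lipschitz control at the boundary, not global Lipschitz.

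Alternatively, take $\max\{A\rho,-M\}$ with $A^n\ge K$, $M\ge\|u(\Omega,0,\hat f)\|_\infty$ and $A|c_0|\ge M$. It is globally Lipschitz, vanishes on $\partial\Omega$, and lies below $u(\Omega,0,\hat f)$ by the comparison principle on $\{c_0<\rho<0\}$. But it is maximal in the interior, so it gives up $(\ddc b)^n\ge\hat f\beta^n$ there. This is the Guedj--Ko{\l}odziej--Zeriahi type of barrier, where only $b\le u$ and the boundary Lipschitz bound are used. You should say which of these two weakenings you are proving, rather than asserting (2) and (3) together on the strength of a citation.
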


\begin{lemma}\label{lem-homogeneous-boundary-barrier}
Let $\Omega\Subset X$ be as in Setting~\ref{setting}, and let
$\phi\in C^0(\partial\Omega)$. There exist barriers 
$h_\phi,h_{-\phi}\in \psh(\Omega)\cap C^0(\overline{\Omega})$ such that 
\begin{enumerate}
    \item $h_{\phi}(\xi)=\phi(\xi)=-h_{-\phi}(\xi), \forall \xi \in \partial \Omega$,
    \item     Set $u_\phi:=u(\Omega,\phi,0)$. Then $h_{\phi}\le u_{\phi} \le -h_{-\phi}$ in $\Omega$,
\item There exists a constant $C>0$ such that for all $x,y\in \overline{\Omega}$, one has
$$
|h_\phi(x)-h_\phi(y)|+
|h_{-\phi}(x)-h_{-\phi}(y)|
\le C\,\omega_\phi\!\left(d_\beta(x,y)^{1/2}\right),
$$
where
$\omega_\phi(t):=
\sup\left\{
|\phi(\xi)-\phi(\eta)|\ ;\
\xi,\eta\in\partial\Omega,\ d_\beta(\xi,\eta)\le t
\right\}.$
\end{enumerate}
\end{lemma}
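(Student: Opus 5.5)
We construct the two barriers as envelopes built from local peak-type plurisubharmonic functions attached to boundary points, following the classical Walsh/Bedford--Taylor construction; on singular spaces this is the route of \cite{GuedjGuenanciaZeriahi2023,CerqueiraGoncalves2026}.

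\textbf{Step 1: a uniform local barrier.} Fix the strongly plurisubharmonic defining function $\rho$ with $\ddc\rho\ge\beta$, available by Setting~\ref{setting}. For each $\xi\in\partial\Omega$ we aim to find $\ell_\xi\in\psh(\Omega)\cap C^0(\overline\Omega)$ with
\[
\ell_\xi(\xi)=0,\qquad \ell_\xi(x)\le -c\,d_\beta(x,\xi)^2 \ \text{ on } \overline\Omega,\qquad |\ell_\xi(x)-\ell_\xi(y)|\le C\,d_\beta(x,y),
\]
where $c,C$ are independent of $\xi$. Near a regular boundary point this is the usual $\rho$ minus the real part of a Levi polynomial. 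In general we embed a neighborhood of $\overline\Omega$ into $\mathbb C^N$; this is possible because $X$ is Stein. We then set $\ell_\xi(x):=A\rho(x)-\lambda|x-\xi|^2+\ldots$. Concretely, $\rho-\varepsilon|z-\xi|^2$ stays psh for small $\varepsilon$ since $\ddc\rho\ge\beta$ dominates the ambient form. We then subtract the value at $\xi$ and use $\rho\le0$. The resulting function is Lipschitz, and it satisfies $\ell_\xi\le-\varepsilon|x-\xi|^2$. Finally, local comparability of $d_\beta$ with the ambient Euclidean distance converts this into the bound in terms of $d_\beta$.

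\textbf{Step 2: the envelope.} For $\xi\in\partial\Omega$ and $t>0$, consider
\[
v_{\xi}(x):=\phi(\xi)-\omega_\phi(t)-K\,\frac{\omega_\phi(t)}{t^{2}}\,(-\ell_\xi(x))\quad\text{or more simply}\quad v_\xi:=\phi(\xi)-\omega_\phi(t)+B_t\,\ell_\xi ,
\]
with $B_t$ chosen so that $v_\xi\le\phi$ on $\partial\Omega$. When $d_\beta(\eta,\xi)\le t$ this holds by definition of $\omega_\phi$. When $d_\beta(\eta,\xi)>t$ it holds as soon as $B_t c t^2\ge 2\|\phi\|_\infty$. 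We set $h_\phi:=\sup_{\xi,t}\bigl(\max\{v_\xi,\ \min\phi\}\bigr)$, or rather the sup over $\xi$ of the optimized family. Each $v_\xi\le u_\phi$ by the comparison principle (Proposition~\ref{comparison}), since $(\ddc v_\xi)^n\ge0$. Hence $h_\phi\le u_\phi$. At $\xi$ we get $h_\phi(\xi)\ge\phi(\xi)-\omega_\phi(t)$ for all $t$, so $h_\phi=\phi$ on $\partial\Omega$. For $h_{-\phi}$ we apply the same construction to $-\phi$, and use $u_\phi\le -u_{-\phi}$, since $u_\phi+u_{-\phi}$ is psh with zero boundary values. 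This gives $u_\phi\le -h_{-\phi}$.

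\textbf{Step 3: modulus of continuity (the main obstacle).} Each $v_\xi$ is Lipschitz with constant $\lesssim B_t\sim \|\phi\|_\infty t^{-2}$, up to a constant. A supremum of such functions is Lipschitz with the same constant, while the error at the boundary is $\omega_\phi(t)$. The difficulty is balancing these so that $|h_\phi(x)-h_\phi(y)|\le C\omega_\phi(d^{1/2})$ with $d=d_\beta(x,y)$; the crude choice $B_t\sim t^{-2}$ loses too much. The plan is the standard trick: take, for each $t$, $h^t:=\sup_\xi v^t_\xi$, continuous with boundary values within $\omega_\phi(t)$ of $\phi$. Then compare translates via the boundary behaviour. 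For $x\in\Omega$ with $\delta=d_\beta(x,\partial\Omega)$, choose $t\simeq d^{1/2}$. Modifying $B_t$ to $\omega_\phi(t)/t^2$, possible because $\omega_\phi$ is subadditive so that $\omega_\phi(s)\le(1+s/t)\omega_\phi(t)$, makes the Lipschitz constant $\omega_\phi(t)/t^2$. The jump is then controlled by $\omega_\phi(t)\,d/t^2+\omega_\phi(t)\lesssim\omega_\phi(d^{1/2})$. One finally verifies that the sup over all $t$ does not destroy this: for pairs at distance $d$, only the members with $t\gtrsim d^{1/2}$ matter up to an error $\omega_\phi(d^{1/2})$. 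This is the delicate bookkeeping I expect to be the hardest part; if it fails, I fall back on citing the corresponding Lemma in \cite{CerqueiraGoncalves2026}, whose argument transfers verbatim once Step~1 is available.
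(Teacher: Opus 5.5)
Your proposal works, and it takes a genuinely different route from the paper, whose proof is essentially a citation. The paper transplants the Cerqueira-Gon\c{c}alves/Charabati construction: each boundary point gets one barrier, built in a local embedding $\iota_\xi$ from $\rho$ and $|\iota_\xi(x)-\iota_\xi(\xi)|^2$ with the modulus built in, so the $\omega_\phi(d_\beta^{1/2})$ bound is read off directly, and uniformity comes from a finite cover.

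You instead use one global embedding of a neighbourhood of $\overline\Omega$, which spares the patching. You take a single Lipschitz psh function $\ell_\xi$ with quadratic decay, and the family $v^t_\xi=\phi(\xi)-\omega_\phi(t)+K\omega_\phi(t)t^{-2}\ell_\xi$, affine in $\ell_\xi$, with a supremum over both $\xi$ and the scale $t$. Steps 1--2 are fine: with $B_t=K\omega_\phi(t)/t^2$ the boundary inequality needs only $Kc\ge1$ and $\omega_\phi(s)\le(1+s/t)\omega_\phi(t)$.

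The step you left open closes in a few lines, without scale-by-scale bookkeeping. Write $\sup_t v^t_\xi=\phi(\xi)+\Psi(-\ell_\xi)$, where
\[
\Psi(g):=\sup_{t>0}\Bigl[-\omega_\phi(t)\Bigl(1+\frac{Kg}{t^2}\Bigr)\Bigr],\qquad g\ge 0 .
\]
This $\Psi$ is convex and nonincreasing, with $\Psi(0)=0$ and $\Psi(h)\ge-(1+K)\,\omega_\phi(h^{1/2})$ (take $t=h^{1/2}$). Convexity gives $|\Psi(g)-\Psi(g')|\le\Psi(0)-\Psi(|g-g'|)$. Hence $|\Psi(-\ell_\xi(x))-\Psi(-\ell_\xi(y))|\le(1+K)\,\omega_\phi\bigl((L\,d_\beta(x,y))^{1/2}\bigr)$ uniformly in $\xi$, and your inequality for $\omega_\phi$ absorbs $L$. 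The same bound shows that $h_\phi$ is continuous, hence psh.

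This also shows what each route buys. After optimizing in $t$, your barrier is, up to two-sided constants, the composed barrier $\phi(\xi)-C\,\omega_\phi\bigl((-\ell_\xi)^{1/2}\bigr)$. Its plurisubharmonicity is automatic, since it is a convex nondecreasing function of $\ell_\xi$, and no concave majorant of $\omega_\phi$ is needed. Composing $\omega_\phi$ directly with a psh function does require concavity. The price is the extra supremum in $t$, whereas the cited route gets the modulus of a single barrier immediately.

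Both arguments use the same two inputs at singular boundary points. The first is subadditivity of $\omega_\phi$ for $d_\beta$ restricted to $\partial\Omega$. The second is the local bound $d_\beta\lesssim$ ambient distance behind $\ell_\xi\le-c\,d_\beta(\cdot,\xi)^2$, which the paper also invokes; only the reverse inequality is automatic.
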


\begin{proof}
The construction follows the boundary barrier argument in
\cite[Lemma~2.3]{CerqueiraGoncalves2026}, whose idea goes back to
\cite[Proposition~4.4]{Charabati2015}. The only modification is that a boundary
point $\xi\in\partial\Omega$ may belong to $X_{\rm sing}$. Thus the local
barrier is constructed in an ambient embedding
$\iota_\xi:U_\xi\hookrightarrow\mathbb C^{N_\xi}$, using the strictly
plurisubharmonic defining function $\rho$ and the squared ambient distance
$|\iota_\xi(x)-\iota_\xi(\xi)|^2$, rather than in a smooth coordinate chart of
$X_{\rm reg}$. Since, in the local embedding, the distance $d_\beta$ is locally comparable to
the ambient Euclidean distance, the same argument as in the smooth case gives
the estimate $C\omega_\phi(t^{1/2})$. By compactness of $\partial\Omega$,
the constants can be chosen uniformly after passing to a finite cover. Taking the
supremum of the local lower barriers gives $h_\phi$, and applying the same
construction to $-\phi$ gives $h_{-\phi}$. The stated inequalities and modulus
estimate follow.
\end{proof}

We now introduce a quantity which measures the local $\alpha$-H\"older control of the boundary datum $\phi$. Let
$
S:=X_{\rm sing}\cap\partial\Omega.
$
For $\lambda>0$, we set
$
\Gamma_\lambda
:=
\{\xi\in\partial\Omega:\ d_\beta(\xi,S)\ge \lambda\}.
$
Define
\begin{equation}\label{mathcal_H_phi}
\mathcal H_\phi(\lambda)
:=
1+
\sup_{\substack{\xi\in \Gamma_\lambda ,\eta\in\partial\Omega\\ \xi\ne\eta}}
\frac{|\phi(\xi)-\phi(\eta)|}{d_\beta(\xi,\eta)^\alpha}.
\end{equation}
In the case $S=\varnothing$, we interpret $H_\phi(\lambda)$ as the usual global
$\alpha$-Hölder constant of $\phi$ on $\partial\Omega$.
\begin{lemma}\label{lem:local-holder-boundary-constant}
Assume that $\phi$ is locally $\alpha$-H\"older continuous on
$\partial\Omega\cap X_{\rm reg}$. Then, for every compact set
$\Gamma\Subset \partial\Omega\cap X_{\rm reg}$, one has
$
\mathcal H_\phi(d_\beta(\Gamma,S))<+\infty .
$
\end{lemma}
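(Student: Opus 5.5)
Set $\lambda:=d_\beta(\Gamma,S)$. If $S=\varnothing$ there is nothing to do beyond the usual interpretation, so assume $S\neq\varnothing$. Since $\Gamma$ is compact and $S$ is compact (it is closed in the compact set $\partial\Omega$) and $\Gamma\cap S=\varnothing$, we have $\lambda>0$. Also $\Gamma\subset\Gamma_\lambda$. The set $\Gamma_\lambda=\{\xi\in\partial\Omega: d_\beta(\xi,S)\ge\lambda\}$ is itself a compact subset of $\partial\Omega\cap X_{\rm reg}$. So it suffices to bound the quotient $|\phi(\xi)-\phi(\eta)|/d_\beta(\xi,\eta)^\alpha$ uniformly for $\xi\in K:=\Gamma_\lambda$ and $\eta\in\partial\Omega$ with $\eta\ne\xi$.

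Split into a near case and a far case. By local H\"older continuity, each $\xi_0\in K$ has a neighborhood $W_{\xi_0}$ in $\partial\Omega\cap X_{\rm reg}$ on which $|\phi(\xi)-\phi(\eta)|\le C_{\xi_0}d_\beta(\xi,\eta)^\alpha$. Take a $d_\beta$-ball $B(\xi_0,2r_{\xi_0})\cap\partial\Omega\subset W_{\xi_0}$. By compactness, $K$ is covered by finitely many balls $B(\xi_i,r_i)$. Let $r:=\min_i r_i>0$ and $C_1:=\max_i C_{\xi_i}$.

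In the near case $d_\beta(\xi,\eta)<r$, we have $\xi\in B(\xi_i,r_i)$ for some $i$, and hence $\eta\in B(\xi_i,2r_i)$. Both points therefore lie in $W_{\xi_i}$, and the quotient is at most $C_1$.

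In the far case $d_\beta(\xi,\eta)\ge r$, use that $\phi\in C^0(\partial\Omega)$ is bounded on the compact set $\partial\Omega$. This gives a quotient at most $2\|\phi\|_{\infty}r^{-\alpha}$.

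Hence $\mathcal H_\phi(\lambda)\le 1+\max\{C_1,2\|\phi\|_\infty r^{-\alpha}\}<\infty$.

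The only delicate point is making the local H\"older neighborhoods uniform, so that the second point $\eta$ may range over all of $\partial\Omega$, including points near $S$. This is handled by the doubled radius in the near case and by the crude $L^\infty$ bound in the far case. No growth condition on the H\"older constants near $S$ is needed here.
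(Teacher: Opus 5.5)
Your proof is correct and uses essentially the same near/far splitting as the paper: a uniform H\"older bound when $d_\beta(\xi,\eta)$ is small, and the crude bound $2\|\phi\|_{C^0(\partial\Omega)}r^{-\alpha}$ otherwise. The only difference is that the paper assumes a uniform H\"older constant on the compact set $\Gamma_{\lambda/2}$ and uses the threshold $\lambda/2$ to keep $\eta$ inside that set, whereas your finite cover with doubled radii derives this uniformity explicitly from the pointwise definition of local H\"older continuity.
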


\begin{proof}
Set
$\lambda_\Gamma:=d_\beta(\Gamma,S)>0.$
Since $\Gamma_{\lambda_\Gamma/2}\Subset\partial\Omega\cap X_{\rm reg}$, the local
$\alpha$-H\"older continuity of $\phi$ gives a constant $C>0$ such that
$
|\phi(\xi)-\phi(\eta)|
\le C d_\beta(\xi,\eta)^\alpha
$
for all $\xi,\eta\in \Gamma_{\lambda_\Gamma/2}$. In particular, if
$\xi\in\Gamma_{\lambda_\Gamma}$ and
$d_\beta(\xi,\eta)<\frac{\lambda_\Gamma}{2}$, then
$\eta\in\Gamma_{\lambda_\Gamma/2}$, and hence the above estimate applies.
It remains to consider the case
$d_\beta(\xi,\eta)\ge \frac{\lambda_\Gamma}{2}$. Since $\phi$ is bounded on
$\partial\Omega$, we have
$$
|\phi(\xi)-\phi(\eta)|
\le 2\|\phi\|_{C^0(\partial\Omega)}
\le
2\|\phi\|_{C^0(\partial\Omega)}
\left(\frac{2}{\lambda_\Gamma}\right)^\alpha
d_\beta(\xi,\eta)^\alpha .
$$
Combining the two estimates, after increasing the constant $C>0$, we get
$
|\phi(\xi)-\phi(\eta)|
\le C d_\beta(\xi,\eta)^\alpha
$
for all $\xi\in\Gamma_{\lambda_\Gamma}$ and all $\eta\in\partial\Omega$ with
$\xi\ne\eta$. Therefore,
$\mathcal H_\phi(\lambda_\Gamma)<+\infty $.
\end{proof}

\begin{lemma}
\label{lem-local-holder-boundary-barrier}
Let $\Omega\Subset X$ be as in Setting~\ref{setting}, and let
$\phi\in C^0(\partial\Omega)$. Assume that $\phi$ is locally
$\alpha$-H\"older continuous on $\partial\Omega\cap X_{\rm reg}$.
Then for any compact set $\Gamma\Subset \partial\Omega\cap X_{\mathrm{reg}}$, there exist two barriers $h_{\phi,\Gamma},h_{-\phi,\Gamma} \in \psh(\Omega)\cap C^0(\overline{\Omega})$ such that the following properties hold. \begin{enumerate} \item $h_{\phi,\Gamma}\le \phi \le -h_{-\phi,\Gamma}$ on $\partial\Omega$, and $h_{\phi,\Gamma}(\xi)=\phi(\xi)=-h_{-\phi,\Gamma}(\xi)$, $ \forall\,\xi\in \Gamma .$ \item Set $u_\phi:=u(\Omega,\phi,0)$. Then $ h_{\phi,\Gamma}\le u_\phi\le -h_{-\phi,\Gamma}$ in $\Omega$.

\item
There exists a constant $C>0$, independent of $\Gamma$, such that, for all
$x,y\in\overline\Omega$,
$$
|h_{\phi,\Gamma}(x)-h_{\phi,\Gamma}(y)|
+
|h_{-\phi,\Gamma}(x)-h_{-\phi,\Gamma}(y)|
\le
C\mathcal H_\phi(d_\beta(\Gamma,S))
d_\beta(x,y)^{\alpha/2}.
$$
\end{enumerate}
\end{lemma}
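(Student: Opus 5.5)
The idea is to run the classical Charabati / Guedj--Ko{\l}odziej--Zeriahi barrier construction, as in Lemma~\ref{lem-homogeneous-boundary-barrier}, but to center local barriers only at points $\xi\in\Gamma$, and to control the relevant H\"older quotient by $K:=\mathcal H_\phi(d_\beta(\Gamma,S))$. By Lemma~\ref{lem:local-holder-boundary-constant}, $K$ is finite.

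\textbf{Local barriers.} Normalize $\rho$ so that $\ddc\rho\ge\beta$. Cover $\partial\Omega$ by finitely many embedding charts $\iota:U\hookrightarrow\mathbb C^N$, each with a slightly larger chart around it; this cover does not depend on $\Gamma$. In each chart, $d_\beta$ is comparable to the ambient distance. For $\xi\in\Gamma$ and $x\in\overline\Omega$, set
$$
\ell_\xi(x):=\phi(\xi)-K\bigl(A\,|\iota(x)-\iota(\xi)|^2-B\rho(x)\bigr)^{\alpha/2}
$$
near $\xi$. The constants $A,B$ depend only on the geometry, chosen as in \cite[Lemma~2.3]{CerqueiraGoncalves2026}. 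The bracket $\theta_\xi$ is plurisubharmonic and nonnegative on $\overline\Omega$. Since $\theta_\xi^{\alpha/2}$ is a concave increasing function of a plurisubharmonic function, $-\theta_\xi^{\alpha/2}$ is plurisubharmonic wherever $\theta_\xi>0$. If needed, we handle this instead by writing $-(\cdot)^{\alpha/2}$ via the standard trick $x\mapsto -|z|^{\alpha}$ on $\mathbb C^N$; this is the Charabati computation, and on $X$ it is performed in the ambient space and restricted.

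To get a function defined on all of $\overline\Omega$, glue with the constant $-\|\phi\|_\infty$. Take $L_\xi:=\max\{\ell_\xi,-\|\phi\|_\infty\}$ on $U$, and $L_\xi:=-\|\phi\|_\infty$ outside. This is plurisubharmonic once $\ell_\xi<-\|\phi\|_\infty$ near the edge of the chart. We arrange this by making $A$ large relative to $\|\phi\|_\infty$, with the edge distance a fixed constant $r_0$; using $K\ge1$, we can take $A\sim (2\|\phi\|_\infty)^{2/\alpha} r_0^{-2}$. On $\partial\Omega$ we have $\rho=0$, so $L_\xi(\eta)\le \phi(\xi)-K\,c\,d_\beta(\xi,\eta)^\alpha\le\phi(\eta)$, by the definition of $\mathcal H_\phi$ with $\xi\in\Gamma_{d_\beta(\Gamma,S)}$ and $\eta\in\partial\Omega$ arbitrary. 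Also $L_\xi(\xi)=\phi(\xi)$.

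\textbf{Envelope.} Set $h_{\phi,\Gamma}:=\sup_{\xi\in\Gamma}L_\xi$. Each $L_\xi$ is uniformly $\alpha/2$-H\"older with constant $CK$, independent of $\xi$ and $\Gamma$: the map $x\mapsto\theta_\xi(x)$ is Lipschitz with a uniform constant, and $t\mapsto t^{\alpha/2}$ is $\alpha/2$-H\"older. Hence the supremum is continuous, equal to its usc regularization, plurisubharmonic, and satisfies the estimate in (3). Property (1) follows from the two inequalities above. For (2), $h_{\phi,\Gamma}\le\phi$ on $\partial\Omega$ and $(\ddc u_\phi)^n=0$, so the comparison principle (Proposition~\ref{comparison}) gives $h_{\phi,\Gamma}\le u_\phi$. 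Applying the same construction to $-\phi$ gives $h_{-\phi,\Gamma}$; note $\mathcal H_{-\phi}=\mathcal H_\phi$. Then $-h_{-\phi,\Gamma}$ is a maximal-type upper barrier with $u_\phi\le -h_{-\phi,\Gamma}$. To see this, note that $u_\phi+h_{-\phi,\Gamma}$ is plurisubharmonic with boundary values $\le 0$, hence is $\le0$ by the maximum principle.

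\textbf{Main obstacle.} The delicate step is plurisubharmonicity of $\ell_\xi$ near boundary points lying in $X_{\rm sing}$ (inside the chart of some $\xi\in\Gamma$), together with ensuring that all constants stay independent of $\Gamma$. I would handle this by doing every computation in the ambient $\mathbb C^N$: extend $\rho$ locally to a strictly plurisubharmonic function there, so that $A|z-\iota(\xi)|^2-B\rho$ is an ambient function, and restrict. The restriction of an ambient plurisubharmonic function is plurisubharmonic on $X$ by definition. Independence of $\Gamma$ then comes from fixing the finite chart cover beforehand and only letting the centers $\xi$ vary.
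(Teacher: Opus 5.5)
Your overall strategy is workable, but the step everything rests on, plurisubharmonicity of the local barrier $\ell_\xi$, is justified by a false principle. With the constants arranged as you describe, it fails. For $a=\alpha/2$ and $\theta>0$,
\[
dd^c\bigl(-\theta^{a}\bigr)=a\,\theta^{a-1}\bigl(-dd^c\theta\bigr)+a(1-a)\,\theta^{a-2}\,d\theta\wedge d^c\theta .
\]
So $-\theta^{a}$ is plurisubharmonic when $\theta$ is plurisuperharmonic, not when $\theta$ is plurisubharmonic. The function $t\mapsto -t^a$ is convex decreasing, and a convex decreasing function of a psh function need not be psh.

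You make $\theta_\xi$ strictly plurisubharmonic by taking $A$ large. Then the first term is strictly negative, while the gradient term vanishes on every direction annihilated by $\partial\theta_\xi$. Already $\theta=|z|^2$, $a=\tfrac12$ gives $-|z|$, which is superharmonic. Your fallback does not help either: $|z|^\alpha=e^{\alpha\log|z|}$ is plurisubharmonic, so $-|z|^\alpha$ is plurisuperharmonic.

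The repair is to fix the constants in the opposite order.
\begin{enumerate}
\item Choose $A$ first. It must be large enough for the gluing near the chart edge, and for $A^{\alpha/2}|\iota(\eta)-\iota(\xi)|^\alpha\ge d_\beta(\xi,\eta)^\alpha$ on $\partial\Omega$.
\item Then choose $B$ so that $B\,dd^c\tilde\rho\ge A\,dd^c|z|^2$ on each chart. Here $\tilde\rho$ is the local ambient strictly plurisubharmonic extension of $\rho$.
\end{enumerate}
With this choice, $\theta_\xi$ is plurisuperharmonic in the ambient chart. Since $\rho\le 0$ on $\Omega\cap U$, we have $\theta_\xi\ge A|\iota(x)-\iota(\xi)|^2>0$ there, so $-\theta_\xi^{\alpha/2}$ is the restriction of an ambient psh function. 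The gluing estimate is unchanged, and $\theta_\xi=A|\iota(\eta)-\iota(\xi)|^2$ on $\partial\Omega$ regardless of $B$. The Lipschitz bound on $\theta_\xi$ depends only on $A$, $B$ and the fixed cover. The rest of your argument (equicontinuous envelope, comparison, independence of $\Gamma$) then goes through.

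So corrected, your proof is a different, more hands-on route than the paper's. The paper does not rebuild barriers. It replaces $\phi$ by the McShane-type minorant
\[
\psi_+(\eta)=\sup_{\xi\in\Gamma}\{\phi(\xi)-\mathcal H_\phi(d_\beta(\Gamma,S))\,d_\beta(\xi,\eta)^\alpha\},
\]
and $\psi_-$ for $-\phi$. This $\psi_+$ is globally $\alpha$-H\"older with constant $\mathcal H_\phi(d_\beta(\Gamma,S))$, lies below $\phi$ and equals it on $\Gamma$. The paper then takes $h_{\phi,\Gamma}:=h_{\psi_+}$ and $h_{-\phi,\Gamma}:=h_{\psi_-}$ from Lemma~\ref{lem-homogeneous-boundary-barrier}. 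That is shorter and treats the barrier lemma as a black box. Your version makes explicit why the constant is linear in $\mathcal H_\phi$ and independent of $\Gamma$.
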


\begin{proof}
Set
$\lambda_\Gamma:=d_\beta(\Gamma,S)$ and $A:=\mathcal H_\phi(\lambda_\Gamma).$
Since $\Gamma\subset \Gamma_{\lambda_\Gamma}$, the definition of
$\mathcal H_\phi$ gives
$|\phi(\xi)-\phi(\eta)|
\le
A\,d_\beta(\xi,\eta)^\alpha,$ for $
\xi\in\Gamma$ and $  \eta\in\partial\Omega .
$

Define, for $\eta\in\partial\Omega$,
$\psi_+(\eta)
:=
\sup_{\xi\in\Gamma}
\left\{
\phi(\xi)-Ad_\beta(\xi,\eta)^\alpha
\right\}.$
 Let $\eta,\eta'\in\partial\Omega$.
For every $\xi\in\Gamma$, since $0<\alpha\le 1$, we have 
$d_\beta(\xi,\eta)^\alpha
\le
d_\beta(\xi,\eta')^\alpha+d_\beta(\eta,\eta')^\alpha.$
Therefore
$$
\phi(\xi)-A d_\beta(\xi,\eta)^\alpha
\ge
\phi(\xi)-A d_\beta(\xi,\eta')^\alpha
-A d_\beta(\eta,\eta')^\alpha .
$$
Taking the supremum over $\xi\in\Gamma$ yields
$\psi_+(\eta)
\ge
\psi_+(\eta')-A d_\beta(\eta,\eta')^\alpha .$
Interchanging $\eta$ and $\eta'$, we obtain
 $|\psi_+(\eta)-\psi_+(\eta')|
\le
A d_\beta(\eta,\eta')^\alpha .$
Hence,
$\psi_+\in C^{0,\alpha}(\partial\Omega).$
If $\eta\in\partial \Omega$, then for every $\xi\in\Gamma$,
$$
  \phi(\xi)-A  d_\beta(\xi,\eta)^\alpha=    \phi(\xi)-\mathcal H_\phi(d_\beta(\Gamma, S))d_\beta(\xi,\eta)^\alpha
\le \phi(\eta).
$$
Taking the supremum over $\xi\in\Gamma$ gives $\psi_+(\eta)\le\phi(\eta)$ for $\eta \in \partial \Omega$.
Thus $\psi_+\le\phi$ on $\partial\Omega$. Moreover, if $\eta\in\Gamma$, then
choosing $\xi=\eta$ gives $\psi_+(\eta)\ge\phi(\eta)$. Hence $\psi_+=\phi$ on
$\Gamma$.

Similarly, define
$\psi_-(\eta):=
\sup_{\xi\in\Gamma}
\left\{
-\phi(\xi)-A d_\beta(\xi,\eta)^\alpha
\right\}$, for $ \eta\in\partial\Omega .$
Then $\psi_-\in C^{0,\alpha}(\partial\Omega)$, 
$\psi_-\le -\phi$ on $\partial\Omega$ and 
$\psi_-=-\phi$ on $\Gamma$ .

We now apply the boundary barrier construction from Lemma~\ref{lem-homogeneous-boundary-barrier}
to the globally H\"older continuous boundary data $\psi_+$ and $\psi_-$.  Set
$h_{\phi,\Gamma}:=h_{\psi_+}$ and
$h_{-\phi,\Gamma}:=h_{\psi_-}$ from Lemma~\ref{lem-homogeneous-boundary-barrier}.
Since $\psi_+\le\phi$ on $\partial\Omega$, the comparison principle  gives
$$
h_{\phi,\Gamma}=h_{\psi_+}
\le u(\Omega,\psi_+,0)
\le u(\Omega,\phi,0)=u_\phi
\qquad\text{in } \Omega .
$$
Similarly,  we have
$
h_{-\phi,\Gamma}=h_{\psi_-}
\le u(\Omega,-\phi,0)$ in $\Omega$.
On the other hand, the maximum principle gives
$u(\Omega,\phi,0)+u(\Omega,-\phi,0)\le 0$
in $\Omega .
$
Consequently,
$$
u_\phi=u(\Omega,\phi,0)
\le
-u(\Omega,-\phi,0)
\le
-h_{-\phi,\Gamma}
\qquad\text{in } \Omega .
$$
Thus
$h_{\phi,\Gamma}\le u_\phi\le -h_{-\phi,\Gamma}$
in $\Omega .$
Finally, since $\psi_+=\phi$ and $\psi_-=-\phi$ on $\Gamma$, we obtain
$
h_{\phi,\Gamma}(\xi)=\phi(\xi)=-h_{-\phi,\Gamma}(\xi)$
 for $\xi\in\Gamma $.
Also, because $\psi_+\le\phi$ and $\psi_-\le-\phi$ on $\partial\Omega$, we have
$h_{\phi,\Gamma}\le\phi\le -h_{-\phi,\Gamma}$ on $ \partial\Omega.$
The modulus of continuity in Lemma~\ref{lem-homogeneous-boundary-barrier} for $h_{\psi_+}$ and $h_{\psi_-}$ yield, 
for all $x,y\in\overline{\Omega}$,
\begin{equation*}
|h_{\phi,\Gamma}(x)-h_{\phi,\Gamma}(y)|
+
|h_{-\phi,\Gamma}(x)-h_{-\phi,\Gamma}(y)|
\le
C \mathcal H_\phi(d_\beta(\Gamma, S)) d_\beta(x,y)^{\alpha/2}. \qedhere
\end{equation*}
\end{proof}

The following proposition is the main result of this section. It gives a local H\"older continuity for the solution $u$ near $\partial\Omega\cap X_{\rm reg}$.
\begin{prop}\label{near boundary K}
Let $u$ be the solution given by Theorem~\ref{main thm}. Assume that
$\phi$ is locally $\alpha$-H\"older continuous on
$\partial\Omega\cap X_{\rm reg}$. Let
$K\Subset \overline{\Omega}\cap X_{\rm reg}$
and assume that
$K\cap\partial\Omega\neq\varnothing$.
Then there exist constants $C_0>0$ and $\delta_{K}>0$ such that for every
$x,y\in K$ and every $0<s<\delta_{K}$ satisfying
$
d_\beta(x,\partial\Omega)<s, d_\beta(y,\partial\Omega)<s, d_\beta(x,y)<s,
$
one has
$$
|u(x)-u(y)|
\le
C_{0}  \mathcal H_\phi\left(\frac{1}{2}d_\beta(K\cap\partial\Omega,S)\right)s^{\alpha/2}.
$$
Moreover, for every $x\in K$ satisfying
$
d_\beta(x,\partial\Omega)<\delta_{K}
$,
there exists
$
\xi\in\Gamma_{\frac12 d_\beta(K\cap\partial\Omega,S)}
$
such that
$
d_\beta(x,\xi)=d_\beta(x,\partial\Omega)
$.
\end{prop}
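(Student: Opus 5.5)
The plan is to sandwich $u$ between two barriers that agree with $\phi$ on a compact piece of the regular boundary. We then transfer their H\"older modulus to $u$ at boundary points and propagate it to nearby interior points.

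\textbf{Setting up the compact piece.} Set $\lambda:=d_\beta(K\cap\partial\Omega,S)>0$; this is positive since $K\cap\partial\Omega$ is a compact subset of $\partial\Omega\cap X_{\rm reg}$. Let $\Gamma:=\Gamma_{\lambda/2}$, a compact subset of $\partial\Omega\cap X_{\rm reg}$. Then $d_\beta(\Gamma,S)\ge\lambda/2$. Since $\mathcal H_\phi$ is nondecreasing in its argument, we get $\mathcal H_\phi(d_\beta(\Gamma,S))\le \mathcal H_\phi(\lambda/2)$, which is finite by Lemma~\ref{lem:local-holder-boundary-constant}.

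\textbf{Choosing $\delta_K$.} First choose $\delta_K\le\lambda/4$ small. If $x\in K$ and $d_\beta(x,\partial\Omega)<\delta_K$, a nearest boundary point $\xi$ exists by compactness. Moreover $x$ lies within $\delta_K$ of $K\cap\partial\Omega$: this follows by compactness of $K$, shrinking $\delta_K$ if necessary, since points of $K$ close to $\partial\Omega$ are close to $K\cap\partial\Omega$. Hence $d_\beta(\xi,S)\ge\lambda-2\delta_K\ge\lambda/2$, i.e.\ $\xi\in\Gamma_{\lambda/2}$. This gives the second assertion.

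\textbf{Sandwich for $u$.} Let $b:=b_{\hat f}$ be the Lipschitz barrier of Lemma~\ref{lem-zero-boundary-barrier}. Here we must handle $f=F(u,\cdot)$, which need not be bounded near $\partial\Omega$. I would try to take $\hat f=g_\phi$ truncated, or else use the Lipschitz barrier only through the comparison $u\ge u(\Omega,\phi,f)$. The safer route is to note that $u(\Omega,0,f)$ is itself controlled near the regular boundary: by \cite{GuedjKolodziejZeriahi2008}-type arguments, i.e.\ the cited lemma applied after modifying the density, possibly using the stability estimate, Proposition~\ref{prop:stability}. Assuming such a barrier $b$ with $b\le u(\Omega,0,f)$, $b=0$ on $\partial\Omega$, $b$ Lipschitz, the comparison principle (Proposition~\ref{comparison}) gives
\[
h_{\phi,\Gamma}+b\;\le\; u\;\le\; u_\phi\;\le\; -h_{-\phi,\Gamma}.
\]
Indeed, $h_{\phi,\Gamma}+b$ is a subsolution with boundary values $\le\phi$, and $u\le u_\phi$ since $f\ge0$. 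By Lemma~\ref{lem-local-holder-boundary-barrier}, both outer functions equal $\phi(\xi)$ at $\xi\in\Gamma$ and have modulus $C\mathcal H_\phi(\lambda/2)d_\beta^{\alpha/2}$. Therefore
\[
|u(z)-\phi(\xi)|\le C\mathcal H_\phi(\lambda/2)\,d_\beta(z,\xi)^{\alpha/2}+C\,d_\beta(z,\xi)
\]
for $\xi\in\Gamma$ and $z\in\overline\Omega$.

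\textbf{Conclusion.} Given $x,y$ as in the statement, pick the nearest points $\xi_x,\xi_y\in\Gamma$. Then $d_\beta(x,\xi_x)<s$ and $d_\beta(y,\xi_x)\le d_\beta(y,x)+d_\beta(x,\xi_x)<2s$. Applying the estimate above at the single boundary point $\xi_x$ to both $x$ and $y$ gives
\[
|u(x)-u(y)|\le |u(x)-\phi(\xi_x)|+|u(y)-\phi(\xi_x)|\le C_0\mathcal H_\phi(\lambda/2)s^{\alpha/2}.
\]
The linear term is absorbed because $s<\delta_K\le1$ and $\mathcal H_\phi\ge1$.

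\textbf{Main obstacle.} The main obstacle is the lower barrier for the density part, since Lemma~\ref{lem-zero-boundary-barrier} requires $\hat f$ bounded near $\partial\Omega$. I would try first to replace $b$ by $u(\Omega,0,f)$ itself and bound $|u(\Omega,0,f)(z)|$ near $\partial\Omega$ via Proposition~\ref{prop:stability} on small boundary neighbourhoods, accepting a worse exponent if needed.
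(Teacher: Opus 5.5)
\textbf{The gap.} The gap is the step you flag yourself. Everything after ``assuming such a barrier $b$'' is conditional, and none of the routes you suggest produces $b$.
\begin{itemize}
\item Truncating the density goes the wrong way. Comparison gives $u(\Omega,0,\min\{f,j\})\ge u(\Omega,0,f)$, so a barrier for the truncated problem need not lie below $u(\Omega,0,f)$.
\item Estimating $u(\Omega,0,f)$ on boundary collars via Proposition~\ref{prop:stability} yields a rate depending on $p$, not $s^{\alpha/2}$, as you concede.
\end{itemize}
Two smaller points. Knowing only $b\le u(\Omega,0,f)$ does not make $h_{\phi,\Gamma}+b$ a subsolution. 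You should instead argue $u_\phi+w\le u$ with $w=u(\Omega,0,f)$, using $(dd^c(u_\phi+w))^n\ge(dd^c w)^n$ and comparison. Also, in choosing $\delta_K$, compactness gives $d_\beta(x,K\cap\partial\Omega)<\lambda/4$ rather than $<\delta_K$; this is still enough.

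\textbf{How the paper fills the step.} The paper enlarges the domain. On $\widehat\Omega=\{\rho<\varepsilon\}\Supset\Omega$ it extends $f$ by zero, so the density vanishes near $\partial\widehat\Omega$. It takes the barrier $\widehat b$ of Lemma~\ref{lem-zero-boundary-barrier} on $\widehat\Omega$. It then compensates for $\widehat b|_{\partial\Omega}\neq0$ by applying Lemma~\ref{lem-local-holder-boundary-barrier} to $\psi:=\phi-\widehat b|_{\partial\Omega}$. This makes $v_K=\widehat b+h_{\psi,\Gamma_{r_K/2}}$ a subsolution equal to $\phi$ on $\Gamma_{r_K/2}$. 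The rest (the upper barrier $-h_{-\phi,\Gamma}$ and the nearest-point argument) is exactly as in your proposal.

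\textbf{This device does not remove your obstacle.} Your suspicion that the exponent must degrade is justified. Points of $\partial\Omega$ are interior to $\widehat\Omega$. No function that is Lipschitz near $\xi\in\partial\Omega$ can satisfy $(dd^c\widehat b)^n\ge f\beta^n$ near $\xi$ if $f$ is too singular there. For instance, take $n=1$ and $f\ge c|x-\xi|^{-a}$ on $\Omega$ with $1<a<2$. Jensen's formula then makes the mean of $\widehat b$ over $\{|x-\xi|=r\}$ exceed $\widehat b(\xi)$ by $\gtrsim r^{2-a}\gg r$.

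\textbf{The statement fails for general $L^p$ densities.} Take $X=\mathbb C$, $\Omega=\mathbb D$, $\phi=0$ (so $\alpha=1$), and $F(t,x)=|x-1|^{-a}$ with $3/2<a<2/p$. Fix $t$ small, set $z=1-t$, and let $W_t=\{1-re^{i\theta}:\ t\le r\le 2t,\ |\theta|\le\pi/4\}$.
\begin{itemize}
\item The Green function of the disc satisfies $g(z,w)\ge\frac{(1-|z|^2)(1-|w|^2)}{4\pi|1-\overline{w}z|^2}\ge\frac{1}{36\pi}$ for $w\in W_t$.
\item The sector $W_t$ has area $\frac{3\pi}{4}t^2$, and $f\ge(2t)^{-a}$ on it.
\item Hence $|u(1-t)|\ge c\,t^{2-a}$.
\end{itemize}
The proposition with $x=1-t$, $y=1$, $s=2t$ would instead give $|u(1-t)|\le C\,t^{1/2}$ for some finite $C$, which is impossible since $2-a<1/2$.

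\textbf{What would repair it.} An $s^{\alpha/2}$ bound needs an extra hypothesis. One option is $f$ bounded near $\partial\Omega$, as in \cite{GuedjKolodziejZeriahi2008}. Then a Lipschitz $b\le u(\Omega,0,f)$ with $b|_{\partial\Omega}=0$ exists, and your sandwich goes through. Otherwise the exponent has to be lowered to $\min\{\alpha/2,\gamma\}$, with $\gamma=\gamma(n,p)$ coming from a boundary estimate for $u(\Omega,0,f)$.
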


\begin{proof}
Set
$r_K:=d_\beta(K\cap\partial\Omega,S)>0$.
We first claim that there exists $\lambda_K>0$ such that if
$x\in K$ and
$d_\beta(x,\partial\Omega)<\lambda_K$, 
then every nearest boundary point $\xi\in\partial\Omega$ satisfying
$d_\beta(x,\xi)=d_\beta(x,\partial\Omega)$
belongs to $\Gamma_{r_K/2}$.
Suppose not. Then there exist sequences $x_j\in K$ and
$\xi_j\in\partial\Omega\setminus\Gamma_{r_K/2}$ such that
$
d_\beta(x_j,\xi_j)
=
d_\beta(x_j,\partial\Omega)
\rightarrow 0
$. 
Passing to a subsequence, we may assume that
$x_j\rightarrow x_\infty\in K$.
Since $d_\beta(x_j,\xi_j)\to0$, we also have
$\xi_j\rightarrow x_\infty$. 
As $\partial\Omega$ is closed, it follows that
$x_\infty\in K\cap\partial\Omega$.
Hence, 
$d_\beta(x_\infty,S)\ge r_K.$
On the other hand, since $\xi_j\notin\Gamma_{r_K/2}$, we have
$d_\beta(\xi_j,S)<\frac12 r_K$.
Passing to the limit gives
$d_\beta(x_\infty,S)\le\frac12 r_K$.
This is a contradiction. Therefore the claim is proved.

We next obtain a boundary estimate for $u$ along $\Gamma_{r_K/2}$. Since $\Omega$ is strongly pseudoconvex,
after shrinking the neighborhood where the defining function $\rho$ is defined,
we may choose a strongly pseudoconvex domain $\widehat\Omega$ such that
$
\Omega\Subset \widehat\Omega\Subset X.
$
For instance, if $\Omega=\{\rho<0\}$, then one may take
$\widehat\Omega:=\{\rho-\varepsilon<0\}$
for $\varepsilon>0$ sufficiently small.

Extend $f$ by zero to $\widehat\Omega$ and denote the extension by $\widehat f$.
 Let $\widehat b$ be the lower barrier for
$u(\widehat\Omega,0,\widehat f)$ provided by
Lemma~\ref{lem-zero-boundary-barrier}. Thus
$
\widehat b\le u(\widehat\Omega,0,\widehat f)\le 0$ and $
(\ddc \widehat b)^n\ge \widehat f\,\beta^n$ in 
$ \widehat\Omega,$
and $\widehat b$ is Lipschitz near $\overline\Omega$.

Set
$\psi:=\phi-\widehat b|_{\partial\Omega}$.
Since $\widehat b$ is Lipschitz and $\phi$ is locally $\alpha$-H\"older
continuous on $\partial\Omega\cap X_{\rm reg}$, the function $\psi$ is locally
$\alpha$-H\"older continuous on $\partial\Omega\cap X_{\rm reg}$. Since $\widehat b$ is Lipschitz near $\overline\Omega$ and $\mathcal H_\phi\ge 1$,
we have, after increasing the constant,
$$
\mathcal H_\psi(r_K/2)\le C \mathcal H_\phi(r_K/2).$$
 Applying
Lemma~\ref{lem-local-holder-boundary-barrier} to $\psi$ and the compact set
$\Gamma_{r_K/2}$, we obtain a barrier
$
h_{\psi,\Gamma_{r_K/2}}\in\psh(\Omega)\cap C^0(\overline\Omega)
$
such that
$$
h_{\psi,\Gamma_{r_K/2}}\le\psi
\quad\text{on }\partial\Omega
\, \text{, and } \,
h_{\psi,\Gamma_{r_K/2}}(\xi)=\psi(\xi)
\quad
\text{ for } \xi\in\Gamma_{r_K/2}.
$$
And 
$|h_{\psi,\Gamma_{r_K/2}}(z)-h_{\psi,\Gamma_{r_K/2}}(\eta)|
\le
C \mathcal H_\phi\left(\frac{r_K}{2}\right) d_\beta(z,\eta)^{\alpha/2} $
for all $z,\eta\in\overline\Omega$.

Define
$
v_K:=\widehat b|_{\Omega}+h_{\psi,\Gamma_{r_K/2}}
$.
Then, 
$v_K\in\psh(\Omega)\cap C^0(\overline\Omega)$.
On $\partial\Omega$, we have
$
v_K
=
\widehat b+h_{\psi,\Gamma_{r_K/2}}
\le
\widehat b+\psi
=
\phi$.
For every $\xi\in\Gamma_{r_K/2}$, we also have
$v_K(\xi)=\phi(\xi).$
Since $h_{\psi,\Gamma_{r_K/2}}$ is plurisubharmonic, we get
$
(\ddc v_K)^n
\ge
(\ddc\widehat b)^n
\ge
f\beta^n
=
(\ddc u)^n
$
in $\Omega$. The comparison principle gives $v_K\le u$
in $\Omega$.

We now apply Lemma~\ref{lem-local-holder-boundary-barrier} to $\phi$ and
$\Gamma_{r_K/2}$. It gives a barrier $h_{-\phi,\Gamma_{r_K/2}}$ such that
$$
u_\phi\le -h_{-\phi,\Gamma_{r_K/2}}
\, \text{ in } \Omega \,\text{ and }
-h_{-\phi,\Gamma_{r_K/2}}(\xi)=\phi(\xi)
\text{ for }
\xi\in\Gamma_{r_K/2},
$$
where $u_\phi:=u(\Omega,\phi,0)$.
Since
$(\ddc u)^n=f\beta^n\ge0=(\ddc u_\phi)^n$
and $u=u_\phi=\phi$ on $\partial\Omega$, the comparison principle gives
$u\le u_\phi$
in $\Omega$. Hence
$v_K\le u\le -h_{-\phi,\Gamma_{r_K/2}}$
in $\Omega$, and both barriers agree with $\phi$ on $\Gamma_{r_K/2}$.
By the Lipschitz estimate for $\widehat b$ and the H\"older estimates from
Lemma~\ref{lem-local-holder-boundary-barrier}, after increasing the constant $C$, one has 
$$
|v_K(z)-v_K(\eta)|
+
|h_{-\phi,\Gamma_{r_K/2}}(z)-h_{-\phi,\Gamma_{r_K/2}}(\eta)|
\le
C \mathcal H_\phi\left(\frac{r_K}{2}\right) d_\beta(z,\eta)^{\alpha/2}
$$
for all $z,\eta\in\overline\Omega$. Hence, for every
$z\in\overline\Omega$ and every $\xi\in\Gamma_{r_K/2}$, we have
$|u(z)-u(\xi)|
\le
C \mathcal H_\phi\left(\frac{r_K}{2}\right)d_\beta(z,\xi)^{\alpha/2}$.
Therefore, for every $\delta>0$, if $x,y\in \overline{\Omega}$ and $\xi\in\Gamma_{r_K/2}$ satisfy
$d_\beta(x,\xi)<\delta$ and 
$d_\beta(y,\xi)<\delta$,
then 
\begin{equation}\label{middle_result}
    |u(x)-u(y)|\le |u(x)-u(\xi)|+|u(y)-u(\xi)| \le 
 2C  \mathcal H_\phi\left(\frac{r_K}{2}\right)\delta^{\frac{\alpha}{2}}.
\end{equation}

Set $\delta_K:=\frac12\lambda_K$.
Let $x,y\in K$ and $0<s<\delta_K$ satisfy
$d_\beta(x,\partial\Omega)<s, d_\beta(y,\partial\Omega)<s, d_\beta(x,y)<s$.
Choose $\xi\in\partial\Omega$ such that
$d_\beta(x,\xi)=d_\beta(x,\partial\Omega)$.
By the first claim, we have
$\xi\in\Gamma_{r_K/2}$
and $d_\beta(x,\xi)<s$.
By the triangle inequality,
$d_\beta(y,\xi)
\le
d_\beta(y,x)+d_\beta(x,\xi)
<2s$.
Thus
$$
|u(x)-u(y)|
\le
|u(x)-u(\xi)|+|u(y)-u(\xi)|
\le
\left(C \mathcal H_\phi\left(\frac{r_K}{2}\right)\right)(s^{\alpha/2}+(2s)^{\alpha/2})
$$
Let $C_0:=(1+2^{\frac{\alpha}{2}})C$, we obtain
$|u(x)-u(y)|
\le
C_0  \mathcal H_\phi\left(\frac{r_K}{2}\right)  s^{\alpha/2}$.
The final assertion follows from the first claim, because
$\delta_K<\lambda_K$.
\end{proof}

\subsection{Peak sets and plurisubharmonic barriers}\label{peak function}---
Peak functions are classical tools in several complex variables. We use here the
usual notion of a peak set for a holomorphic function algebra; see
\cite[p.~385, Section~0]{Noell1986}. For peak functions at boundary points, see
also \cite[p.~555]{BedfordFornaess1978}.
\begin{definition}
Let $E\subset  \partial\Omega$ be a compact set. We say that $E$ admits a
holomorphic peak function if there exists an open neighborhood $U$ of
$\overline\Omega$ and a function $P\in\mathcal O(U)$ such that
$$
P=1 \quad \text{on } E \,
\text{ and } \,
|P|<1 \quad \text{on } \overline\Omega\setminus E.
$$
In this case, $E$ is called a holomorphic peak set for $\Omega$.
\end{definition}

\begin{prop}
\label{prop:peak-function-barrier}
Let $E$ be a compact holomorphic peak set for $\Omega$. 
Then,  $\forall K\Subset\overline\Omega\setminus E$, there exist positive constants
$c_0$, $c_K$ and $\lambda_K$ such that, for every
$0<\lambda<\lambda_K$, there exists a function
$\psi_\lambda\in{\rm PSH}(\Omega)\cap C^0(\overline\Omega)$ satisfying
$$
-1\le \psi_\lambda\le 0
\;
\text{ on } \Omega,\quad
\psi_\lambda\le -c_0
\;
\text{ on } \Omega\cap N_\lambda E,\quad 
\psi_\lambda\ge -c_K\lambda
\;
\text{ on } K\cap\Omega,
$$
where
$N_\lambda E:=\{x\in X:\ d_\beta(x,E)<\lambda\}.$
\end{prop}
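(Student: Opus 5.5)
The plan is to build $\psi_\lambda$ as a pluriharmonic function of the peak function. Write $h:=1-P$, which is holomorphic on a neighborhood $U$ of $\overline\Omega$. Since $|P|\le 1$ on $\overline\Omega$, we have $\mathrm{Re}\,h\ge 1-|P|\ge 0$ there. Moreover $h=0$ exactly on $E$. For a parameter $\mu>0$, set
$$
\psi_\lambda:=-\,\mathrm{Re}\,\frac{\mu}{h+\mu},\qquad \mu:=L\lambda ,
$$
where $L$ is a local Lipschitz constant of $P$, to be fixed below. Since $\mathrm{Re}(h+\mu)\ge\mu>0$ on $\overline\Omega$, the function $\mu/(h+\mu)$ is holomorphic on a neighborhood of $\overline\Omega$. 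Hence $\psi_\lambda$ is pluriharmonic there, in particular plurisubharmonic on $\Omega$ and continuous on $\overline\Omega$. Note that it is the real part of a holomorphic function on $X$, so it is psh in the sense of the definition via local embeddings.

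Next I will verify the three inequalities by elementary estimates on the half-plane $\{\mathrm{Re}\,\zeta\ge 0\}$. With $\zeta=h$ one computes
$$
\mathrm{Re}\frac{\mu}{\zeta+\mu}=\frac{\mu(\mathrm{Re}\,\zeta+\mu)}{|\zeta+\mu|^2}.
$$
\begin{enumerate}
\item \emph{Range.} This quantity is $\ge 0$. It is also $\le 1$, because $|\zeta+\mu|^2\ge(\mathrm{Re}\,\zeta+\mu)^2\ge\mu(\mathrm{Re}\,\zeta+\mu)$. This gives $-1\le\psi_\lambda\le 0$.
\item \emph{Near $E$.} If $|\zeta|\le\mu$, then $|\zeta+\mu|\le 2\mu$ and $\mathrm{Re}\,\zeta+\mu\ge\mu$, so the quantity is $\ge 1/4$. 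Thus $\psi_\lambda\le -c_0$ with $c_0=1/4$ wherever $|1-P|\le L\lambda$.
\item \emph{On $K$.} Compactness of $K\Subset\overline\Omega\setminus E$ and $|P|<1$ there give $a_K:=\min_K(1-|P|)>0$. Then $\mathrm{Re}\,\zeta\ge a_K$ on $K$, and the quantity is $\le \mu/(\mathrm{Re}\,\zeta+\mu)\le \mu/a_K$. So $\psi_\lambda\ge -L\lambda/a_K=:-c_K\lambda$ on $K\cap\Omega$.
\end{enumerate}

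The remaining step, which is the main technical point, is to pass from the condition $|1-P|\le L\lambda$ to the metric neighborhood $N_\lambda E$. I will cover the compact set $E$ by finitely many local embeddings $\iota:U_i\hookrightarrow\mathbb C^{N_i}$. In each of them, $P$ is the restriction of a holomorphic, hence locally Lipschitz, ambient function, and $d_\beta$ is comparable to the ambient Euclidean distance, as already used in the proof of Lemma~\ref{lem-homogeneous-boundary-barrier}. This yields $L>0$ and $\lambda_0>0$ such that $|P(x)-1|=|P(x)-P(e)|\le L\,d_\beta(x,e)$ whenever $e\in E$ and $d_\beta(x,e)<\lambda_0$. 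For $x\in\Omega\cap N_\lambda E$, I pick $e\in E$ with $d_\beta(x,e)<\lambda$, which gives $|h(x)|\le L\lambda=\mu$. Taking $\lambda_K:=\lambda_0$ then completes the argument, with constants $c_0=1/4$ (independent of $K$) and $c_K=L/a_K$.

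I do not expect a genuine obstacle beyond this Lipschitz comparison near possibly singular points of $E$. The point to note is that a function of $\mathrm{Re}\,h$ or $|P|$ alone cannot work. A convex increasing function $\chi$ of $-\log|P|$ that is $\le 0$, $\le -c_0$ near $0$ and $\ge -c\lambda$ at $a_K$ is incompatible with convexity. The two-dimensional Poisson-kernel-type function $-\mathrm{Re}\,\mu/(\zeta+\mu)$ on the right half-plane avoids this, because its "harmonic measure" decay $\mu/\mathrm{Re}\,\zeta$ produces exactly the linear factor $\lambda$ on $K$.
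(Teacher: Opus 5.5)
Your proposal is correct and is essentially the paper's proof: the paper uses the same function $\psi_\lambda=-\mathrm{Re}\bigl(2A\lambda/(2A\lambda+1-P)\bigr)$, where $A$ is the Lipschitz constant of $1-P$ near $E$, together with the same three half-plane estimates. The only difference is the normalization $\mu=2A\lambda$ instead of your $\mu=L\lambda$, which gives $c_0=4/9$ rather than your $c_0=1/4$.
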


\begin{proof}
Let $P$ be a holomorphic peak function for $\Omega$, and set
$H:=1-P.$
Since  $|P|\le 1$ on $\overline\Omega$, we have
${\rm Re}\,H=1-{\rm Re}\,P\ge0$
on $\overline{\Omega}$.
Moreover, since $P$ is holomorphic in a neighborhood of $\overline\Omega$ and
$P=1$ on $E$, there exist constants $A>0$ and $\varepsilon_0>0$ such that
$$
|H(x)|=|1-P(x)|
\le
A d_\beta(x,E)\quad \text{ for }d_\beta(x,E)<\varepsilon_0.
$$
For $0<\lambda\ll1$, define
$$
\psi_\lambda(x)
:=
-{\rm Re}
\left(
\frac{2A\lambda}{2A\lambda+H(x)}
\right)= -
\frac{2A\lambda(2A\lambda+{\rm Re}\left(H(x)\right) }{|2A\lambda+H(x)|^2}.
$$
Since the denominator $2A\lambda+H$ does not
vanish on $\Omega$, one has
$\psi_\lambda\in{\rm PSH}(\Omega)\cap C^0(\overline\Omega).$

Write $H=u+iv$, with $u={\rm Re}\,H\ge0$. Then
$
-1\le \psi_\lambda
=
\frac{-2A\lambda(2A\lambda+u)}
{(2A\lambda+u)^2+v^2} <0$ on $ \Omega$.
If $x\in\Omega\cap N_\lambda E$ and $\lambda<\varepsilon_0$, then
$|H(x)|
\le
A\lambda$.
Hence
$|2A\lambda+H(x)|
\le 3A\lambda$
while
$2A\lambda+{\rm Re}\,H(x)\ge 2A\lambda.$
Therefore
$$
\psi_\lambda(x)
=-
\frac{2A\lambda(2A\lambda+{\rm Re}\,H(x))}
{|2A\lambda+H(x)|^2}
\le
-\frac{4A^2\lambda^2}{9A^2\lambda^2}
=
-\frac49\qquad
\text{on } \Omega\cap N_\lambda(E).
$$
Thus we may take $c_0=4/9$.
It remains to estimate $\psi_\lambda$ on $K$. Since
$K\Subset\overline\Omega\setminus E$ and $|P|<1$ on
$\overline\Omega\setminus E$, there exists $b_K>0$ such that
${\rm Re}\,H\ge b_K$
on $K$.
Therefore, on $K\cap\Omega$,
$$
 \psi_\lambda
\ge -
\left|
\frac{2A\lambda}{2A\lambda+H}
\right|
\ge
\frac{-2A\lambda}{{\rm Re}(2A\lambda+H)}
\ge -
\frac{2A}{b_K}\lambda.
$$
Taking $c_K=\frac{2A}{b_K}$, 
the proof is complete.
\end{proof}

\begin{prop}
\label{prop:finite-singular-boundary-peak}
Assume that $S=X_{\rm sing}\cap\partial\Omega=\{a_1,\ldots,a_m\}$
is finite. Then $S$ is a holomorphic peak set for $\Omega$.
\end{prop}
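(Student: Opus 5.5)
We will construct, for each point $a_i\in S$, a holomorphic function $P_i$ on a neighborhood of $\overline\Omega$ with $P_i(a_i)=1$ and $|P_i|<1$ on $\overline\Omega\setminus\{a_i\}$, i.e.\ a peak function at the single point $a_i$. Then we combine them. A convenient combination is to first arrange that the $P_i$ also separate the points, and then use interpolation on the Stein space $X$ (Cartan's Theorem B). Concretely, choose holomorphic $Q$ on $X$ with $Q(a_i)=1$ for all $i$; this is possible since finite sets are analytic subsets of a Stein space. Then set
\[
P:=\frac1m\sum_{i=1}^m P_i^{k_i}\cdot(\text{correction}),
\]
or more simply $P:=\frac1m\sum_i P_i$. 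The simple average already works: each $|P_i|\le1$ on $\overline\Omega$, so $|P|\le1$. At any $a_j$ we need $P(a_j)=1$, which requires $P_i(a_j)=1$ for all $i$. This fails for single-point peak functions, so instead we take $P:=\frac1m\sum_i R_i$ with $R_i:=P_i\,G_i$, where the $G_i$ are arranged so that each $R_i$ equals $1$ on all of $S$. A cleaner route is to note that we need a single $h$ with $|h|<1$ off $S$ and $h=1$ on $S$. Taking $h:=\frac1m\sum_i \widetilde P_i$, where each $\widetilde P_i$ peaks exactly on $S$, reduces to the single-function problem. So the real plan is the following: build a peak function at each point $a_i$, then glue them multiplicatively.

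For the point peak functions, we use the strongly pseudoconvex exhaustion $\rho$ in a local embedding $\iota:U\hookrightarrow\mathbb C^N$ near $a_i$. The function $\rho$ is the restriction of a $C^2$ strictly psh function $\tilde\rho$ on $\mathbb C^N$, and its Levi polynomial
\[
L_i(z)=\sum_k \partial_k\tilde\rho(a_i)(z_k-a_{i,k})+\tfrac12\sum_{k,l}\partial_k\partial_l\tilde\rho(a_i)(z_k-a_{i,k})(z_l-a_{i,l})
\]
satisfies $\operatorname{Re}L_i(z)\ge \rho(z)-\rho(a_i)+c|z-a_i|^2$ near $a_i$ by strict plurisubharmonicity. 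Hence $\operatorname{Re}L_i<0$ on $\overline\Omega\cap U\setminus\{a_i\}$ near $a_i$, so $e^{L_i}$ is a local peak function at $a_i$. To globalize, we solve a $\bar\partial$-problem, or more elementarily use that $\overline\Omega$ has a Stein neighborhood basis $\{\rho<\varepsilon\}$ where $\overline\Omega$ is $\mathcal O$-convex. I expect this globalization to be the main obstacle. My first attempt is the classical approach: take $\chi e^{L_i}$ with a cutoff $\chi$, correct by solving $\bar\partial v=e^{L_i}\bar\partial\chi$ with sup-norm bounds on a slightly larger strongly pseudoconvex Stein neighborhood. Singular spaces complicate this, so the alternative I will actually pursue avoids $\bar\partial$ entirely. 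Since $X$ is Stein, $\overline\Omega$ is holomorphically convex in $\{\rho<\varepsilon\}$, and we may apply the Oka--Weil approximation theorem. We approximate, uniformly on $\overline\Omega\cup\overline{B}$, the function equal to $e^{tL_i}$ near $a_i$ and $0$ near $\overline\Omega\setminus B$; this function is holomorphic near the holomorphically convex compact set formed by the disjoint union of a small neighborhood of $a_i$ and $\overline\Omega\setminus B$, which is $\mathcal O$-convex in a Stein space after a standard argument. We also interpolate exactly the value $1$ at $a_i$ and the first-order jet via Theorem B. A final squaring/averaging argument, $g\mapsto \frac{1+g}{2}$ iterated, then converts approximate peaking into a genuine strict inequality.

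Given point peak functions $P_1,\dots,P_m$, we set $P:=\frac{1}{m}\sum_i P_i\cdot\prod_{j\ne i}\frac{1+P_j}{2}\cdot c_i$, adjusted to equal $1$ at each $a_j$. Simpler still, we avoid all adjustment by applying the Oka--Weil/Theorem B step simultaneously at all $a_i$ with one approximant equal to $e^{tL_i}$ near each $a_i$. This directly yields $P$ with $P(a_i)=1$, $|P|<1$ elsewhere on $\overline\Omega$, proving that $S$ is a holomorphic peak set.
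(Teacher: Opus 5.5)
There is a genuine gap in the globalization step. You correctly single it out as the main obstacle, but the Oka--Weil route you choose does not overcome it.

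First, the claim that $\overline{B'}\cup(\overline\Omega\setminus B)$, with $B'\Subset B$ neighborhoods of $a_i$, is $\mathcal O$-convex \emph{after a standard argument} is false in general. Take the unit ball $\overline{\mathbb B}\subset\mathbb C^2$, $p=(1,0)$, $B=B(p,r)$ and $B'=B(p,r')$ with $r'<r$. For $\max\{r^2/2,r'\}<s<r$, the slice $\{z_1=1-s\}$ meets $\overline{\mathbb B}\setminus B$ in the annulus $\{r^2-s^2\le|z_2|^2\le 2s-s^2\}$. The maximum principle therefore puts $(1-s,0)$ in the hull, although it lies in neither piece.

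More seriously, suppose you cut $B$ out by a sublevel set of $\operatorname{Re}L_i$ so that the hull does split. Oka--Weil still controls the approximant $g$ only on the two pieces. On the transition region $\overline\Omega\cap(B\setminus B')$ there is no holomorphic target function, so $g$ is uncontrolled there, yet you need $|P|<1$ on that region. Producing a global function that is small there is exactly a Cousin or $\bar\partial$ problem \emph{with estimates}, which is what you set out to avoid.

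There is a second problem near $a_i$. The target $e^{tL_i}$ has modulus tending to $1$, so any additive error can make $|g|>1$ arbitrarily close to $a_i$. Matching the $1$-jet at $a_i$ does not by itself give a bound of the form $|g-e^{tL_i}|\le\epsilon|z-a_i|^2$. The map $g\mapsto(1+g)/2$ only upgrades $|g|\le1$, $g\ne1$ to $|g|<1$; it cannot repair points where $|g|>1$.

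So no point peak function is actually produced, and the final simultaneous approximation at all $a_i$ inherits the same gap. Your intermediate combinations also fail. Both $\frac1m\sum_iP_i$ and its weighted variants have modulus $<1$ at each $a_k$, because $|P_i(a_k)|<1$ for $i\ne k$, and constants $c_i$ cannot fix this while keeping $|P|\le1$.

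The missing idea, which is how the paper proceeds, is to pass to reciprocals so that bounded errors become harmless.

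Set $F_j:=-L_j$. Then $\operatorname{Re}F_j\ge\frac{c_j}{2}|z-a_j|^2$ on $\overline\Omega$ near $a_j$. Note that your displayed inequality has the wrong direction: Taylor's formula gives $2\operatorname{Re}L_i\le\rho-\rho(a_i)-c|z-a_i|^2$, and this is what yields $\operatorname{Re}L_i<0$. Hence $\operatorname{Re}(1/F_j)=\operatorname{Re}F_j/|F_j|^2>0$ there, and $1/F_j$ blows up at $a_j$.

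Let $U_0=\Omega^\varepsilon\setminus\bigcup_j\overline V_j$. The functions $1/F_j$ on $U_0\cap U_j'$ form an additive Cousin datum. Since $H^1(\Omega^\varepsilon,\mathcal O)=0$ by Theorem B, there is $G$ with $G-1/F_j=-g_j$ holomorphic near $a_j$ and $G$ holomorphic near every point of $\overline\Omega\setminus S$.

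Because $g_j$ is bounded near $a_j$, one has $\operatorname{Re}(G+M)>0$ on $\overline\Omega\setminus S$ for $M$ large. Then $H:=1/(G+M)$, which equals $F_j/(1+F_j(M-g_j))$ near $a_j$, extends holomorphically with $H=0$ on $S$ and $\operatorname{Re}H>0$ on $\overline\Omega\setminus S$. Finally $P:=e^{-H}$ peaks exactly on $S$.

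This treats all points at once and uses Theorem B only qualitatively, with no approximation and no sup-norm estimates. Had you possessed point peak functions $P_i$, the same positivity mechanism would combine them: $P:=\exp\bigl(-\bigl(\sum_i(1-P_i)^{-1}\bigr)^{-1}\bigr)$.
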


\begin{proof}
Fix $a_j\in S$. Since $\Omega$ is strongly pseudoconvex, after choosing a
local embedding of a neighborhood $U_j$ of $a_j$ in $X$ into some
$\mathbb C^{N}$ and translating the coordinates, we assume that
$a_j=0$. Then near $a_j$,
$\Omega=\{\rho_j<0\}$ and $\rho_j(0)=0$, 
where $\rho_j$ is the restriction of the defining smooth strictly
plurisubharmonic function $\rho$ on $U_j$. Let
$z=(z_1,\ldots,z_N)$ be the ambient coordinates. Define the holomorphic
polynomial
$$
L_j(z)
:=
\sum_{l=1}^{N}
\frac{\partial\rho_j}{\partial z_l}(0)z_l
+
\frac12
\sum_{l,k=1}^{N}
\frac{\partial^2\rho_j}{\partial z_l\partial z_k}(0)
z_l z_k .
$$
Set $F_j:=-L_j|_{X\cap U_j}.$
Thus $F_j$ is holomorphic on $X\cap U_j$ and $F_j(a_j)=0$.
By Taylor's formula,
$
\rho_j(z)
=
2{\rm Re}\,L_j(z)
+
\sum_{l,k=1}^{N}
\frac{\partial^2\rho_j}{\partial z_l\partial\overline z_k}(0)
z_l\overline z_k
+
o(|z|^2).
$
Since $\rho_j$ is strictly plurisubharmonic, there exists a constant $c_j>0$ such that
$\rho_j(z)
\ge
2{\rm Re}\,L_j(z)+c_j|z|^2$
. Hence, for $x\in\overline\Omega\cap U_j$, since
$\rho_j(x)\le0$, we obtain
$
{\rm Re}\,F_j(x)
=
-{\rm Re}\,L_j(x)
\ge
\frac{c_j}{2}|x|^2 .
$
In particular, $
F_j(a_j)=0$, $
{\rm Re}\,F_j\ge0$ on $\overline\Omega\cap U_j$ and $
{\rm Re}\,F_j>0$ on $(\overline\Omega\cap U_j)\setminus\{a_j\}.
$

After shrinking the neighborhoods $U_j$, we assume that they are pairwise
disjoint. Choose open neighborhoods
$a_j\in V_j\Subset U_j$ for $
j=1,\ldots,m.$
Since $F_j$ does not vanish on the compact set
$\overline\Omega\cap(\overline U_j\setminus V_j)$, we may choose
$\varepsilon>0$ sufficiently small such that
$\Omega^\varepsilon:=\{\rho<\varepsilon\}$
is a Stein neighborhood of $\overline\Omega$ and
$F_j\ne0$ on $\Omega^\varepsilon\cap(U_j\setminus \overline V_j)$
for every $j$.

Set
$U_0:=\Omega^\varepsilon\setminus\bigcup_{j=1}^m \overline V_j$ and $
U_j':=\Omega^\varepsilon\cap U_j$.
Then $\{U_0,U_1',\ldots,U_m'\}$
is an open cover of $\Omega^\varepsilon$. On
$U_0\cap U_j'$ the function $1/F_j$ is holomorphic. They form an additive Cousin datum on the Stein space $\Omega^\varepsilon$.
Since $H^1(\Omega^\varepsilon,\mathcal O)=0$ by Cartan's Theorem~B, this datum is a coboundary. Hence there exist holomorphic functions 
$g_0\in\mathcal O(U_0)$, $
g_j\in\mathcal O(U_j')$, for $
j=1,\ldots,m,
$
such that
$g_j-g_0=\frac1{F_j}$ on $U_0\cap U_j'$.

Define a holomorphic function $G$ on $\Omega^\varepsilon\setminus S$ by
$G=\dfrac1{F_j}-g_j$ on $U_j'\setminus\{a_j\}$ and $G=-g_0$ on $U_0$.
Then $G$ is well defined and holomorphic on
$\Omega^\varepsilon\setminus S$.
We claim that there exists a constant $M>0$ such that
${\rm Re}\,(G+M)>0$ on  $\overline\Omega\setminus S $.

Indeed, near $a_j$ one has
$G=\frac1{F_j}-g_j,$
and, on $\overline\Omega\cap U_j\setminus\{a_j\}$,
${\rm Re}\,\frac1{F_j}
=
\frac{{\rm Re}\,F_j}{|F_j|^2}
>0.$
Since $g_j$ is holomorphic near $a_j$, it is bounded there. Away from the
points $a_j$, the function $G$ is holomorphic in a neighborhood of the relevant
compact subset of $\overline\Omega$. Therefore ${\rm Re}\,G$ is bounded from
below on $\overline\Omega\setminus S$, and the claim follows after choosing
$M$ sufficiently large.

Set
$H:=\frac1{G+M}$
on $\Omega^\varepsilon\setminus S$. Since ${\rm Re}\,(G+M)>0$ on
$\overline\Omega\setminus S$, the denominator does not vanish there. Moreover,
near $a_j$ we have
$H=\frac{1}{\frac1{F_j}-g_j+M}=
\frac{F_j}{1+F_j(M-g_j)}.$
Thus $H$ extends holomorphically across $a_j$ by setting $H(a_j):=0$. After
shrinking the neighborhood of $\overline\Omega$ if necessary, $H$ is
holomorphic in a neighborhood of $\overline\Omega$.

Furthermore, on $\overline\Omega\setminus S$,
${\rm Re}\,H
=
{\rm Re}\left(\frac1{G+M}\right)
>0$.
Define $P:=\exp(-H).$
Then $P$ is holomorphic in a neighborhood of $\overline\Omega$. Since $H=0$ on $S$, we have
$P=1$ on $S$.
On the other hand, for $x\in\overline\Omega\setminus S$,
$|P(x)|
=
\exp(-{\rm Re}\,H(x))
<1.$
Thus $S$ is a holomorphic peak set for $\Omega$.
\end{proof}

We now give  non-finite singular boundary sets which are 
holomorphic peak sets.

\begin{example}
\label{ex:nonfinite-peak-singular-boundary}
Fix $k\ge2$,
let
$X_k:=
\{(z,v,w)\in\mathbb C^k\times\mathbb C\times\mathbb C:w^2=v(1-Q_k(z))^3\}$,  where  $
z=(z_1,\ldots,z_k)\in\mathbb C^k$ and 
$Q_k(z):=k^{k/2}z_1\cdots z_k$.
Then $X_k$ is a reduced locally irreducible Stein space in $\mathbb C^{k+2}$. 
Set
$\Omega_k
:=
X_k\cap
\left\{
\sum_{j=1}^k
|z_j|^2+|v|^2+|w|^2<1
\right\}.$
Since
$\rho(z,v,w):= \left(\sum_{j=1}^k
|z_j|^2+|v|^2+|w|^2-1\right)$
is strictly plurisubharmonic in the ambient space, $\Omega_k$ is a strongly
pseudoconvex domain in $X_k$.
One has
$(X_k)_{\rm sing}
=
\{w=0,\ Q_k(z)=1\}$ and 
$S_k
:=
\partial\Omega_k\cap (X_k)_{\rm sing}
=
\left\{
\left(
\frac{e^{i\theta_1}}{\sqrt k},
\ldots,
\frac{e^{i\theta_k}}{\sqrt k},0,
0
\right):
\theta_1+\cdots+\theta_k\equiv0 \mod 2\pi
\right\}$.

Now define
$P_k(z,v,w):=\frac{1+Q_k(z)}{2}.$
Then $P_k$ is holomorphic in a neighborhood of $\overline{\Omega_k}$. On
$\overline{\Omega_k}$, the arithmetic-geometric mean inequality gives
$
|Q_k(z)|
\le
\left(|z_1|^2+\cdots+|z_k|^2\right)^{k/2}
\le
1
$
with equality precisely when
$|z_1|=\cdots=|z_k|=\frac1{\sqrt k}$. Therefore
$|P_k|
=
\left|\frac{1+Q_k}{2}\right|
\le1.$
Moreover, equality $|P_k|=1$ can occur only when $Q_k=1$. On $X_k$, this
implies $w=0$. The condition $(z,v,w)\in\overline{\Omega_k}$ and $|Q_k(z)|=1$ force $v=0$ and 
$
|z_1|=\cdots=|z_k|=\frac1{\sqrt k}.
$
Thus, on $\overline\Omega_k$, one has $P_k=1$ precisely on $S_k$,
and
$|P_k|<1$ on $\overline{\Omega_k}\setminus S_k$.

Hence $S_k$ is a positive real dimensional holomorphic peak set for $\Omega_k$.
\end{example}

\section{H\"older continuity}
In this section, we estimate the local H\"older modulus of continuity on compact subsets of
$\overline\Omega\cap X_{\rm reg}$ for
the solution $u$ of the Dirichlet problem \eqref{main-eq-general} .

\subsection{Stability estimate}
---
We consider resolution
$\pi:\widetilde X\to X$
which is biholomorphic over $X_{\mathrm{reg}}$, such that
$\widetilde X$ is K\"ahler and
$E:=\pi^{-1}(X_{\mathrm{sing}})$
is a divisor.   Set
$\widetilde\Omega:=\pi^{-1}(\Omega)$ and
$\widetilde u:=u\circ\pi .$
Fix a K\"ahler form $\tau$ on $\widetilde X$. After replacing $\tau$ by a sufficiently
large positive multiple, we may assume that
$\pi^*\beta\le \tau$ on 
$\pi^{-1}(\overline\Omega)$.
We will denote objects on $\widetilde X$ with a tilde.
We now recall Demailly's regularization in \cite[Section 8]{Demailly1982}. 
Set
$$\exp : T\widetilde X \longrightarrow \widetilde X,\qquad
T\widetilde X \ni (\widetilde x,\xi)\longmapsto
\exp_{\widetilde x}(\xi)\in \widetilde X,\quad
\xi\in T_{\widetilde x}\widetilde X . \ $$
be the exponential map associated with $\tau$. The exponential map $\exp$ is a $C^\infty $ mapping. For $0<\delta<\delta_0$, set
$\widetilde\Omega_\delta
:=
\left\{
\widetilde x\in\widetilde\Omega ;
d_\tau(\widetilde x,\partial\widetilde\Omega)>\delta
\right\},$
where $\delta_0>0 $ is fixed so that $\widetilde\Omega_\delta \neq \varnothing$. 
Similarly, we set $\Omega_\delta
:=
\left\{
 x\in\Omega ;
d_\beta(x,\partial\Omega)>\delta
\right\}$ for $\delta$ small enough. For $\delta>0$ sufficiently small and $\widetilde x\in\widetilde\Omega_\delta$, we define the $\delta$-regularization of $\widetilde u$ by
$$
\eta_\delta\widetilde u(\widetilde x)
:=
\frac{1}{\delta^{2n}}
\int_{\xi\in T_{\widetilde x}\widetilde X}
\widetilde u\bigl(\exp_{\widetilde x}(\xi)\bigr)
\eta\left(\frac{|\xi|_\tau^2}{\delta^2}\right)
\,dV_\tau(\xi),
$$
where $\eta$ is a smoothing kernel, 
$|\xi|_\tau^2
=
\sum_{i,j=1}^n \tau_{i\bar j}(\widetilde x)\xi_i\overline{\xi_j}$ and 
$dV_\tau(\xi)
=
\frac{1}{2^n n!}\bigl(dd^c|\xi|_\tau^2\bigr)^n .$

We fix once and for all a smooth radial kernel
$\eta\in C^\infty([0,+\infty))$ such that
$\eta\ge 0$, $\rm{supp}\eta\subset [0,1)$ and $
\int_{\mathbb C^n}\eta(|\zeta|^2)\,dV(\zeta)=1$.
 The dependence on the scale
$\delta$ enters only through the rescaling
$\eta(|\xi|_\tau^2/\delta^2)$.
The normalization is independent of $\widetilde x$, since after choosing a
$\tau$-unitary frame on $T_{\widetilde x}\widetilde X$ and making the change of
variables $\xi=\delta\zeta$, one obtains
$$
\frac{1}{\delta^{2n}}
\int_{T_{\widetilde x}\widetilde X}
\eta\left(\frac{|\xi|_\tau^2}{\delta^2}\right)dV_\tau(\xi)
=
\int_{\mathbb C^n}\eta(|\zeta|^2)dV(\zeta)
=
1.
$$
Following Demailly's notation, we set
$
U(\widetilde x,w):=\eta_{|w|}\widetilde u(\widetilde x)$, $ w\in\mathbb C.$
Then we recall the regularization estimate in \cite[Lemma~2.1]{DDGPKZ14}.
\begin{lemma}
\label{lem-demailly-kiselman}
Let $\widetilde{u}$ be  a bounded psh function on the  K\"ahler manifold $(\widetilde{ \Omega},\tau)$.
There exists a constant $J>0$, depending on the curvature of $(\overline{\widetilde{\Omega}},\tau)$, such that for every fixed $\widetilde x$, the function
$
t\mapsto U(\widetilde x,t)+Jt^2
=
\eta_t\widetilde u(\widetilde x)+Jt^2
$
is increasing for $t$. For $c>0$ and $\delta>0$ sufficiently small, define the Kiselman--Legendre transform
of $\widetilde u$ by
\begin{equation}\label{u_c_delta}
\widetilde u_{c,\delta}(\widetilde x)
:=
\inf_{0<t\le \delta}
\left[
\eta_t\widetilde u(\widetilde x)
+
Jt^2
-
J\delta^2
-
c\log\frac{t}{\delta}
\right],
\qquad
\widetilde x\in\widetilde\Omega_\delta .
\end{equation}
Then we have the following estimate for the complex Hessian
$$
dd^c\widetilde u_{c,\delta}
\ge
-\bigl(A_1c+J\delta\bigr)\tau
$$
in the sense of currents on $\widetilde\Omega_\delta$, where  $A_1\ge 1$ is a lower bound for the negative part of the holomorphic bisectional
curvature of $(\overline{\widetilde{ \Omega}},\tau)$.
\end{lemma}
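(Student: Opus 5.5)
The plan is to follow Demailly's original argument \cite[Section 8]{Demailly1982}, in the form used in \cite[Lemma 2.1]{DDGPKZ14}. We view $U(\widetilde x,w)=\eta_{|w|}\widetilde u(\widetilde x)$ as a function on $\widetilde\Omega_\delta\times\{0<|w|<\delta\}$ and estimate its complex Hessian in the joint variables $(\widetilde x,w)$. Boundedness of $\widetilde u$ and the fact that everything takes place on the relatively compact set $\overline{\widetilde\Omega}$ (via the Kähler form $\tau$) are what make all curvature constants uniform.

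\emph{Step 1: the joint Hessian estimate.} Write $\exp_{\widetilde x}(w\zeta)$ in a $\tau$-unitary frame. Expand the exponential map to second order using holomorphic normal coordinates of $\tau$ around $\widetilde x$. Then compute $dd^c_{(\widetilde x,w)}U$ by differentiating under the integral. Since $\widetilde u$ is psh, the holomorphic part of the first-order expansion contributes a nonnegative term. The curvature of $\tau$ enters only through the second-order term of $\exp$. This should give Demailly's inequality
\[
dd^c_{(\widetilde x,w)}U(\widetilde x,w)\ge -A_1\,\lambda(\widetilde x,|w|)\,\tau-K|w|^2\tau-K\,id w\wedge d\bar w ,
\qquad \lambda(\widetilde x,t):=t\,\partial_t\bigl(U(\widetilde x,t)\bigr)+Kt^2 ,
\]
with $K$ depending only on $(\overline{\widetilde\Omega},\tau)$ and $\|\widetilde u\|_\infty$. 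I expect this to be the main obstacle. The precise bookkeeping of which terms are absorbed into $\lambda$ and which are curvature errors is delicate, and the first thing I would do is to copy Demailly's Proposition 8.5 verbatim rather than redo the Taylor expansion.

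\emph{Step 2: monotonicity.} Restrict Step 1 to the $w$-direction at fixed $\widetilde x$. It says that $w\mapsto U(\widetilde x,w)+J|w|^2$ is subharmonic for a suitable $J\ge K$. Since this function is radial, it is convex in $\log|w|$. It also stays bounded as $|w|\to 0$, because $\eta_t\widetilde u(\widetilde x)\to\widetilde u(\widetilde x)$ by upper semicontinuity and the bound on $\widetilde u$. A convex function of $s=\log t$ on $(-\infty,\log\delta)$ that is bounded as $s\to-\infty$ is nondecreasing. This gives the monotonicity of $t\mapsto \eta_t\widetilde u(\widetilde x)+Jt^2$. It also shows $\lambda\ge 0$, so the term $-A_1\lambda\tau$ in Step 1 is indeed an error term.

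\emph{Step 3: the Kiselman--Legendre transform.} Set
\[
V(\widetilde x,w):=U(\widetilde x,w)+J|w|^2-J\delta^2-c\log\frac{|w|}{\delta},
\]
so that $\widetilde u_{c,\delta}(\widetilde x)=\inf_{0<|w|\le\delta}V(\widetilde x,w)$, and $V$ depends on $w$ only through $|w|$. By Step 2 the map $t\mapsto V(\widetilde x,t)$ is convex in $\log t$. It also tends to $+\infty$ as $t\to0$, because of the term $-c\log t$. Hence the infimum is attained at some $t_0\in(0,\delta]$. If $t_0<\delta$, the derivative vanishes there, which gives $t_0\partial_tU=c-2Jt_0^2$ and hence $\lambda(\widetilde x,t_0)\le c+O(t_0^2)$. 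If $t_0=\delta$, the one-sided condition gives the same inequality.

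Plugging this bound into Step 1 yields $dd^c V\ge -(A_1c+J\delta)\tau$ along the minimizing set, after enlarging $J$ once. To handle $\tau$, work locally: on a small ball pick a smooth potential $\varphi$ with $dd^c\varphi=\tau$ up to an arbitrarily small error. Then $V+(A_1c+J\delta)\varphi(\widetilde x)$ is psh in $(\widetilde x,w)$ near the minimizers and radial in $w$. Kiselman's minimum principle then says its infimum over $|w|$ is psh in $\widetilde x$, and this infimum equals $\widetilde u_{c,\delta}+(A_1c+J\delta)\varphi$. This gives $dd^c\widetilde u_{c,\delta}\ge -(A_1c+J\delta)\tau$ locally, hence on all of $\widetilde\Omega_\delta$.

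One technical point remains: the bound on $\lambda$ is only available at the minimizing $t_0$, not for every $t$. Kiselman's principle needs psh in all variables, so I would follow Demailly and replace $U$ by $\max\{U,\,\text{affine minorant in }\log t\}$ near $t_0$, or equivalently use the Legendre-transform formulation. Either way the lower bound on the Hessian is only needed where the infimum is achieved. This is the only place where care beyond citing Demailly is required.
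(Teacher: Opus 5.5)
Your plan follows the same route as the paper, which does not prove this lemma but quotes it from \cite[Lemma~2.1]{DDGPKZ14}: Demailly's joint Hessian estimate for $U(\widetilde x,w)=\eta_{|w|}\widetilde u(\widetilde x)$ combined with Kiselman's minimum principle for the Legendre-type transform, which is what your Steps 1--3 reconstruct. The one point to repair is the device at the end of Step 3, since $\max\{U,\text{an affine minorant of }U\}$ is just $U$. Instead, use that $U$ is smooth for $t>0$ and that $\lambda=t\partial_t(U+Jt^2)$ is nondecreasing in $t$ (your Step 2). Then near each $\widetilde x_0$ there are a small coordinate ball $D'$ and some $b\le\delta$ such that $\lambda<c+\epsilon$ on $D'\times\{0<|w|<b\}$, and for every $\widetilde x\in D'$ the infimum defining $\widetilde u_{c,\delta}(\widetilde x)$ equals the infimum over $0<t<b$. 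The ordinary Kiselman principle on this product domain then gives $dd^c\widetilde u_{c,\delta}\ge-(A_1(c+\epsilon)+J\delta)\tau$ on $D'$, and you let $\epsilon\to0$.
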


For $0<\alpha_*<1$, we have
$\delta^{\alpha_*}\gg \delta$ as  $\delta\to0^+.$ Thus, after decreasing $\delta$ if necessary, we may assume that
\begin{equation}\label{c_delta}
c_\delta
:=
\frac{\delta^{\alpha_*}-J\delta}{A_1}>\frac{\delta^{\alpha_*}}{2A_1}>0.
\end{equation}
Then  we obtain
$
A_1c_\delta+J\delta=\delta^{\alpha_*}
$
and
$dd^c\widetilde u_{c_\delta,\delta}
\ge
-\delta^{\alpha_*}\tau$ on $ \widetilde\Omega_\delta$ .

Since $\Omega$ is strongly pseudoconvex, after shrinking the neighborhood on
which $\rho$ is defined, we may choose $\varepsilon>0$ small enough so that
$
\widehat\Omega:=\{\rho<\varepsilon\}
$
satisfies
$
\Omega\Subset \widehat\Omega\Subset X,
$
and $\rho$ is smooth and strongly plurisubharmonic near $\overline{\widehat\Omega}$. Set
$\widetilde{\widehat\Omega}:=\pi^{-1}(\widehat\Omega).$

By the desingularization theorem originating in
Hironaka's resolution,
 there exist positive rational
numbers
$b_i\in\mathbb Q_{+}$
such that the exceptional
$\mathbb Q$-divisor
$-\sum_i b_iE_i$
satisfies
$\mathrm{supp}(-\sum_i b_iE_i)\cap\widetilde{\widehat\Omega}
=
E\cap\widetilde{\widehat\Omega}
$
and
$-\sum_i b_iE_i$
is $\pi$-ample, where each exceptional divisor $E_i$ is an irreducible component of $E$.
For each $i$, let $s_i$ be the canonical section of
$\mathcal O_{\widetilde X}(E_i)$, and choose a smooth Hermitian metric $h_i$
on $\mathcal O_{\widetilde X}(E_i)$ over
$\widetilde{\widehat\Omega}$. Since $-\sum_i b_iE_i$ is $\pi$-ample over $\widehat\Omega$, the metrics
$h_i$ can be chosen so that the smooth real $(1,1)$-form
$
\vartheta_E
:=
-\sum_i b_i\,
\Theta_{h_i}\bigl(\mathcal O_{\widetilde X}(E_i)\bigr)
$
is positive on
$\ker d\pi$ over $\widetilde{\widehat\Omega}$.

Set
$\widetilde {\overline\Omega}:=\pi^{-1}(\overline\Omega)\subset \widetilde{\widehat\Omega}$.
Since $\pi$ is proper and $\overline\Omega$ is compact, $\widetilde {\overline\Omega}$ is compact. 
After replacing $-\sum_i b_iE_i$ by a sufficiently large positive multiple, and keeping the
same notation, we may assume that
$\vartheta_E\ge 2\tau$ on $\ker d\pi\text{ over }\widetilde {\overline\Omega}$. 
By compactness of $\widetilde {\overline\Omega}$,
there exists $A>0$ such that
$\vartheta_E+A\pi^*dd^c\rho\ge \tau $ on $\widetilde {\overline\Omega}$. Define
$
\widetilde\rho_E^0
:= A\pi^*\rho+ \sum_i b_i
\log |s_i|_{h_i}^2 .
$
Then $\widetilde\rho_E^0$ is smooth on $\widetilde\Omega\setminus E$, has
logarithmic poles along $E$, and satisfies
$\widetilde\rho_E^0\to-\infty$ along $E$.
Moreover, we have
$dd^c\widetilde\rho_E^0\ge \tau $ on $\widetilde\Omega$
in the sense of currents.

Finally, $\widetilde\rho_E^0$ is bounded from above on $\widetilde\Omega$. Hence, after
subtracting a sufficiently large constant $C$, we obtain
$\widetilde\rho_E:=\widetilde\rho_E^0-C\le 0$ on $\widetilde\Omega$.
Subtracting a constant does not change its current, so
$
dd^c\widetilde\rho_E\ge \tau$ on $\widetilde\Omega.$
Combining \eqref{u_c_delta} and \eqref{c_delta}, we define
$$
\widehat u_\delta
:=
\widetilde u_{c_\delta,\delta}
+
\delta^{\alpha_*}\widetilde\rho_E
\quad\text{on } \widetilde\Omega_\delta .
$$
Then
$
dd^c\widehat u_\delta
=
dd^c\widetilde u_{c_\delta,\delta}
+
\delta^{\alpha_*}dd^c\widetilde\rho_E
\ge
-\delta^{\alpha_*}\tau
+
\delta^{\alpha_*}\tau
=
0.
$
Hence $\widehat u_\delta$ is a singular plurisubharmonic function on $\widetilde\Omega_\delta$.
Moreover, since $\widetilde\rho_E\to-\infty$ along $E$ and $\widetilde u_{c_\delta,\delta}$ is bounded from above locally, the function $\widehat u_\delta$ tends to
$-\infty$ along $E$.
On $\Omega_\delta\cap X_{\mathrm{reg}}$, the map $\pi$ is biholomorphic. For $x\in \Omega_\delta\cap X_{\mathrm{reg}}$, we therefore
define
\begin{equation}\label{check_u_delta}
\check u_\delta(x)
:=
\widehat u_\delta(\pi^{-1}(x)).
\end{equation}
Since $\widehat u_\delta$ tends to $-\infty$ along $E=\pi^{-1}(X_{\mathrm{sing}})$,
we have
$\limsup_{\Omega_\delta\cap X_{\mathrm{reg}}\ni x\to a}
\check u_\delta(x)
=
-\infty$
for every $a\in\Omega_\delta\cap X_{\mathrm{sing}}$.
Since $\pi^*\beta \le \tau$ on $\widetilde {\overline{\Omega}}$, if $z\in \Omega_\delta$, then  we get $\widetilde{z} \in \widetilde \Omega_\delta$. 
Thus the upper semicontinuous extension of $\check u_\delta$ across
$X_{\mathrm{sing}}$, obtained by setting it equal to $-\infty$ on
$\Omega_\delta\cap X_{\mathrm{sing}}$, is plurisubharmonic on $\Omega_\delta$
by Theorem~\ref{weak_psh_psh}. We still denote this extension by
$\check u_\delta$.

\begin{prop}
\label{prop:general-modulus-capacity-transfer}
Let $u$ be the solution given by Theorem~\ref{main thm}. Assume that
$\phi$ is locally $\alpha$-H\"older continuous on
$\partial\Omega\cap X_{\rm reg}$ and
$S$ is a holomorphic peak set for $\Omega$. Fix $r>0$, $0<\kappa<1$ and $0<\alpha_*<\frac{1}{nq+1}.$
Set $
\Omega_r^{\rm reg}
:=
\{x\in\overline{\Omega}:\ d_\beta(x,X_{\rm sing})\ge r\}.$
Then there exist constants $C_r>0$ and $\delta_{\kappa
}>0$ such that, for all
$0<\delta<\delta_{\kappa
}$,
$$
\check u_\delta-u
\le
C_r
\left[
\delta^{\alpha_*}
+
\mathcal H_{\phi}\left(\frac{\delta^\kappa}{2}\right)
\delta^{\frac{\alpha}{2}}
+
\delta^\kappa
\right]
\quad
\text{on }
\Omega_{2\delta}\cap\Omega_r^{\rm reg}.
$$
\end{prop}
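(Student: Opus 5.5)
The plan is the GKZ/DDGPKZ stability scheme. We build a global competitor from $\check u_\delta$ on $\Omega_{2\delta}$, glue it to $u$ near the boundary, and compare with $u$ using Proposition~\ref{prop:stability}.

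\emph{Step 1: comparing $\check u_\delta$ with $u$ near $\partial\Omega_{\delta}$.} The function $t\mapsto\eta_t\widetilde u+Jt^2$ is increasing (Lemma~\ref{lem-demailly-kiselman}). Taking $t=\delta$ in \eqref{u_c_delta} therefore gives
\[
\widetilde u_{c_\delta,\delta}\le\eta_\delta\widetilde u\le \sup_{B_\tau(\widetilde x,\delta)}\widetilde u .
\]
Since $\widetilde\rho_E\le0$, we get $\check u_\delta(x)-u(x)\le \sup_{d_\beta(x,y)\le\delta}(u(y)-u(x))$ on $\Omega_\delta\cap X_{\rm reg}$, using $\pi^*\beta\le\tau$. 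On the layer $\Omega_\delta\setminus\Omega_{2\delta}$ this oscillation is controlled in two ways.

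Near $\partial\Omega\cap X_{\rm reg}$ at distance $\ge\delta^\kappa$ from $S$, Proposition~\ref{near boundary K} applies. Its proof shows that the constant depends only on $\mathcal H_\phi$ evaluated at half the distance to $S$. This gives the bound $C\mathcal H_\phi(\delta^\kappa/2)\delta^{\alpha/2}$.

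Near $S$, i.e.\ on $N_{\delta^\kappa}S$, there is no such control. Here we use the peak-set barrier $\psi_\lambda$ of Proposition~\ref{prop:peak-function-barrier} with $\lambda=\delta^\kappa$ and $E=S$. We subtract $M\psi_\lambda$-type corrections, or rather add $C\psi_{\lambda}$ to the competitor with $C\ge 2\,\mathrm{osc}\,u/c_0$. This forces the competitor below $u$ on $N_{\delta^\kappa}S$. On $K:=\Omega_r^{\rm reg}$, which is compact and disjoint from $S$, the correction costs only $c_K\delta^\kappa$.

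\emph{Step 2: gluing.} Set
\[
v_\delta:=\check u_\delta+A\psi_{\delta^\kappa}-C_1\bigl(\mathcal H_\phi(\delta^\kappa/2)\delta^{\alpha/2}+\delta^\kappa\bigr).
\]
By Step 1, $v_\delta\le u$ near $\partial\Omega_\delta$. The function $\max(v_\delta,u)$ on $\Omega_\delta$, extended by $u$ outside, is then psh on $\Omega$ and equals $u$ near $\partial\Omega$. Note that $\check u_\delta=-\infty$ on $X_{\rm sing}$, so interior singular points cause no trouble.

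\emph{Step 3: stability.} Apply Proposition~\ref{prop:stability} to $u$ and $w:=\max(v_\delta,u)$. Since the boundary term vanishes, this yields
\[
\sup(v_\delta-u)_+\le C\|(w-u)\|_{L^1}^{\gamma}.
\]
The $L^1$ norm is bounded by $\int_{\Omega_\delta}(\check u_\delta-u)_+$. This is where the main obstacle lies: we must show it is $O(\delta)$, or at least $O(\delta^{\alpha_*/\gamma})$ with $\gamma$ chosen in $(\alpha_*,1/(nq+1))$.

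The plan for this step follows \cite{DDGPKZ14}. Bound $\check u_\delta-u\le \eta_\delta\widetilde u-\widetilde u$ and use the Laplacian estimate
\[
\int(\eta_\delta\widetilde u-\widetilde u)\,dV\le C\delta^2\int_{\widetilde\Omega_{\delta}}\Delta_\tau\widetilde u\, dV_\tau + O(\delta^2).
\]
The total mass of $\Delta\widetilde u=dd^c\widetilde u\wedge\tau^{n-1}$ is finite by the Chern--Levine--Nirenberg inequality for the bounded psh $\widetilde u$. It is only locally finite, so we multiply by a cutoff and use the distance to $\partial\widetilde\Omega$ weight; this gives mass growth at most $\delta^{-1}$ on $\widetilde\Omega_\delta$. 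The $L^1$ norm is then $O(\delta)$, and the result is $\sup(v_\delta-u)\le C\delta^{\gamma}\le C\delta^{\alpha_*}$.

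Unwinding on $\Omega_{2\delta}\cap\Omega_r^{\rm reg}$, where $A\psi\ge -Ac_K\delta^\kappa$, yields the stated bound. The constant $C_r$ depends on $r$ through $c_K$, and $\delta_\kappa$ is chosen so that $\delta^\kappa<\lambda_K$ and Proposition~\ref{near boundary K} applies.
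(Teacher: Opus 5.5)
Your proposal is correct and follows essentially the same route as the paper. The paper uses the peak-set barrier $\psi_{\delta^\kappa}$ with a coefficient of order $\|u\|_{L^\infty}/c_0$ to handle the part of the boundary layer near $S$, and the intermediate barrier estimate \eqref{middle_result} away from $N_{\delta^\kappa}S$. It then uses max-gluing so that the boundary term in the stability estimate vanishes, and unwinds on $\Omega_r^{\rm reg}$ via $-\psi_{\delta^\kappa}\le c_r\delta^\kappa$.

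The differences are cosmetic. The paper glues along $\partial\Omega_{2\delta}$ and raises $u$ by $C_1\mathcal H_\phi(\delta^\kappa/2)\delta^{\alpha/2}$, where you glue along $\partial\Omega_\delta$ and lower the competitor. For the $L^1$ step, the paper simply cites \cite[Lemma~4.1]{CerqueiraGoncalves2026} to get $C_\varepsilon\delta^{1-\varepsilon}$ and chooses $\gamma(1-\varepsilon)>\alpha_*$, which is exactly your fallback $O(\delta^{\alpha_*/\gamma})$. In your Laplacian sketch the relevant mass actually lives on a slightly larger set than $\widetilde\Omega_\delta$; this is harmless because $\operatorname{supp}\eta\subset[0,1)$, but it is why gluing on $\Omega_{2\delta}$ is the more convenient choice.
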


\begin{proof}
Choose
$\gamma\in\left(\alpha_*,\frac{1}{nq+1}\right).$
Then choose $0<\varepsilon<1$ such that
$\gamma(1-\varepsilon)>\alpha_*.$
Set
$
\lambda_\delta:=\delta^\kappa.
$
Since $0<\kappa<1$, after decreasing $\delta_{\kappa
}>0$ if necessary, we may assume
that
$8\delta\le \lambda_\delta$
for all $0<\delta<\delta_{\kappa
}<1$.

By Proposition~\ref{prop:peak-function-barrier}, applied to the holomorphic
peak set $S$ and to the compact set containing $\Omega_r^{\rm reg}$, there exist
constants $c_0>0$, $c_r>0$, and $\lambda_r>0$ such that, for every
$0<\lambda<\lambda_r$, there exists a function
$\psi_\lambda\in{\rm PSH}(\Omega)\cap C^0(\overline\Omega)$ satisfying
$$
-1\le \psi_\lambda\le 0
\;
\text{ on } \Omega,\quad
\psi_\lambda\le -c_0
\;
\text{ on } \Omega\cap N_\lambda S,\quad 
\psi_\lambda\ge -c_r\lambda
\;
\text{ on } \Omega_r^{\rm reg},
$$
After decreasing $\delta_{\kappa}$ if necessary, we may assume that $8\delta \le \lambda_{\delta}<\lambda_r$ for all 
$0<\delta<\delta_{\kappa
}<1$. Set $\psi_\delta:=\psi_{\lambda_\delta}.$
Let
$L:=\frac{2\|u\|_{L^\infty(\Omega)}}{c_0}$
and define
$V_{\delta,r}:=\check u_\delta+L\psi_\delta$ on $\Omega_{2\delta}$.
Since both $\check u_\delta$ and $\psi_\delta$ are plurisubharmonic on
$\Omega_{2\delta}$, we have
$
V_{\delta,r}\in{\rm PSH}(\Omega_{2\delta}).
$
We claim that there exists a constant $C_r>0$ such that, for all
$0<\delta<\delta_{\kappa
}$,
$$
\limsup_{\Omega_{2\delta}\ni z\to\zeta}
V_{\delta,r}(z)
\le
u(\zeta)
+
C_r
\mathcal H_{\phi}
\left(
\frac{\lambda_\delta}{2}
\right)
\delta^{\alpha/2} \quad \text{ for every } \zeta\in\partial\Omega_{2\delta}. 
$$
It is enough to prove this estimate for points
$z\in\Omega_{2\delta}$ satisfying
$
d_\beta(z,\partial\Omega)\le 3\delta,
$
and then let $z\to\zeta\in\partial\Omega_{2\delta}$.

We distinguish two cases. First assume that
$z\in N_{\lambda_\delta}S.$
Since $\psi_\delta\le -c_0$ on $\Omega\cap N_{\lambda_\delta}S$, we get
$
V_{\delta,r}(z)
=
\check u_\delta(z)+L\psi_\delta(z)
\le
\check u_\delta(z)-Lc_0.
$
By the definition of $\check u_\delta$ and since $\rho_E\le0$, we have
$\check u_\delta\le \pi_*(\eta_\delta\widetilde u)
\le
\|u\|_{L^\infty(\Omega)}.$
Therefore,
$
V_{\delta,r}(z)
\le
\|u\|_{L^\infty(\Omega)}-Lc_0
=
-\|u\|_{L^\infty(\Omega)}
\le
u(z).
$

Now assume that
$z\notin N_{\lambda_\delta}S.$
Choose $\xi\in\partial\Omega$ such that
$d_\beta(z,\xi)=d_\beta(z,\partial\Omega).$
Since $d_\beta(z,S)\ge\lambda_\delta$ and $d_\beta(z,\xi)\le3\delta$, the
condition $8\delta\le\lambda_\delta$ gives
$
d_\beta(\xi,S)
\ge
d_\beta(z,S)-d_\beta(z,\xi)
\ge
\lambda_\delta-3\delta
\ge
\frac{\lambda_\delta}{2}.
$
Thus
$\xi\in \Gamma_{\lambda_\delta/2}\subset \partial\Omega\cap X_{\rm reg}.$
Let $\widetilde z=\pi^{-1}(z)$ whenever $z\in X_{\rm reg}$. If $z$ is a
singular point, the following argument is applied by taking limits from
$X_{\rm reg}$. For every point $\widetilde y\in\widetilde\Omega$ satisfying
$d_\tau(\widetilde y,\widetilde z)\le\delta,$
since $\pi^*\beta\le\tau$, it follows that
$d_\beta(\pi(\widetilde y),z)\le\delta.$
Consequently,
$
d_\beta(\pi(\widetilde y),\xi)
\le
d_\beta(\pi(\widetilde y),z)+d_\beta(z,\xi)
\le
4\delta.
$
By the intermediate estimate \eqref{middle_result} in the proof of Proposition~\ref{near boundary K}, applied with
$\overline{\Omega}\setminus N_{\lambda_\delta}S$ and $4\delta$, we obtain
$$
\bigl|
u(\pi(\widetilde y))-u(z)
\bigr|
\le
2C
\mathcal H_{\phi}
\left(
\frac{\lambda_\delta}{2}
\right)
(4\delta)^{\alpha/2}= 2^{\alpha+1}C\mathcal H_{\phi}
\left(
\frac{\lambda_\delta}{2}
\right)
\delta^{\alpha/2}.
$$
Set $C_1:=2^{\alpha+1}C$. 
Averaging in the regularization variable gives
$
\eta_\delta\widetilde u(\widetilde z)
\le
u(z)
+
C_1
\mathcal H_{\phi}
\left(
\frac{\lambda_\delta}{2}
\right)
\delta^{\alpha/2}.
$
Since
$\check u_\delta\le \pi_*(\eta_\delta\widetilde u)$
and $\psi_\delta\le0$, we obtain
$$
V_{\delta,r}(z)
=
\check u_\delta(z)+L\psi_\delta(z)
\le
\check u_\delta(z)
\le
u(z)
+
C_1
\mathcal H_{\phi}
\left(
\frac{\lambda_\delta}{2}
\right)
\delta^{\alpha/2}.
$$
Define
$\Psi_{\delta,r}
:=
u+
C_1
\mathcal H_{\phi}
\left(
\frac{\lambda_\delta}{2}
\right)
\delta^{\alpha/2},$
and set
$$
U_{\delta,r}
:=
\begin{cases}
\max\{V_{\delta,r},\Psi_{\delta,r}\},
& \text{on }\Omega_{2\delta},\\
\Psi_{\delta,r},
& \text{on }\Omega\setminus\Omega_{2\delta}.
\end{cases}
$$
By the boundary estimate just proved, we have
$U_{\delta,r}\in{\rm PSH}(\Omega)\cap L^\infty(\Omega).$

On $\Omega\setminus\Omega_{2\delta}$, the definition gives
$\bigl(U_{\delta,r}-\Psi_{\delta,r}\bigr)_+=0.$
On $\Omega_{2\delta}$, we have
$U_{\delta,r}
=
\max\{V_{\delta,r},\Psi_{\delta,r}\}.$
Therefore, $\bigl(U_{\delta,r}-\Psi_{\delta,r}\bigr)_+
=
\bigl(V_{\delta,r}-\Psi_{\delta,r}\bigr)_+.$
Since
$V_{\delta,r}
=
\check u_\delta+L\psi_\delta
\le
\check u_\delta$
and
$\Psi_{\delta,r}\ge u,$
we get
$
\bigl(U_{\delta,r}-\Psi_{\delta,r}\bigr)_+
\le
(\check u_\delta-u)_+$ on $\Omega_{2\delta}.$
Hence
$$
\left\|
\bigl(U_{\delta,r}-\Psi_{\delta,r}\bigr)_+
\right\|_{L^1(\Omega,\beta^n)}
\le
\int_{\Omega_{2\delta}}
(\check u_\delta-u)_+\,\beta^n.
$$
Since
$\check u_\delta
=
\pi_*
\left(
\widetilde u_{c_\delta,\delta}
+
\delta^{\alpha_*}\widetilde\rho_E
\right),$
and since $\widetilde\rho_E\le0$ and
$\widetilde u_{c_\delta,\delta}\le \eta_\delta\widetilde u,$
we obtain, outside $X_{\rm sing}$,
$(\check u_\delta-u)_+
\le
\pi_*
\bigl(\eta_\delta\widetilde u-\widetilde u\bigr)_+.$
Since $X_{\rm sing}$ has zero $\beta^n$-measure, using $\pi^*\beta\le\tau$, we
obtain
$$
\int_{\Omega_{2\delta}}(\check u_\delta-u)_+\,\beta^n
\le
\int_{\pi^{-1}(\Omega_{2\delta})}
\bigl(\eta_\delta\widetilde u-\widetilde u\bigr)_+
\,\tau^n.
$$
Applying the local $L^1$-estimate \cite[Lemma~4.1]{CerqueiraGoncalves2026} in finitely
many coordinate charts covering $\pi^{-1}(\overline\Omega)$, we get that there exists $C_\varepsilon>0$ such that
$
\int_{\pi^{-1}(\Omega_{2\delta})}
\bigl(\eta_\delta\widetilde u-\widetilde u\bigr)_+
\,\tau^n
\le
C_\varepsilon\delta^{1-\varepsilon}.
$
Therefore,
$\left\|
\bigl(U_{\delta,r}-\Psi_{\delta,r}\bigr)_+
\right\|_{L^1(\Omega,\beta^n)}
\le
C_\varepsilon\delta^{1-\varepsilon}.$
Applying the stability estimate for fixed densities, \cite[Proposition~1.8]{GuedjGuenanciaZeriahi2023}, to $U_{\delta,r}$ and
$\Psi_{\delta,r}$, we obtain
$$
\sup_\Omega
\bigl(U_{\delta,r}-\Psi_{\delta,r}\bigr)_+
\le
C
\left\|
\bigl(U_{\delta,r}-\Psi_{\delta,r}\bigr)_+
\right\|_{L^1(\Omega,\beta^n)}^\gamma.
$$
Using the $L^1$-estimate above, we get
$\sup_\Omega
\bigl(U_{\delta,r}-\Psi_{\delta,r}\bigr)_+
\le
C_\varepsilon\delta^{\gamma(1-\varepsilon)}.$
Since $\gamma(1-\varepsilon)>\alpha_*$ and $0<\delta<\delta_{\kappa}<1$, it follows that
$\sup_\Omega
\bigl(U_{\delta,r}-\Psi_{\delta,r}\bigr)_+
\le
C_{\varepsilon}\delta^{\alpha_*}$.

It remains to estimate $\check u_\delta-u$ on
$\Omega_{2\delta}\cap\Omega_r^{\rm reg}$. Since
$
V_{\delta,r}\le U_{\delta,r}
$ on $\Omega_{2\delta},$
we have
$\check u_\delta+L\psi_\delta
\le
U_{\delta,r}.$
Thus
$\check u_\delta-u
\le
U_{\delta,r}-u-L\psi_\delta.$
Using the definition of $\Psi_{\delta,r}$, we obtain
$$
\check u_\delta-u
\le
U_{\delta,r}-\Psi_{\delta,r}
+
C_1
\mathcal H_{\phi}
\left(
\frac{\lambda_\delta}{2}
\right)
\delta^{\alpha/2}
+
L(-\psi_\delta).
$$
On $\Omega_r^{\rm reg}$, we have
$-\psi_\delta\le c_r\lambda_\delta=c_r\delta^\kappa.$
Set $C_r:= C_{\varepsilon}+C_1+Lc_r$. 
Then we get
\begin{equation*}
\check u_\delta-u
\le
C_r
\left[
\delta^{\alpha_*}
+
\mathcal H_{\phi}
\left(
\frac{\delta^\kappa}{2}
\right)
\delta^{\alpha/2}
+
\delta^\kappa
\right]
\quad
\text{on }
\Omega_{2\delta}\cap\Omega_r^{\rm reg}. \qedhere
\end{equation*}
\end{proof}

\subsection{Proof of  H\"older continuity}\label{local_holder}

\begin{theorem}
\label{thm:holder-peak-set}
Let $u$ be the solution obtained in Theorem~\ref{main thm}. Assume that
$\phi$ is locally $\alpha$-H\"older continuous on
$\partial\Omega\cap X_{\rm reg}$  and 
$S$ is a holomorphic peak set for $\Omega$.
Assume moreover that there exist constants $B>0$, $\sigma\ge0$, and
$\lambda_0>0$ such that
\begin{equation}\label{mathcal_H_phi_3.4}
\mathcal H_\phi(\lambda)\le B\lambda^{-\sigma},
\qquad
0<\lambda<\lambda_0.
\end{equation}
Then, for every compact set
$K\Subset\overline\Omega\cap X_{\rm reg}$ and every exponent
$0<\nu<
\min\left\{
\frac{1}{nq+1},
\frac{\alpha}{2(\sigma+1)}
\right\},$
there exists a constant $C_{K,\nu}>0$ such that
$$
|u(x)-u(y)|
\le
C_{K,\nu}d_\beta(x,y)^\nu,
\qquad
x,y\in K.
$$
The constant $C_{K,\nu}$ may blow up when $K$ approaches $X_{\rm sing}$.
\end{theorem}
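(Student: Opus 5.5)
The argument splits $K$ into an interior part and a near-boundary part. The interior part is handled by Proposition~\ref{prop:general-modulus-capacity-transfer} together with a standard regularization argument. The near-boundary part is handled by Proposition~\ref{near boundary K}.

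\textbf{Choosing $\kappa$.} Insert \eqref{mathcal_H_phi_3.4} into Proposition~\ref{prop:general-modulus-capacity-transfer}. For $\delta$ small,
$$
\mathcal H_\phi(\delta^\kappa/2)\,\delta^{\alpha/2}\le B2^\sigma\delta^{\alpha/2-\kappa\sigma}.
$$
The right-hand side of the proposition is therefore bounded by $C\delta^{\min\{\alpha_*,\ \alpha/2-\kappa\sigma,\ \kappa\}}$. To balance the last two exponents, take $\kappa$ slightly less than $\frac{\alpha}{2(\sigma+1)}$; then both $\alpha/2-\kappa\sigma$ and $\kappa$ exceed $\nu$. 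Also pick $\alpha_*\in(\nu,\frac1{nq+1})$. Fix $r>0$ with $r<d_\beta(K,X_{\rm sing})$, so that $K\subset\Omega_r^{\rm reg}$. The proposition then gives
$$
\check u_\delta-u\le C\delta^{\nu'} \quad\text{on } \Omega_{2\delta}\cap\Omega_r^{\rm reg},
$$
for some $\nu'>\nu$.

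\textbf{From regularization to an oscillation bound.} On $\Omega_r^{\rm reg}$, $\pi$ is a biholomorphism and $\widetilde\rho_E$ is bounded. Hence $\check u_\delta\ge \pi_*\widetilde u_{c_\delta,\delta}-C_r'\delta^{\alpha_*}$ there. The standard Kiselman--Legendre argument (as in \cite{DDGPKZ14}) uses the monotonicity of $t\mapsto\eta_t\widetilde u+Jt^2$ from Lemma~\ref{lem-demailly-kiselman} and the choice $c_\delta\sim\delta^{\alpha_*}$. It converts the bound $\widetilde u_{c_\delta,\delta}-\widetilde u\le C\delta^{\nu'}$ into $\eta_{\delta'}\widetilde u-\widetilde u\le C\delta^{\nu''}$ at a comparable scale $\delta'$, where $\nu''$ is slightly less than $\nu'$ but still larger than $\nu$. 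Take sup-means instead of averages, i.e.\ $\sup_{B_\tau(\widetilde x,\delta)}\widetilde u\le\widetilde u(\widetilde x)+C\delta^{\nu}$; this is the usual step relating $\eta_\delta$ to the sup over balls, up to a change of constant and scale. Since $\widetilde u$ is psh, this sup-over-balls bound, combined with the symmetric argument, yields the oscillation estimate
$$
|u(x)-u(y)|\le C d_\beta(x,y)^\nu
$$
for $x,y\in K\cap\Omega_{2\delta}$ with $d_\beta(x,y)\le\delta$. Here one uses that $d_\beta$ and $d_\tau$ are comparable on $\pi^{-1}(\Omega_{r/2}^{\rm reg})$.

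\textbf{Near the boundary.} Points within distance $\sim d_\beta(x,y)$ of $\partial\Omega$ are covered by Proposition~\ref{near boundary K}, which gives the bound $C s^{\alpha/2}$ with a constant that is finite because $K\cap\partial\Omega$ stays away from $S$. Since $\alpha/2\ge\nu$, this suffices. The two regimes are glued by the case split
\begin{itemize}
\item $d_\beta(x,\partial\Omega)$ or $d_\beta(y,\partial\Omega)$ less than $2d_\beta(x,y)$: apply the boundary estimate;
\item otherwise: apply the interior estimate with $\delta\approx d_\beta(x,y)$.
\end{itemize}
Pairs at distance $\ge\delta_K$ are handled trivially using $\|u\|_\infty$.

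\textbf{Main obstacle.} The delicate step is the transfer from the Kiselman--Legendre bound to an actual modulus of continuity on $K$. The one-sided estimate for $\widetilde u_{c,\delta}$ has to be turned into an estimate for $\eta_\delta\widetilde u-\widetilde u$, with the exponent loss kept small enough that it stays above $\nu$. I would follow \cite[Theorem~2.3]{DDGPKZ14} verbatim, working on $\pi^{-1}(\Omega_r^{\rm reg})$, where the geometry is that of a compact region of a Kähler manifold.
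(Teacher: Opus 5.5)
Your architecture is the paper's: you insert the bound $\mathcal H_\phi(\lambda)\le B\lambda^{-\sigma}$ into Proposition~\ref{prop:general-modulus-capacity-transfer} and remove $\delta^{\alpha_*}\widetilde\rho_E$ using its lower bound away from $X_{\rm sing}$. You then run the Kiselman--Legendre minimizer argument, convert the result to a H\"older bound, and glue with Proposition~\ref{near boundary K} by the same case split. However, the Kiselman--Legendre step fails as you set it up. You choose $\alpha_*\in(\nu,\frac1{nq+1})$ independently of $\kappa$. The resulting bound is then $\check u_\delta-u\le C\delta^{\nu'}$ with $\nu'=\min\{\alpha_*,\kappa\}$, since $\alpha/2-\kappa\sigma>\kappa$ for your $\kappa$. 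This is strictly smaller than $\alpha_*$ whenever $\alpha_*>\kappa$, which your choice allows. At a minimizing parameter $t_\delta$, the only control on the scale is
\[
-c_\delta\log\frac{t_\delta}{\delta}\le C\delta^{\nu'}+J\delta^2,\qquad c_\delta\ge\frac{\delta^{\alpha_*}}{2A_1},
\]
that is, $\log(\delta/t_\delta)\lesssim\delta^{\nu'-\alpha_*}$, which is unbounded when $\nu'<\alpha_*$. Then $t_\delta$ may be as small as $\delta\exp\bigl(-C\delta^{-(\alpha_*-\nu')}\bigr)$. The bound $\eta_s\widetilde u-\widetilde u\lesssim\delta^{\nu'}$ at scale $s\approx t_\delta$ then yields only a logarithmic modulus in $s$. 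No mechanism produces a ``slightly smaller exponent $\nu''$''. Either the exponents match and $t_\delta\ge a_K\delta$ with no loss, or comparability of scales is lost entirely.

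The fix is to force the bound's exponent to equal the exponent in $c_\delta$, i.e.\ take $\alpha_*\le\min\{\kappa,\alpha/2-\kappa\sigma\}$. The paper applies the proposition with exponent $\alpha_*=\nu$ itself and chooses $\kappa$ with $\nu<\kappa$ and $\alpha/2-\kappa\sigma>\nu$. Then the right-hand side is $O(\delta^\nu)$ while $c_\delta\ge\delta^\nu/(2A_1)$, so $t_\delta\ge a_K\delta$ and monotonicity gives $\eta_{a_K\delta}\widetilde u-\widetilde u\le(D_K+J)\delta^\nu$. Separately, passing from this averaged bound to a sup-over-balls (hence H\"older) bound is not just a change of constant and scale. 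It is a genuine lemma for psh functions, namely \cite[Theorem~3.4]{Zeriahi2020}, which the paper invokes at exactly this point, and you should cite it. With these two corrections your argument coincides with the paper's.
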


\begin{proof}
Fix
$0<\nu<
\min\left\{
\frac{1}{nq+1},
\frac{\alpha}{2(\sigma+1)}
\right\}.$
Choose $0<\kappa<1$ such that
$\nu<\kappa$ and $\frac{\alpha}{2}-\kappa\sigma>\nu.$
This is possible by the choice of $\nu$. Indeed, if $\sigma=0$, it is enough
to choose any $\kappa\in(\nu,1)$. If $\sigma>0$, the condition
$\nu<\frac{\alpha}{2(\sigma+1)}$
is equivalent to
$\nu<\frac{\alpha/2-\nu}{\sigma},$
so one can choose
$\nu<\kappa<
\min\left\{
1,
\frac{\alpha/2-\nu}{\sigma}
\right\}.$

 Let $r_K=\frac12 d_\beta(K,X_{\rm sing})$. Then $K\cap \overline{\Omega}\subset\Omega_{r_K}^{\rm reg}.$
We apply Proposition~\ref{prop:general-modulus-capacity-transfer} with
$r=r_K$, with exponent $\nu$, and with the above choice of $\kappa$. Then there
exist constants $C_K>0$ and $\delta_{\kappa}>0$ such that, for all
$0<\delta<\delta_{\kappa}$,
$$
\check u_\delta-u
\le
C_K
\left[
\delta^\nu
+
\mathcal H_\phi\left(\frac{\delta^\kappa}{2}\right)
\delta^{\alpha/2}
+
\delta^\kappa
\right]
\quad
\text{on }
K\cap\Omega_{2\delta}.
$$
Using assumption \eqref{mathcal_H_phi_3.4}
for $\delta>0$ sufficiently small, we obtain
$
\mathcal H_\phi\left(\frac{\delta^\kappa}{2}\right)
\delta^{\alpha/2}
\le
2^{\sigma}B\delta^{\alpha/2-\kappa\sigma}.
$
Since
$\kappa>\nu$ and $\frac{\alpha}{2}-\kappa\sigma>\nu,$
after decreasing $\delta_{\kappa}$ and increasing $C_K$, we get
\begin{equation}\label{eq:check-u-holder-bound}
\check u_\delta-u
\le
C_K\delta^\nu
\quad
\text{on }
K\cap\Omega_{2\delta}.
\end{equation}

We now pass from the estimate for $\check u_\delta$ to a H\"older estimate for
$u$ on $K$. Let
$\widetilde K:=\pi^{-1}(K).$
Since $\pi$ is biholomorphic over $X_{\rm reg}$,  $\widetilde\rho_E$ is smooth and bounded on
$\widetilde K$. Thus there exists a constant $C_E(K)>0$ such that
$\widetilde\rho_E\ge -C_E(K)$ on  $\widetilde K$.
Moreover, on $\widetilde K$ the distances $d_\tau$ and $d_\beta$ are uniformly
comparable. Thus there exists $L_K>1$ such that
\begin{equation}\label{eq:distance-comparison-K}
d_\tau(\widetilde x,\widetilde y)
\le
L_Kd_\beta(\pi(\widetilde x),\pi(\widetilde y)),
\qquad
\widetilde x,\widetilde y\in\widetilde K.
\end{equation}

Let $x\in K\cap\Omega_{2\delta}$ and write $\widetilde x:=\pi^{-1}(x)$.
By the definition of $\check u_\delta$, associated with the exponent $\nu$,
we have
$\check u_\delta(x)
=
\widetilde u_{c_\delta,\delta}(\widetilde x)
+
\delta^\nu\widetilde\rho_E(\widetilde x).$
Since $u(x)=\widetilde u(\widetilde x)$, estimate
\eqref{eq:check-u-holder-bound} gives
$
\widetilde u_{c_\delta,\delta}(\widetilde x)
-
\widetilde u(\widetilde x)
\le
C_K\delta^\nu
-
\delta^\nu\rho_E(\widetilde x).
$
Setting $D_K:= C_K+ C_E(K)$, we obtain
\begin{equation}\label{eq:kiselman-bound-K}
\widetilde u_{c_\delta,\delta}
-
\widetilde u
\le
D_K\delta^\nu
\quad
\text{on }
\widetilde K\cap\pi^{-1}(\Omega_{2\delta}).
\end{equation}
Recall that
$
\widetilde u_{c_\delta,\delta}(\widetilde x)
=
\inf_{0<t\le\delta}
\left[
\eta_t\widetilde u(\widetilde x)
+
Jt^2
-
J\delta^2
-
c_\delta\log\frac{t}{\delta}
\right],
$
where
$
c_\delta
=
\frac{\delta^\nu-J\delta}{A_1}\ge \frac{\delta^\nu}{2A_1}
$ from \eqref{c_delta}.
Fix
$\widetilde x\in \widetilde K\cap\pi^{-1}(\Omega_{2\delta}).$
Let $t_\delta\in(0,\delta]$ be a minimizing parameter. Then
$$
\widetilde u_{c_\delta,\delta}(\widetilde x)
=
\eta_{t_\delta}\widetilde u(\widetilde x)
+
Jt_\delta^2
-
J\delta^2
-
c_\delta\log\frac{t_\delta}{\delta}.
$$
Since the function
$t\mapsto \eta_t\widetilde u(\widetilde x)+Jt^2$
is increasing and converges to $\widetilde u(\widetilde x)$ as $t\to0^+$, we have
$
\eta_{t_\delta}\widetilde u(\widetilde x)
+
Jt_\delta^2
-
\widetilde u(\widetilde x)
\ge0.
$
Combining this with \eqref{eq:kiselman-bound-K}, we get
$
-c_\delta\log\frac{t_\delta}{\delta}
\le
D_K\delta^\nu+J\delta^2.
$
Since $c_\delta\ge (2A_1)^{-1}\delta^\nu$, there exists a constant
$A_K'>0$ such that
$
-\log\frac{t_\delta}{\delta}\le A_K'.
$
Thus, for
$
a_K:=e^{-A_K'}\in(0,1),
$
one has
$
t_\delta\ge a_K\delta.
$
By the monotonicity of
$t\mapsto\eta_t\widetilde u(\widetilde x)+Jt^2$, we have
$$
\eta_{a_K\delta}\widetilde u(\widetilde x)
+
Ja_K^2\delta^2
\le
\eta_{t_\delta}\widetilde u(\widetilde x)
+
Jt_\delta^2.
$$
Using again the defining equality for
$\widetilde u_{c_\delta,\delta}$, and using
$\log(t_\delta/\delta)\le0$, we obtain
$
\eta_{a_K\delta}\widetilde u(\widetilde x)
-
\widetilde u(\widetilde x)
\le
\widetilde u_{c_\delta,\delta}(\widetilde x)
-
\widetilde u(\widetilde x)
+
J\delta^2.
$
Together with \eqref{eq:kiselman-bound-K}, this gives
$$
\eta_{a_K\delta}\widetilde u
-
\widetilde u
\le
(D_K+J)\delta^\nu
\quad
\text{on }
\widetilde K\cap\pi^{-1}(\Omega_{2\delta}).
$$
Putting $s=a_K\delta$ for
$0<\delta<\delta_{\kappa}$, we obtain,
\begin{equation}\label{eq:average-holder-K}
\eta_s\widetilde u-\widetilde u
\le
(D_K+J) \frac{1}{a_K^\nu}s^\nu
\quad
\text{on }
\widetilde K\cap\pi^{-1}(\Omega_{2\delta}),
\end{equation}
By Zeriahi's result
\cite[Theorem~3.4]{Zeriahi2020}, estimate
\eqref{eq:average-holder-K} implies that there exists a constant $B_K>0$ such
that, for all
$\widetilde x,\widetilde y
\in
\widetilde K\cap\pi^{-1}(\Omega_{2\delta}),$
one has
$|\widetilde u(\widetilde x)-\widetilde u(\widetilde y)|
\le
B_Kd_\tau(\widetilde x,\widetilde y)^\nu.$
Using \eqref{eq:distance-comparison-K}, we obtain that for all
$x,y\in K\cap\Omega_{2\delta}$ with $
0<\delta<\delta_{\kappa}<1$, 
\begin{equation}\label{eq:interior-holder-K}
|u(x)-u(y)|
\le
B_KL_K^\nu d_\beta(x,y)^\nu .
\end{equation}

It remains to consider points close to the boundary. If
$K\cap\partial\Omega\neq\varnothing$, let $\delta_K>0$ be the constant given by
Proposition~\ref{near boundary K}. If $K\cap\partial\Omega=\varnothing$, choose
$\delta_K>0$ so small that
$d_\beta(K,\partial\Omega)>4\delta_K.$
Fix 
$
0<R_K<
\min\left\{\delta_{\kappa},\frac{\delta_K}{4}
\right\}.
$
Let $x,y\in K$ and set
$d_0:=d_\beta(x,y).$

First assume that $0<d_0<R_K$. Suppose that
$d_\beta(x,\partial\Omega)\ge2d_0$ and  $
d_\beta(y,\partial\Omega)\ge2d_0.$
Then $x,y\in\Omega_{2d_0}$. Applying
\eqref{eq:interior-holder-K} with $\delta=d_0$ gives
$$
|u(x)-u(y)|
\le
B_KL_K^\nu d_\beta(x,y)^\nu.
$$

It remains to consider the case where at least one of the two points is within
distance $2d_0$ from $\partial\Omega$. Up to exchanging $x$ and $y$, assume
$d_\beta(x,\partial\Omega)<2d_0.$
Then
$
d_\beta(y,\partial\Omega)
\le
d_\beta(y,x)+d_\beta(x,\partial\Omega)
<
3d_0.
$
Since $4d_0<\delta_K$, Proposition~\ref{near boundary K}, applied with
$s=4d_0$, gives
$
|u(x)-u(y)|
\le
C_{0}  \mathcal H_\phi\left(\frac{1}{2}d_\beta(K\cap\partial\Omega,S)\right) (4d_0)^{\alpha/2}.
$
Since
$
\nu<\frac{\alpha}{2}
$
and $d_0<1$, we obtain
$$
|u(x)-u(y)|
\le
4^{\frac{\alpha}{2}} C_{0}  \mathcal H_\phi\left(\frac{1}{2}d_\beta(K\cap\partial\Omega,S)\right) d_\beta(x,y)^\nu.
$$

Finally, if $d_\beta(x,y)\ge R_K$, then the boundedness of $u$ gives
$$
|u(x)-u(y)|
\le
2\|u\|_{L^\infty(\Omega)}
\le
2\|u\|_{L^\infty(\Omega)}R_K^{-\nu}
d_\beta(x,y)^\nu.
$$
Combining all cases, we obtain
$$
|u(x)-u(y)|
\le
C_{K,\nu}d_\beta(x,y)^\nu,
\qquad
x,y\in K,
$$
where one may take
$
C_{K,\nu}
:=
\max
\left\{
B_K L_K^{\nu},
4^{\frac{\alpha}{2}} C_{0}  \mathcal H_\phi\left(\frac{1}{2}d_\beta(K\cap\partial\Omega,S)\right), 2\|u\|_{L^\infty(\Omega)}
R_K^{-\nu}
\right\}.
$
The constant depends on the
local geometry near $K$, on the stability data
$p,\|f\|_{L^p(\Omega)}$, and on the lower bound of $\widetilde\rho_E$ on
$\pi^{-1}(K)$. It may blow up when $K$ approaches
$X_{\rm sing}$.
\end{proof}

We now combine the preceding estimates to obtain the main H\"older regularity
result.

\begin{theorem}
\label{thm:finite-singular-boundary-holder}
Let $u$ be the solution given by Theorem~\ref{main thm}. Assume that
$\phi$ is locally $\alpha$-H\"older continuous on
$\partial\Omega\cap X_{\rm reg}$ and $S$ is a holomorphic peak set for
$\Omega$. Assume moreover that the local $\alpha$-H\"older constants of
$\phi$ blow up at most polynomially near $S$: there exist constants
$C>0$, $\sigma\ge0$, and $\varepsilon_0>0$ such that
$$
|\phi(\xi)-\phi(\eta)|
\le
C\,\max\{d_\beta(\xi,S),d_\beta(\eta,S)\}^{-\sigma}
d_\beta(\xi,\eta)^\alpha
$$
whenever $\xi,\eta\in\partial\Omega\cap X_{\rm reg}$ and
$0<\max\{d_\beta(\xi,S),d_\beta(\eta,S)\}<\varepsilon_0$.
Then, for every exponent $\alpha_*$ satisfying
$0<\alpha_*<
\min\left\{
\frac{1}{nq+1},
\frac{\alpha}{2(\sigma+1)}
\right\}$,
the solution $u$ is locally $\alpha_*$-H\"older continuous on
$\Omega\cap X_{\rm reg}$.
\end{theorem}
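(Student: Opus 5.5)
The plan is to reduce this statement to Theorem~\ref{thm:holder-peak-set}. The only thing to check is that the pointwise polynomial blow-up hypothesis on $\phi$ yields the bound \eqref{mathcal_H_phi_3.4}, namely $\mathcal H_\phi(\lambda)\le B\lambda^{-\sigma}$ for $0<\lambda<\lambda_0$. Once this holds, fix $x_0\in\Omega\cap X_{\rm reg}$ and a compact neighborhood $K\Subset\Omega\cap X_{\rm reg}$ of $x_0$. Theorem~\ref{thm:holder-peak-set} then gives $|u(x)-u(y)|\le C_{K,\alpha_*}d_\beta(x,y)^{\alpha_*}$ on $K$ for every $\alpha_*$ in the stated range, which is exactly local $\alpha_*$-H\"older continuity on $\Omega\cap X_{\rm reg}$.

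To bound $\mathcal H_\phi(\lambda)$, fix $\lambda<\lambda_0:=\min\{\varepsilon_0,1\}/2$, a point $\xi\in\Gamma_\lambda$ and a point $\eta\in\partial\Omega$ with $\eta\ne\xi$. I would split into three cases.

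\emph{Case 1:} $d_\beta(\xi,\eta)\ge \lambda/2$. Boundedness of $\phi$ gives
\[
|\phi(\xi)-\phi(\eta)|\le 2\|\phi\|_{C^0}(2/\lambda)^\alpha d_\beta(\xi,\eta)^\alpha .
\]
Since $\alpha\le 1$, the factor $\lambda^{-\alpha}$ is at most $\lambda^{-\sigma}$ when $\sigma\ge\alpha$. When $\sigma<\alpha$, this case needs more care; see the last paragraph.

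\emph{Case 2:} $d_\beta(\xi,\eta)<\lambda/2$ and $\max\{d_\beta(\xi,S),d_\beta(\eta,S)\}<\varepsilon_0$. Then $d_\beta(\eta,S)\ge\lambda/2>0$, so $\eta\in X_{\rm reg}$ and the hypothesis applies. Because $d_\beta(\xi,S)\ge\lambda$, the maximum is at least $\lambda$, and we obtain the constant $C\lambda^{-\sigma}$.

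\emph{Case 3:} $d_\beta(\xi,\eta)<\lambda/2$ and both $\xi,\eta$ lie at distance at least $\varepsilon_0/2$ from $S$ (if $d_\beta(\xi,S)\ge\varepsilon_0$, then $d_\beta(\eta,S)\ge\varepsilon_0-\lambda/2\ge\varepsilon_0/2$). Both points then lie in the fixed compact set $\Gamma_{\varepsilon_0/2}\subset\partial\Omega\cap X_{\rm reg}$. Local H\"older continuity together with Lemma~\ref{lem:local-holder-boundary-constant} gives a constant independent of $\lambda$.

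The main obstacle is Case 1 when $\sigma<\alpha$, since the crude bound $\lambda^{-\alpha}$ is worse than $\lambda^{-\sigma}$. I would first try to remove this case by a chaining argument. Connect $\xi$ to $\eta$ through intermediate boundary points with geometrically decreasing distance to $S$, apply the hypothesis on each link, and sum; the series is controlled by $\lambda^{-\sigma}d_\beta(\xi,\eta)^\alpha$. If chaining proves awkward on $\partial\Omega$, I would instead note that only $\mathcal H_\phi(\lambda)\le B\lambda^{-\max\{\sigma,\alpha\}}$ is guaranteed directly. In that case I would re-run the exponent bookkeeping of Theorem~\ref{thm:holder-peak-set}, where what is really needed is $\mathcal H_\phi(\delta^\kappa/2)\delta^{\alpha/2}\lesssim\delta^{\nu}$. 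Since $\alpha_*<\alpha/(2(\sigma+1))$ and $\kappa$ may be chosen close to $\nu$, I would check whether the choice of $\kappa$ can absorb the loss. I would also use that in Case 1 the factor $d_\beta(\xi,\eta)^\alpha\ge(\lambda/2)^\alpha$ compensates for the crude bound.
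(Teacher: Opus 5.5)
\textbf{There is a genuine gap: your Case 1 with $\sigma<\alpha$ is never closed.} The overall plan is right and matches the paper: reduce to Theorem~\ref{thm:holder-peak-set} by proving $\mathcal H_\phi(\lambda)\le B\lambda^{-\sigma}$. Your Cases 2 and 3 are fine. But neither proposed fix for Case 1 works as stated.

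\emph{The fallback loses exponent range.} In Theorem~\ref{thm:holder-peak-set} you need $\kappa>\nu$, to absorb the $\delta^\kappa$ term from the peak-function barrier. You also need $\alpha/2-\kappa\sigma'>\nu$. Together these force $\nu<\alpha/(2(\sigma'+1))$. With $\sigma'=\max\{\sigma,\alpha\}=\alpha>\sigma$, this range is strictly smaller than the claimed one, and no choice of $\kappa$ absorbs the loss.

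\emph{The chaining argument is unnecessary and not carried out.} It would require building chains of boundary points with controlled distances to $S$, which you have not done.

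\textbf{The missing observation is that the hypothesis places no restriction on $d_\beta(\xi,\eta)$.} It applies to any two points of $\partial\Omega\cap X_{\rm reg}$ with $\max\{d_\beta(\xi,S),d_\beta(\eta,S)\}<\varepsilon_0$. For $\xi\in\Gamma_\lambda$ that maximum is at least $\lambda$, so you get the constant $C\lambda^{-\sigma}$ directly, whether or not $d_\beta(\xi,\eta)\ge\lambda/2$. So split on this maximum versus $\varepsilon_0$, not on $d_\beta(\xi,\eta)$ versus $\lambda/2$.

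If the maximum is at least $\varepsilon_0$, there are two sub-cases, each with a $\lambda$-independent constant:
\begin{itemize}
\item If $d_\beta(\xi,S)\ge\varepsilon_0/2$, then $\xi\in\Gamma_{\varepsilon_0/2}$, and Lemma~\ref{lem:local-holder-boundary-constant} gives a constant valid for every $\eta\in\partial\Omega$.
\item Otherwise $d_\beta(\eta,S)\ge\varepsilon_0$, hence $d_\beta(\xi,\eta)\ge\varepsilon_0/2$, and boundedness of $\phi$ suffices.
\end{itemize}
For $\lambda<1$, any $\lambda$-independent constant is at most $\mathrm{const}\cdot\lambda^{-\sigma}$.

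\textbf{The hypothesis is not literally available only when $\eta\in S$.} This is exactly your troublesome instance, since then $d_\beta(\xi,\eta)\ge\lambda$. Approximate $\eta$ by points $\eta_j\in\partial\Omega\cap X_{\rm reg}$, which is dense in $\partial\Omega$. Apply the estimate above to each pair $(\xi,\eta_j)$, with a constant independent of $j$. Then let $j\to\infty$, using continuity of $\phi$.

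This is how the paper obtains $\mathcal H_\phi(\lambda)\le B\lambda^{-\sigma}$ for all $0<\lambda<\min\{1,\varepsilon_0\}$, with no condition relating $\sigma$ and $\alpha$. Theorem~\ref{thm:holder-peak-set} then gives the full range $\alpha_*<\min\{\frac{1}{nq+1},\frac{\alpha}{2(\sigma+1)}\}$.
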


\begin{proof}
Let $\Gamma_\lambda=\{\xi\in\partial\Omega:\ d_\beta(\xi,S)\ge\lambda\}$. 
For $0<\lambda<\varepsilon_0$, let
$\xi\in\Gamma_\lambda\cap\partial\Omega\cap X_{\rm reg}$ and
$\eta\in\partial\Omega\cap X_{\rm reg}$ with $\xi\ne\eta$.

Assume first that $\max\{d_\beta(\xi,S),d_\beta(\eta,S)\}<\varepsilon_0$.
Then the polynomial blow-up assumption gives
$|\phi(\xi)-\phi(\eta)|
\le
C\,\max\{d_\beta(\xi,S),d_\beta(\eta,S)\}^{-\sigma}
d_\beta(\xi,\eta)^\alpha$. 
Since $\xi\in\Gamma_\lambda$, we have
$\max\{d_\beta(\xi,S),d_\beta(\eta,S)\}\ge d_\beta(\xi,S)\ge\lambda$
and hence
$|\phi(\xi)-\phi(\eta)|
\le
C\lambda^{-\sigma}d_\beta(\xi,\eta)^\alpha$.

Assume now that
$\max\{d_\beta(\xi,S),d_\beta(\eta,S)\}\ge\varepsilon_0$.
If $d_\beta(\xi,S)\ge\varepsilon_0/2$, then
$\xi\in\Gamma_{\varepsilon_0/2}$. Since
$\Gamma_{\varepsilon_0/2}\Subset\partial\Omega\cap X_{\rm reg}$ and
$\phi$ is locally $\alpha$-H\"older continuous on
$\partial\Omega\cap X_{\rm reg}$, as in
Lemma~\ref{lem:local-holder-boundary-constant} there exists a constant
$C_{\varepsilon_0}>0$ such that
$|\phi(\xi)-\phi(\eta)|
\le
C_{\varepsilon_0}d_\beta(\xi,\eta)^\alpha$
for $\xi\in\Gamma_{\varepsilon_0/2}$ and all $\eta\in\partial\Omega$. Hence, for
$0<\lambda<1$,
$|\phi(\xi)-\phi(\eta)|
\le
C_{\varepsilon_0}\lambda^{-\sigma}d_\beta(\xi,\eta)^\alpha$.

It remains to consider the case $d_\beta(\xi,S)<\varepsilon_0/2$. Since
$\max\{d_\beta(\xi,S),d_\beta(\eta,S)\}\ge\varepsilon_0$, we have
$d_\beta(\eta,S)\ge\varepsilon_0$. Therefore
$d_\beta(\xi,\eta)
\ge
d_\beta(\eta,S)-d_\beta(\xi,S)
\ge
\frac{\varepsilon_0}{2}$.
Thus
$$
|\phi(\xi)-\phi(\eta)|
\le
2\|\phi\|_{C^0(\partial\Omega)}
\le
2\|\phi\|_{C^0(\partial\Omega)}
\left(\frac{2}{\varepsilon_0}\right)^\alpha
d_\beta(\xi,\eta)^\alpha
$$
For $0<\lambda<1$, this gives
$|\phi(\xi)-\phi(\eta)|
\le
C'_{\varepsilon_0}\lambda^{-\sigma}d_\beta(\xi,\eta)^\alpha$.

It remains to remove the restriction $\eta\in \partial\Omega\cap X_{\mathrm{reg}}$.
Let now $\xi\in \Gamma_\lambda$ and let $\eta\in S$. Since
$\partial\Omega\cap X_{\mathrm{reg}}$ is dense in $\partial\Omega$, we can choose
a sequence $\eta_j\in \partial\Omega\cap X_{\mathrm{reg}}$ such that
$\eta_j\to \eta$. The estimate proved above, applied to the pair
$(\xi,\eta_j)$, gives
$$
|\phi(\xi)-\phi(\eta_j)|
\le C'\lambda^{-\sigma} d_\beta(\xi,\eta_j)^\alpha ,
$$
where $C'>0$ is independent of $j$ and $\lambda$. Since
$\phi\in C^0(\partial\Omega)$ and $d_\beta(\xi,\eta_j)\to d_\beta(\xi,\eta)$,
letting $j\to+\infty$ yields
$$
|\phi(\xi)-\phi(\eta)|
\le C'\lambda^{-\sigma} d_\beta(\xi,\eta)^\alpha .
$$
Thus the same estimate holds for all $\xi\in \Gamma_\lambda$ and all
$\eta\in \partial\Omega$ with $\xi\ne\eta$.

Combining the preceding cases, we obtain
$\mathcal H_\phi(\lambda)\le B\lambda^{-\sigma}$
for all $0<\lambda<\min \{1,\varepsilon_0\}$.
By Theorem~\ref{thm:holder-peak-set}, for every exponent $\alpha_*$ satisfying
$0<\alpha_*<
\min\left\{
\frac{1}{nq+1},
\frac{\alpha}{2(\sigma+1)}
\right\}$, $u$ is locally $\alpha_*$-H\"older continuous on
$\Omega\cap X_{\rm reg}$.
\end{proof}

Corollary~\ref{finite-cor} is an immediate
consequence of Theorem~\ref{thm:finite-singular-boundary-holder} and
Proposition~\ref{prop:finite-singular-boundary-peak}. The following example shows that the result applies even when the boundary datum is not H\"older continuous at the singular boundary point.

\begin{example}\label{log_example}
Assume that
$S=\partial\Omega\cap X_{\rm sing}=\{a\}.$
Set  $D=\max_{x,y\in \overline\Omega} d_\beta(x,y)$. Define
$$
\phi(x):=\frac{1}{\log\bigl(2D/d_\beta(x,a)\bigr)} \quad\text{for }x\in\partial\Omega\setminus\{a\}, \qquad \phi(a):=0.
$$
Then $\phi\in C^0(\partial\Omega)$. Indeed, as $x\to a$ on
$\partial\Omega\setminus{a}$, one has
$\phi(x)
=
\frac{1}{\log\bigl(2D/d_\beta(x,a)\bigr)}
\rightarrow 0
=
\phi(a).$
Moreover, $\phi$ is locally Lipschitz on $\partial\Omega\cap X_{\rm reg}$. 
On the other hand, $\phi$ is not H\"older continuous of any positive order at
$a$. Indeed, for every $\gamma>0$,
$$
\frac{|\phi(x)-\phi(a)|}{d_\beta(x,a)^\gamma}
=
\frac{1}
{d_\beta(x,a)^\gamma\log\bigl(2D/d_\beta(x,a)\bigr)}
\longrightarrow +\infty
\qquad
\text{as }x\to a.
$$
Thus $\phi$ has only logarithmic continuity at the singular boundary point
$a$.

We now verify the polynomial degeneration condition. Let
$\xi,\eta\in\partial\Omega\cap X_{\rm reg}$ and set
$R:=\max\{d_\beta(\xi,a),d_\beta(\eta,a)\}$.
Choose $\varepsilon<D$. Assume that $0<R<\varepsilon$.

If $d_\beta(\xi,\eta)\ge \frac{R}{2}$, since $\phi$ is bounded on $\partial\Omega$, we have
$$
|\phi(\xi)-\phi(\eta)|
\le
2\|\phi\|_{C^0(\partial\Omega)}
\le
4\|\phi\|_{C^0(\partial\Omega)}
R^{-1}d_\beta(\xi,\eta).
$$

It remains to consider the case
$d_\beta(\xi,\eta)<\frac{R}{2}$. 
By the triangle inequality,
$|d_\beta(\xi,a)-d_\beta(\eta,a)|\le d_\beta(\xi,\eta)$. Hence, 
$\min\{d_\beta(\xi,a),d_\beta(\eta,a)\}
\ge R-d_\beta(\xi,\eta)>\frac{R}{2}$. 
For $0<t<\varepsilon$, set
$h(t):=\frac{1}{\log(2D/t)}$. 
Then $h'(t)=\frac{1}{t\log^2(2D/t)}$. 
For $t\in [R/2,R]\subset (0,\varepsilon)$, we have
$h'(t)
\le
\frac{2}{R\log^2(2D/\varepsilon)}$.
By the mean value theorem and $|\phi(\xi)-\phi(\eta)|
=
|h(d_\beta(\xi,a))-h(d_\beta(\eta,a))|$, we obtain
\begin{align*}
|\phi(\xi)-\phi(\eta)|
\le
\frac{2}{\log^2(2D/\varepsilon)}
R^{-1}|d_\beta(\xi,a)-d_\beta(\eta,a)|
\le
\frac{2}{\log^2(2D/\varepsilon)}
R^{-1}d_\beta(\xi,\eta)
\end{align*}

Therefore, after setting
$A:=
\max\left\{
4\|\phi\|_{C^0(\partial\Omega)},
\frac{2}{\log^2(2D/\varepsilon)}
\right\}$,
we obtain
$$
|\phi(\xi)-\phi(\eta)|
\le
A\max\{d_\beta(\xi,a),d_\beta(\eta,a)\}^{-1}
d_\beta(\xi,\eta)
$$
whenever $\xi,\eta\in\partial\Omega\cap X_{\rm reg}$ and
$0<\max\{d_\beta(\xi,a),d_\beta(\eta,a)\}<\varepsilon$.
Thus $\phi$ satisfies the polynomial degeneration condition in
Theorem~\ref{thm:finite-singular-boundary-holder} with
$\alpha=1$ and $\sigma=1$.
\end{example}

\subsection{Other moduli of continuity}
---We can similarly treat other moduli of continuity. In this final section, we briefly explain the case of ``logarithmic continuity''.
Fix $\nu>0$. After choosing $t_\nu>0$ sufficiently small, the function
$$
\omega_\nu(t):=(\log(1/t))^{-\nu}
\quad 0<t<t_\nu
$$
is increasing and concave. We call $\omega_\nu$, and any positive multiple of
it, a logarithmic modulus of continuity of order $\nu$.

For a general singular boundary set
$S=\partial\Omega\cap X_{\rm sing}$, Proposition~\ref{prop:logarithmic-psh-cutoff}
provides logarithmic plurisubharmonic cutoff functions. In particular, no peak-set condition on $S$ is required. Applying the H\"older
argument with $\omega_\nu$ in place of the H\"older modulus gives the following
logarithmic version; we omit the repetition of the proof.

\begin{theorem}
\label{thm:intro-logarithmic-regularity}
Let $u$ be the solution given by Theorem~\ref{main thm}.
Assume that $\phi$ has local modulus of continuity $\omega_\nu$ on
$\partial\Omega\cap X_{\rm reg}$. Assume moreover that the local
$\omega_\nu$-moduli of $\phi$ blow up at most logarithmically near $S$:
there exist constants $C>0$, $\sigma\ge0$, and $\varepsilon_0>0$ such that
$$
|\phi(\xi)-\phi(\eta)|
\le
C
\bigl(\log(1/\max\{d_\beta(\xi,S),d_\beta(\eta,S)\})\bigr)^\sigma
\omega_\nu(d_\beta(\xi,\eta))
$$
whenever $\xi,\eta\in\partial\Omega\cap X_{\rm reg}$, 
$0<\max\{d_\beta(\xi,S),d_\beta(\eta,S)\}<\varepsilon_0$ and $0<d_\beta(\xi,\eta)< t_\nu.$
Then for every compact set
$K\Subset\overline\Omega\cap X_{\rm reg}$ and every exponent
$0<\nu_*<
\min\left\{
1, \frac{\nu}{\sigma+1}
\right\},$
there exists a constant $C_{K,\nu_*}>0$ such that
$$
|u(x)-u(y)|
\le
\frac{C_{K,\nu_*}}
{\bigl(\log(1/d_\beta(x,y))\bigr)^{\nu_*}}
$$
for all $x,y\in K$ with $d_\beta(x,y)$ sufficiently small.
\end{theorem}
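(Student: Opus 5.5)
The proof will follow the scheme of Theorem~\ref{thm:holder-peak-set}, with the H\"older modulus $t^\alpha$ replaced by $\omega_\nu$ and the peak-function barrier of Proposition~\ref{prop:peak-function-barrier} replaced by the logarithmic plurisubharmonic cutoff of Proposition~\ref{prop:logarithmic-psh-cutoff}.

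\textbf{Step 1: the logarithmic constant.} As in the proof of Theorem~\ref{thm:finite-singular-boundary-holder}, I would introduce
\[
\mathcal H^{\log}_\phi(\lambda):=1+\sup_{\xi\in\Gamma_\lambda,\ \eta\in\partial\Omega,\ 0<d_\beta(\xi,\eta)<t_\nu}\frac{|\phi(\xi)-\phi(\eta)|}{\omega_\nu(d_\beta(\xi,\eta))}.
\]
The hypothesis, together with boundedness of $\phi$ for separated pairs and density of $\partial\Omega\cap X_{\rm reg}$, should give
\[
\mathcal H^{\log}_\phi(\lambda)\le B\,(\log(1/\lambda))^{\sigma}\qquad\text{for small }\lambda.
\]

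\textbf{Step 2: boundary barriers.} I would redo Lemmas~\ref{lem-homogeneous-boundary-barrier} and \ref{lem-local-holder-boundary-barrier} with the inf-convolution
\[
\psi_+(\eta)=\sup_{\xi\in\Gamma}\bigl\{\phi(\xi)-A\,\omega_\nu(d_\beta(\xi,\eta))\bigr\}.
\]
Concavity of $\omega_\nu$ gives subadditivity, so $\psi_+$ has modulus $A\omega_\nu$. The barriers then have modulus $C A\,\omega_\nu(t^{1/2})$, and $\omega_\nu(t^{1/2})=2^\nu\omega_\nu(t)$. Proposition~\ref{near boundary K} therefore holds with $s^{\alpha/2}$ replaced by $\mathcal H^{\log}_\phi\,\omega_\nu(s)$.

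\textbf{Step 3: the regularization estimate.} In Proposition~\ref{prop:general-modulus-capacity-transfer}, I would use the same Kiselman--Legendre regularization $\check u_\delta$ with an interior H\"older exponent $\alpha_*<1/(nq+1)$. The cutoff $\psi_\lambda$ from Proposition~\ref{prop:logarithmic-psh-cutoff} should satisfy $\psi_\lambda\le -c_0$ on $N_\lambda S$ and $\psi_\lambda\ge -c_r\,\omega_1(\lambda)$ on $\Omega_r^{\rm reg}$, i.e.\ a loss of order $(\log 1/\lambda)^{-1}$ rather than $\lambda$. This is why no peak set is needed. I would choose $\lambda_\delta=\delta^{\kappa}$ with fixed $0<\kappa<1$, so that $\log(1/\lambda_\delta)\simeq\kappa\log(1/\delta)$. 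The stability argument then yields, on $\Omega_{2\delta}\cap\Omega_r^{\rm reg}$,
\[
\check u_\delta-u\le C_r\Bigl[\delta^{\alpha_*}+(\log 1/\delta)^{\sigma-\nu}+(\log 1/\delta)^{-1}\Bigr].
\]
Since $\nu_*<\nu/(\sigma+1)\le\nu-\sigma$ is not automatic, I expect the cutoff's decay to have to be balanced. I would instead choose $\lambda_\delta$ with $\log(1/\lambda_\delta)=(\log 1/\delta)^{\theta}$ and optimize $\theta$, giving the rate $\max\{\theta\sigma-\nu,\,-\theta\}$. This balancing at $\theta=\nu/(\sigma+1)$ produces exactly the exponent $\nu/(\sigma+1)$. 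The cap $\nu_*<1$ should come from the cutoff's rate (order $1$ in $\log$), which I would confirm against the statement of Proposition~\ref{prop:logarithmic-psh-cutoff}.

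\textbf{Step 4: conversion to a modulus for $u$.} I would pass from $\check u_\delta-u\le C\omega_{\nu_*}(\delta)$ to $\eta_{s}\widetilde u-\widetilde u\le C\omega_{\nu_*}(s)$ as in Theorem~\ref{thm:holder-peak-set}. The Kiselman step needs $c_\delta\simeq\omega_{\nu_*}(\delta)$ in place of $\delta^{\alpha_*}$; this dominates $J\delta$, and the minimizing parameter again satisfies $t_\delta\ge a\delta$. Zeriahi's theorem \cite[Theorem~3.4]{Zeriahi2020} holds for general moduli of continuity, so it gives an $\omega_{\nu_*}$ modulus on compact subsets away from the boundary. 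Near the boundary, Step 2 with $\nu_*<\nu$ handles the remaining pairs, and large separations are trivial.

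\textbf{Main obstacle.} The hard part is Step 3: checking that the sizes of the cutoff and of the boundary modulus constant can be balanced to give exactly $\nu/(\sigma+1)$. This depends on the precise decay in Proposition~\ref{prop:logarithmic-psh-cutoff}, so I would start by tracking that rate explicitly.
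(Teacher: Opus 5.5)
Your proposal is correct and matches the paper's omitted proof ("the H\"older argument with $\omega_\nu$ in place of the H\"older modulus"): the peak barrier is replaced by the logarithmic cutoff of Proposition~\ref{prop:logarithmic-psh-cutoff}, the identity $\omega_\nu(t^{1/2})=2^\nu\omega_\nu(t)$ removes the halving of the exponent, and the choice $\log(1/\lambda_\delta)=(\log 1/\delta)^{\theta}$ with $\theta$ near $\nu/(\sigma+1)$ and $\theta<1$ (forced by $8\delta\le\lambda_\delta$) gives exactly $\min\{1,\nu/(\sigma+1)\}$. One thing to tidy: take the Kiselman parameter and the coefficient of $\widetilde\rho_E$ of order $\omega_{\nu_*}(\delta)$ already in Step 3, not $\delta^{\alpha_*}$ there and $\omega_{\nu_*}(\delta)$ in Step 4; this costs nothing because the stability and boundary estimates in Step 3 only use $\check u_\delta\le \pi_*(\eta_\delta\widetilde u)$.
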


\begin{prop}
\label{prop:logarithmic-psh-cutoff}
Let $S=\partial\Omega \cap X_{\rm sing}$. For $r>0$, recall $\Omega_r^{\rm reg}
=
\{x\in \overline{\Omega}:\ d_\beta(x,X_{\rm sing})\ge r\}.$
Then there exist constants $C_r>0$ and $\lambda_r>0$ such that, for every
$0<\lambda<\lambda_r$, there exists a function
$\chi_\lambda\in{\rm PSH}(\Omega)\cap C^0(\overline\Omega)$ satisfying
$$
-1\le \chi_\lambda\le 0
\;\text{on }\Omega,\quad
\chi_\lambda=-1
\;
\text{on }\Omega\cap N_\lambda S,\quad
\chi_\lambda\ge
-\frac{C_r}{\log(1/\lambda)}
\;
\text{on }\Omega_r^{\rm reg},
$$
where $N_\lambda S:=\{x\in X:\ d_\beta(x,S)<\lambda\}.$
\end{prop}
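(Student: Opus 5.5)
The cutoff will be a capped, rescaled logarithm of a plurisubharmonic function that vanishes on $X_{\rm sing}$. Since $\overline\Omega$ is compact and $X$ is Stein, Cartan's Theorem~A applied to the ideal sheaf of $X_{\rm sing}$ gives finitely many holomorphic functions $g_1,\ldots,g_m$ on a neighborhood $\widehat\Omega$ of $\overline\Omega$ with common zero set $X_{\rm sing}\cap\widehat\Omega$. Set
$$
w:=\log\Bigl(\sum_{j=1}^m|g_j|^2\Bigr)-C_1\le 0
$$
on $\overline\Omega$, where $C_1$ is a large constant. Then $w$ is plurisubharmonic, and it is continuous with values in $[-\infty,0]$.

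The key quantitative input is a comparison of $w$ with the distance to the singular set.

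\begin{enumerate}
\item \emph{Upper bound near $S$.} Each $g_j$ is holomorphic, hence Lipschitz for $d_\beta$ near $\overline\Omega$ (computed in local embeddings, where $d_\beta$ is comparable to the ambient distance), and it vanishes on $S\subset X_{\rm sing}$. Therefore $\sum_j|g_j|^2\le A\,d_\beta(x,S)^2$, which gives $w\le 2\log\lambda+C_2$ on $N_\lambda S$.
\item \emph{Lower bound on $\Omega_r^{\rm reg}$.} This set is compact and disjoint from $X_{\rm sing}$, so $\sum_j|g_j|^2\ge m_r>0$ there. Hence $w\ge -C_r'$ on $\Omega_r^{\rm reg}$. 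No Łojasiewicz inequality is needed, because $r$ is fixed.
\end{enumerate}

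Now take $\lambda_r$ small enough that $L_\lambda:=\log(1/\lambda)$ satisfies $2\log\lambda+C_2\le -L_\lambda$, i.e.\ $L_\lambda\ge C_2$, and also $L_\lambda\ge C_r'$. Define
$$
\chi_\lambda:=\max\Bigl\{\frac{w}{L_\lambda},\,-1\Bigr\}.
$$
This is a maximum of plurisubharmonic functions, hence plurisubharmonic, and it is continuous on $\overline\Omega$ with $-1\le\chi_\lambda\le 0$. On $\Omega\cap N_\lambda S$ we have $w/L_\lambda\le -1$, so $\chi_\lambda=-1$. On $\Omega_r^{\rm reg}$ we get $\chi_\lambda\ge -C_r'/\log(1/\lambda)$, as required.

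The step needing the most care is step 1: establishing $\sum_j|g_j|^2\lesssim d_\beta(\cdot,S)^2$ uniformly near $S$ on the singular space. This is handled by covering $S$ with finitely many local embeddings, in each of which $g_j$ extends to an ambient holomorphic, hence locally Lipschitz, function. The remaining steps are compactness arguments. Note also that the same construction could use $X_{\rm sing}$ in place of $S$, since the interior singular points are allowed to be $-\infty$ points of $w$.
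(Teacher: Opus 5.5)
Your proposal is correct and follows the paper's proof essentially step by step. Both take generators of the ideal of $X_{\rm sing}$ on a Stein neighborhood of $\overline\Omega$ via Cartan's Theorem~A, form the capped function $\max\{\Phi/L_\lambda,-1\}$ with $\Phi$ a normalized logarithm of $|g|$, bound $|g|$ near $S$ by a Lipschitz estimate, and bound it below on $\Omega_r^{\rm reg}$ by compactness. Your only changes are cosmetic: you use $\log\sum_j|g_j|^2$ and normalize directly by $L_\lambda=\log(1/\lambda)$, letting the factor $2$ absorb the constant, whereas the paper takes $L_\lambda=\log\frac{R}{A\lambda}$ and adjusts constants at the end.

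One small correction to your justification of the Lipschitz step: you only need the one-sided inequality $|\iota(x)-\iota(y)|\lesssim d_\beta(x,y)$ in local embeddings, and this always holds. Two-sided comparability of $d_\beta$ with the ambient distance, as you claimed, can fail at singular points; at a cusp, for example, the inner and outer metrics are not bi-Lipschitz equivalent.
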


\begin{proof}
Choose a Stein open neighborhood $U$ of $\overline{\Omega}$ in $X$. Since
$X_{\mathrm{sing}}\cap U$ is a closed analytic subset of $U$, the Oka--Cartan
coherence theorem for ideal sheaves of analytic sets implies that
$\mathcal I_{X_{\mathrm{sing}}\cap U}$ is coherent
(see \cite[Chapter~4.2, Fundamental Theorem]{GrauertRemmert1984}).
Since $U$ is Stein, Cartan's Theorem~A implies that, for every $x\in U$, the
stalk $\mathcal I_{X_{\mathrm{sing}}\cap U,x}$ is generated, as an
$\mathcal O_{U,x}$-module, by the germs at $x$ of global sections of
$\mathcal I_{X_{\mathrm{sing}}\cap U}$
(see \cite[p.~124, Fundamental Theorem]{GrauertRemmert1979}). By the
finite-type lemma for coherent sheaves
\cite[Annex~3.1]{GrauertRemmert1984}, this generation holds in a neighborhood
of each point. Hence, by the compactness of $\overline{\Omega}$, after replacing
$U$ by a smaller Stein neighborhood of $\overline{\Omega}$ if necessary, there
exist holomorphic functions $g_1,\ldots,g_N\in\mathcal O(U)$ which generate
$\mathcal I_{X_{\mathrm{sing}}\cap U}$ on $U$. Consequently,
$
X_{\mathrm{sing}}\cap U
=
\{g_1=\cdots=g_N=0\}.
$
Set
$|g|(x):=\left(\sum_{j=1}^N |g_j(x)|^2\right)^{1/2}.$
Choose $R>0$ such that
$R>\sup_{\overline{\Omega}}|g|.$
Then
$\Phi(x)
:=
\log\frac{|g(x)|}{R}$
is plurisubharmonic on $U$, hence on $\Omega$, and satisfies
$\Phi\leq 0$ on  $\Omega$.

Since each $g_j$ vanishes on $X_{\mathrm{sing}}\cap U$, the local
Lipschitz estimate for holomorphic functions in local embeddings, together
with a finite covering of $\overline{\Omega}$, gives a constant $A>0$ such
that
$|g(x)|\le A\,d_\beta(x,X_{\mathrm{sing}})
$
for all $x\in\overline{\Omega}$.

Since $S\subset X_{\mathrm{sing}}$,
if $x\in \Omega\cap N_{\lambda}S$,
then
$d_\beta(x,X_{\mathrm{sing}})
\leq
d_\beta(x,S)
<
\lambda.$
Therefore,
$|g(x)|\leq A\lambda$
for all $x\in \Omega\cap N_{\lambda}S$.
For
$0<\lambda<\frac{R}{A},$
define
$L_\lambda:=\log\frac{R}{A\lambda}.$
Then
$L_\lambda>0.$
Define
$\chi_\lambda:=\max\left\{\frac{\Phi}{L_\lambda},-1\right\}.$
Then $\chi_\lambda$ is plurisubharmonic on $\Omega$. Moreover, since $\Phi\leq 0$, one has
$-1\leq \chi_\lambda\leq 0$ on  $\Omega$.
If $x\in \Omega\cap N_{\lambda}S,$
then $|g(x)|\leq A\lambda,$
and hence
$\Phi(x)
=
\log\frac{|g(x)|}{R}
\leq
\log\frac{A\lambda}{R}
=
-L_\lambda.$
Consequently,
$\frac{\Phi(x)}{L_\lambda}\leq -1,$
so
$\chi_\lambda(x)=-1$ for all
$x\in \Omega\cap N_{\lambda}S$.

Since
$ \Omega_r^{\rm reg}$
is compact and
$\{|g|=0\}\cap U
=
X_{\mathrm{sing}}\cap U,$
we have
$
m_r
:=
\inf_{ \Omega_r^{\rm reg}} |g|
>
0.
$
Therefore, for every $x\in  \Omega_r^{\rm reg}$,
$
\Phi(x)
=
\log\frac{|g(x)|}{R}
\geq
\log\frac{m_r}{R}.
$
Set $B_r:=\log\frac{R}{m_r}.$
Then
$
B_r>0
$
and
$
\Phi(x)\geq -B_r
$ for all $x\in  \Omega_r^{\rm reg}$.
Hence
$
\frac{\Phi(x)}{L_\lambda}
\geq
-\frac{B_r}{L_\lambda}.
$
After decreasing $\lambda_r>0$ if necessary, we may assume that
$L_\lambda\geq B_r$ for all $0<\lambda<\lambda_r$.
Thus, for $0<\lambda<\lambda_r$,
$\chi_\lambda(x)
=
\max\left\{
\frac{\Phi(x)}{L_\lambda},
-1
\right\}
\ge
-\frac{B_r}{L_\lambda}=\frac{-B_r }{\log (R/A\lambda)}$ for all 
$x\in  \Omega_r^{\rm reg}$.
After taking the constant \(C_r>0\) and  \(\lambda_r>0\) sufficiently
small, we get the estimate 
$
\chi_\lambda(x)\ge -\frac{C_r}{\log(1/\lambda)}$ on $\Omega_r^{\rm reg}$.
\end{proof}

\begin{example}\label{loglog_example}
Assume that
$S=\partial\Omega\cap X_{\rm sing}$
is nonempty, and set
$D:=\max_{x,y\in\overline\Omega}d_\beta(x,y)$.
Define
$$ \phi(x):=\frac{1}{\log\log\bigl(2e^eD/d_\beta(x,S)\bigr)} \quad\text{for }x\in\partial\Omega\setminus S, \qquad \phi|_S:=0 .$$
This function satisfies the assumptions
of Theorem~\ref{thm:intro-logarithmic-regularity} in the case $\sigma\ge\nu$. However, $\phi$ has no logarithmic modulus of continuity
along $S$. More precisely, for every $\gamma>0$, one has
$$
\frac{\phi(x)}
{(\log(1/d_\beta(x,S)))^{-\gamma}}
=
\frac{(\log(1/d_\beta(x,S)))^\gamma}
{\log\circ\log\bigl(2\e^{\e}D/d_\beta(x,S)\bigr)}
\longrightarrow+\infty
\qquad
\text{as } d_\beta(x,S)\to0.
$$
Theorem~\ref{thm:intro-logarithmic-regularity} still yields a local
logarithmic modulus of continuity for the corresponding solution on
$\Omega\cap X_{\rm reg}$.

\end{example}

\end{document}